\numberwithin{equation}{section}
\def\Xint#1{\mathchoice 
  {\XXint\displaystyle\textstyle{#1}}%
  {\XXint\textstyle\scriptstyle{#1}}%
  {\XXint\scriptstyle\scriptscriptstyle{#1}}%
  {\XXint\scriptscriptstyle\scriptscriptstyle{#1}}%
  \!\int} 
\def\XXint#1#2#3{{\setbox0=\hbox{$#1{#2#3}{\int}$} 
  \vcenter{\hbox{$#2#3$}}\kern-.5\wd0}} 
\def\Mint{\Xint -}
\theoremstyle{plain}
\newtheorem{theorem}{Theorem}[section]
\newtheorem{proposition}[theorem]{Proposition}
\newtheorem{lemma}[theorem]{Lemma}
  \theoremstyle{remark}
\newtheorem{remark}[theorem]{Remark}
  \theoremstyle{definition}
\newtheorem{definition}[theorem]{Definition}
\def\R{\mathbb{R}}
\begin{document}
\subjclass[2010]{35B40, 35B65, 35J62, 35Q60, 78A30}

\keywords{Born-Infeld equation, Nonlinear electromagnetism, Mean curvature operator in the Lorentz-Minkowski space, Inhomogeneous quasilinear equation}

\title[On the electrostatic Born-Infeld equation]{On the Born-Infeld equation for electrostatic fields with a superposition of point charges}

\author[D. Bonheure]{Denis Bonheure}
\address{Denis Bonheure and Francesca Colasuonno\newline\indent
D\'epartement de Math\'ematique
\newline\indent Universit\'e Libre de Bruxelles
\newline\indent Campus de la Plaine - CP214
\newline\indent boulevard du Triomphe - 1050 Bruxelles, Belgique
}
\email{denis.bonheure@ulb.ac.be}
\email{francesca.colasuonno@unibo.it}

\author[F. Colasuonno]{Francesca Colasuonno}

\author[J. F\"oldes]{Juraj F\"oldes}
\address{Juraj F\" oldes \newline\indent
Department of Mathematics 
\newline\indent University of Virginia
\newline\indent 141 Cabell Drive, Kerchof Hall
\newline\indent Charlottesville, Virginia 22904, USA
 }
\email{foldes@virginia.edu}

\date{\today}

\begin{abstract} 
In this paper, we study the static Born-Infeld equation
$$
-\mathrm{div}\left(\frac{\nabla u}{\sqrt{1-|\nabla u|^2}}\right)=\sum_{k=1}^n a_k\delta_{x_k}\quad\mbox{in }\mathbb R^N,\qquad \lim_{|x|\to\infty}u(x)=0,
$$ 
where $N\ge3$, $a_k\in\mathbb R$ for all $k=1,\dots,n$, $x_k\in\mathbb R^N$ are the positions of the point charges, possibly non symmetrically distributed, and $\delta_{x_k}$ is the Dirac delta distribution centered at $x_k$. For this problem, we give explicit quantitative sufficient conditions on $a_k$ and $x_k$ to guarantee that the minimizer of the energy functional associated to the problem solves the associated Euler-Lagrange equation. Furthermore, we provide a more rigorous proof of some previous results on the nature of the singularities of the minimizer at the points $x_k$'s depending on the sign of charges $a_k$'s. 
For every $m\in\mathbb N$, we also consider the approximated problem 
$$
-\sum_{h=1}^m\alpha_h\Delta_{2h}u=\sum_{k=1}^n a_k\delta_{x_k}\quad\mbox{in }\mathbb R^N, \qquad\lim_{|x|\to\infty}u(x)=0
$$
where the differential operator is replaced by its Taylor expansion of order $2m$, see \eqref{series}. It is known that each of these problems has a unique solution. We study the regularity of the approximating solution, the nature of its singularities, and the asymptotic behavior of the solution and of its gradient near the singularities. 
\end{abstract}

\maketitle

\section{Introduction}
The classical electrostatic Maxwell equations in the vacuum lead to the following relations for the electric field: 
\begin{equation}\label{E}
\bold E=-\nabla u, \qquad-\Delta u=\varrho,
\end{equation}
where $\varrho$ is the charge density, $u$ the electric potential, and $\bold E$ the electric field. However, in physically relevant cases when $\rho$ is only an $L^1$-function, or in the case of point charges, the 
model violates the Principle of Finiteness of the energy, see \cite{Feynman, fortunato2002born} for a counterexample. In \cite{BI}, Born and Infeld proposed a nonlinear theory of electromagnetism by modifying Maxwell's equation mimicking  Einstein's special relativity. They introduced a parameter $b\gg 1$, whose inverse is proportional to the radius of the electron, and replaced the Maxwellian Lagrangian density $\mathcal L_M:=\frac12|\bold E|^2$ by 
$$\mathcal L_{BI}:=b^2\left(1-\sqrt{1-\frac{|\bold E|^2}{b^2}}\right)\quad\mbox{for }|\bold E|\le b,$$
so that $\mathcal L_M$ is a first order approximation of $\mathcal L_{BI}$ as  $|\bold E|/b\to 0$.
In presence of a charge density $\varrho$, this new Lagragian leads, at least formally, to replace Poisson's equation in \eqref{E} by the nonlinear equation 
$$-\mathrm{div}\left(\frac{\nabla u}{\sqrt{1-|\nabla u|^2/b^2}}\right) =\varrho,$$
which agrees with the finiteness of the energy even when $\varrho$ is a point charge or an $L^1$-density.
After scaling $u/b$ and $\varrho/b$, we get 
\begin{equation}\label{BI}
-Qu:=-\mathrm{div}\left(\frac{\nabla u}{\sqrt{1-|\nabla u|^2}}\right) =\varrho.
\end{equation}  
It is interesting to notice that the nonlinear operator in \eqref{BI} has also a geometric interpretation, see \cite{BartSim, Ecker}. Indeed $Q$ is the so-called mean curvature operator in the Lorentz-Minkowski space and \eqref{BI} can be seen as the equation for hypersurfaces in Minkowski space with prescribed mean curvature $\rho$. In particular, when $\varrho$ is a superposition of point charges, \eqref{BI} is the equation for area maximizing hypersurfaces in Minkowski space having isolated singularities, cf. \cite{Ecker}. Since the density $\rho$ is not smooth, we look for weak solutions in the space
\begin{equation} \label{def:X}
\mathcal X:=\mathcal D^{1,2}(\mathbb R^N)\cap\{u\in C^{0,1}(\mathbb R^N)\,:\,\|\nabla u\|_\infty\le1\}
\end{equation}
endowed with the norm 
$$
\|u\|:=\left(\int_{\mathbb R^N}|\nabla u|^2 dx\right)^{1/2}.
$$
\begin{definition}\label{def:solution} 
A {\it weak solution} of \eqref{BI} coupled with  the decay condition $$\lim_{|x|\to\infty}u(x)=0$$ is a function $u\in \mathcal X$ such that 
$$
\int_{\mathbb R^N}\frac{\nabla u\cdot\nabla v}{\sqrt{1-|\nabla u|^2}}dx=\langle\varrho,v\rangle\quad\mbox{for all }v\in\mathcal X.
$$
\end{definition}
We recall that $\mathcal D^{1,2}(\mathbb R^N):=\overline{C_{\mathrm{c}}^\infty(\mathbb R^N)}^{\|\cdot\|}$, that is, $\mathcal D^{1,2}(\mathbb R^N)$ is the closure of the space of smooth compactly supported functions with respect to the norm $\|\cdot\|$.
Mathematically,  \eqref{BI} has a variational structure, since it can be (at least formally) seen as the Euler-Lagrange equation of the energy functional 
$I_\varrho:\mathcal X\to\mathbb R$  defined by
\begin{equation}\label{gen_funct}
I_\varrho(u):=\int_{\mathbb R^N}(1-\sqrt{1-|\nabla u|^2})dx-\langle\varrho,u\rangle\quad\mbox{for all }u\in\mathcal X.
\end{equation}
We also denote the dual space of $\mathcal X$ by $\mathcal X^*$ with respect to $L^2(\mathbb R^N)$ inner product, and  we write $\langle\cdot,\cdot\rangle$ for the dual pairing between $\mathcal X^*$ and $\mathcal X$.
It is known  that $I_\varrho$ has a unique minimizer $u_\varrho$ for all $\varrho\in \mathcal{X}^*$ (cf. \cite{BDP} and Section~\ref{Sec2}).
However, due to the lack of regularity of $I_\varrho$ on functions $u$ such that $|\nabla u(x)|=1$ for some points $x\in\R^N$, the justification that minimizers of \eqref{gen_funct} are also weak solutions of \eqref{BI}
presents many difficulties, which will be partly addressed in the present paper.
We remark that some variational problems with a gradient constraint present similar difficulties, see e.g.\cite{brezis1971equivalence,caffarelli1979free,Cellina,treu2000equivalence}. In those papers, the main idea is to remove the constraint on the gradient by defining an appropriate obstacle problem. We believe that some ideas from those papers could be useful in our context but we do not push further those ideas here.      

To address the lack of smoothness,  Bonheure et al.  \cite{BDP} used classical methods from Non-smooth Analysis and weakened the definition of critical point of $I_\varrho$, using the notion critical points in the \textit{weak sense}, see \cite{Szulkin}. Also, they proved the existence and uniqueness of a critical point of $I_\varrho$ in the weak sense, and showed that the PDE is weakly satisfied in the sense of Definition \ref{def:solution} for radially symmetric or locally bounded $\varrho$'s. 

In \cite{fortunato2002born}, Fortunato et al. studied \eqref{BI} in $\mathbb R^3$ and its second-order approximation (by taking the Taylor expansion of the Lagrangian density). In the same spirit, in \cite{Kiessling,BDP} the authors performed higher-order expansions of the Lagrangian density, so that, in the limit, the operator $Q$ can be formally seen as the  series of $2h$-Laplacians
\begin{equation}\label{sum_operator}
-Qu=-\sum_{h=1}^\infty \alpha_h\Delta_{2h}u,
\end{equation}
where we refer to Section \ref{Sec2} for the precise expression of the coefficients and $\Delta_{p}u := \textrm{div}(|\nabla u|^{p - 2}\nabla u)$. This expension allows to approximate $Q$ with the operators sum 
\begin{equation}\label{approxQ}
-\sum_{h=1}^m \alpha_h\Delta_{2h}
\end{equation}
and \eqref{BI} with the quasi-linear equations
$$
-\sum_{h=1}^m \alpha_h\Delta_{2h}\phi=\varrho\qquad	\mbox{for }m\in\mathbb N.
$$
Each of such equations, complemented with the condition $\lim_{|x|\to\infty}u(x)=0$, have a unique solution $u_m$. 
In \cite{Kiessling}, respectively \cite{BDP}, it is further proved that the approximating solutions $u_m$'s weakly converge to the minimizer $u_\varrho$ of \eqref{BI} when $\rho$ is a superposition of point charges, respectively for any $\varrho\in\mathcal X^*$. 

It is worth noting that $\mathcal X^*$ contains Radon measures and in particular superpositions of point charges and $L^1$-densities, which are in turn dense in the space of 
Radon measures.
Due to these reasons we will assume that $\rho$ is  a finite superposition of charges without any symmetry conditions, that is, we consider
\begin{equation}\label{P}
\begin{cases}\displaystyle{-\mathrm{div}\left(\frac{\nabla u}{\sqrt{1-|\nabla u|^2}}\right)}=\sum_{k=1}^n a_k\delta_{x_k} \quad \mbox{in } \mathbb R^N,\\
\displaystyle{\lim_{|x|\to\infty}u(x)}=0,
\end{cases}
\end{equation}
where $N\ge3$, $\delta_{x_k}$ is the Dirac delta function centered at $x_k$, $a_k\in\mathbb R$ and $x_k\in\mathbb R^N$ for $k=1,\dots,n$. This situation is 
general enough to cover most of the phenomena, yet simple enough that it can be analyzed explicitly.  
The energy functional associated to \eqref{P} has the form
\begin{equation}\label{I-superposition}
I(u)=\int_{\mathbb R^N}(1-\sqrt{1-|\nabla u|^2})dx-\sum_{k=1}^n a_k u(x_k)\quad\mbox{for all }u\in \mathcal X.
\end{equation}
Problem \eqref{P} has been first studied in \cite{Kiessling,BDP}, see also Section \ref{Sec2} below, where we report some recalls. 

Our first goal is to provide a rigorous proof concerning the nature of the singularities $x_k$'s for the minimizer $u_\varrho$ of $I$, depending on the sign of the charges $a_k$'s, see \cite{Kiessling} and Theorem \ref{maxmin} below. 
More precisely, in Theorem \ref{maxmin}, we show that if the charge $a_k$ is positive (resp. negative) then the point charge $x_k$ is a relative strict maximizer (resp. minimizer) for $u_\varrho$. Our proof uses 
geometric results proved by Ecker \cite{Ecker} and the comparison principle in bounded domains proved in Lemma \ref{comparison}. This result is far from obvious, since $u_\varrho$ is globally bounded 
and in particular it does not diverge at $x_k$, rather $\nabla u_\varrho$ is discontinuous at the location of the charges. Of course since the problem is not linear it cannot be decomposed into 
several problems, each with just one point charge. However, this is not the only obstacle, if one replaces our curvature operator with Laplacian in one dimension, then the Green's function for the charge
located at $x_k$ has the form $|x - x_k|$
and in particular it is bounded in the neighborhood of $x_k$. But, adding several Green's functions one obtains that the solution is a piece-wise linear function, which might not have local extrema at $x_k$. 
Although the singularity is of the same nature as one for Laplacian in one dimension, it is crucial that the solution vanishes at infinity, which introduces a non-local argument into the proofs.

We immediately show an application of these results in the question
whether the minimizer $u_\varrho$ of \eqref{I-superposition} is a weak solution of \eqref{P}. To our best knowledge, this problem hasn't been completely solved yet. Some results in this direction can be found in \cite{Kiessling}, but the main arguments in that paper need to be adjusted (see the discussion in \cite[Section 4]{BDP}). To our knowledge, the case of a generic $\varrho$ is still open.  
In \cite{Kiessling,BDP}, the authors proved that $u_\varrho$ solves the equation in \eqref{P}, in  $\mathbb R^N\setminus\Gamma$, where $\Gamma:=\bigcup_{k\neq j}\overline{x_k x_j}$ and $\overline{x_k x_j}$ denotes the line segment with endpoints $x_k$ and $x_j$. Furthermore, it is proved in \cite{BDP} that if the charges are sufficiently small or far apart, $u_\varrho$ solves the equation in $\mathbb R^N\setminus\{x_1,\dots,x_n\}$. In particular, in \cite{Kiessling} it is showed that if two point charges $x_k$, $x_j$ have the same sign $a_k\cdot a_j>0$, then $u_\varrho$ solves the equation also along the open line segment $\mathrm{Int}(\overline{x_k x_j})$.  

The arguments on the literature are based on the fact that, if the minimizer does not satisfy the equation along the segment connecting $x_k$ and $x_j$, then it must be affine and since the minimizer is bounded, 
then one obtains a contradiction. However, the argument is purely qualitative and it does not yield an easily verifiable condition based only on the location and strength of the charges. 
In this paper, we partly bridge this gap by proving a sufficient {\it quantitative} condition on the charges and on their mutual distance to guarantee that the minimizer $u_\varrho$ solves \eqref{P} also along the line segments joining two charges of different sign. Let us denote $\mathcal K_+ := \{k : a_k > 0\}$ and $\mathcal K_- := \{k : a_k < 0\}$, that is, set of indexes for positive respectively negative charges.   Our result reads then as follows. 

\begin{theorem}\label{prop>2charges} 
If 
\begin{equation}\label{hpjl}
\left(\frac{N}{\omega_{N-1}}\right)^{\frac{1}{N-1}}\frac{N-1}{N-2}\left[\left(\sum_{k\in \mathcal K_+}a_k\right)^{\frac{1}{N-1}}+\left(\sum_{k\in \mathcal K_-}|a_k|\right)^{\frac{1}{N-1}}\right] <\min_{\underset{j\neq l}{j,\,l\in\{1,\dots,n\}}} |x_j-x_l|,
\end{equation}
where $\omega_{N-1}$ is the measure of the unit sphere in $\mathbb R^N$, then $$u_\varrho\in C^\infty(\mathbb R^N\setminus\{x_1,\dots, x_n\})\cap C(\mathbb R^N)$$ and it is a 
classical solution of \eqref{general} in $\mathbb R^N\setminus\{x_1,\dots, x_n\}$, with $|\nabla u_\varrho|<1$. 
\end{theorem}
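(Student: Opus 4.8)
The plan is to show that, under \eqref{hpjl}, $u_\varrho$ cannot develop the only singularity admissible off the set of point charges, namely a light-like segment of its graph over one of the segments $\overline{x_jx_l}$. First I would recall, from the results gathered in Section~\ref{Sec2} (originally due to \cite{Kiessling, BDP}), that $u_\varrho\in\mathcal X\subset C^{0,1}(\mathbb R^N)$ — in particular $u_\varrho\in C(\mathbb R^N)$ — and that $u_\varrho$ is a smooth, strictly spacelike ($|\nabla u_\varrho|<1$) classical solution of \eqref{P} on $\mathbb R^N\setminus\Gamma$, where $\Gamma=\bigcup_{k\ne j}\overline{x_kx_j}$. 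Combining this with the structure theory of Ecker~\cite{Ecker} for area-maximizing hypersurfaces in the Lorentz--Minkowski space with isolated singularities, one sees that the only way $u_\varrho$ can fail to be a smooth spacelike solution at a point of the relatively open segment $\mathrm{Int}(\overline{x_jx_l})$ is that the graph of $u_\varrho$ over $\overline{x_jx_l}$ is a light-like segment: a light-like line in the graph cannot be extended indefinitely (as $u_\varrho$ is bounded) nor terminate on $\mathbb R^N\setminus\Gamma$ (where $u_\varrho$ is smooth and spacelike), so it must run exactly along a segment joining two of the charges, say $x_p$ and $x_q$. Hence $u_\varrho$ restricted to $\overline{x_px_q}$ is affine with unit gradient, and in particular $|u_\varrho(x_p)-u_\varrho(x_q)|=|x_p-x_q|$. (Using Theorem~\ref{maxmin} one moreover checks that this can only occur when the two charges have opposite signs — if, say, $a_q>0$ then $x_q$ would be a strict local maximizer of $u_\varrho$, incompatible with $u_\varrho$ being affine of unit gradient through $x_q$, and similarly $a_p<0$ is excluded — recovering in passing the known fact that $u_\varrho$ solves the equation along same-sign segments; this sign analysis is not needed below.) It therefore suffices to prove that \eqref{hpjl} implies
\[
|u_\varrho(x_j)-u_\varrho(x_l)|<|x_j-x_l|\qquad\text{for every }j\ne l.
\]

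To this end I would bound the oscillation of $u_\varrho$. Write $\varrho=\varrho_+-\varrho_-$ with $\varrho_\pm:=\sum_{k\in\mathcal K_\pm}|a_k|\,\delta_{x_k}\ge0$, of total masses $A_+:=\sum_{k\in\mathcal K_+}a_k$ and $A_-:=\sum_{k\in\mathcal K_-}|a_k|$. Since $-\varrho_-\le\varrho\le\varrho_+$, the monotonicity of the minimizer with respect to the datum — which follows from the comparison principle in bounded domains (Lemma~\ref{comparison}) by exhausting $\mathbb R^N$ with balls, together with uniqueness and the elementary symmetry $u_{-\varrho}=-u_\varrho$ — gives $-\sup u_{\varrho_-}\le u_\varrho\le\sup u_{\varrho_+}$, so that $\mathrm{osc}\,u_\varrho\le\sup u_{\varrho_+}+\sup u_{\varrho_-}$. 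It remains to prove the a priori estimate
\[
\sup u_{\varrho_+}\le\Big(\tfrac{N}{\omega_{N-1}}\Big)^{\frac1{N-1}}\tfrac{N-1}{N-2}\,A_+^{\frac1{N-1}}
\]
(and, symmetrically, for $\varrho_-$). I would obtain it by comparing $u_{\varrho_+}$ with an explicit radially symmetric barrier modelled on the single-charge fundamental solution of $Q$: by uniqueness $u_{\varrho_+}\ge0$, and, by the maximum principle together with the dichotomy recalled above, $\sup u_{\varrho_+}$ is attained at one of the charges, say $x_j$; since $|\nabla u_{\varrho_+}|\le1$ one has $u_{\varrho_+}\ge\sup u_{\varrho_+}-|\cdot-x_j|$ near $x_j$, and the flux identity
\[
\int_{\{u_{\varrho_+}=t\}}\frac{|\nabla u_{\varrho_+}|}{\sqrt{1-|\nabla u_{\varrho_+}|^2}}\,d\mathcal H^{N-1}=\varrho_+\big(\{u_{\varrho_+}>t\}\big)\le A_+,
\]
valid for a.e.\ regular value $t$ (it follows by integrating $-Qu_{\varrho_+}=\varrho_+$ over the superlevel set), controls the radial decay of $u_{\varrho_+}$ away from $x_j$; combining it with the isoperimetric inequality and the elementary estimate $\int_0^\infty\min\{1,\,A_+/(\omega_{N-1}r^{N-1})\}\,dr=\frac{N-1}{N-2}(A_+/\omega_{N-1})^{1/(N-1)}$ yields the displayed bound, the volume-normalising factor $N/\omega_{N-1}=|B_1|^{-1}$ entering through the isoperimetric inequality. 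Equivalently, one may exhibit an explicit concave radial supersolution $\Psi$ of $-Q\Psi\ge\varrho_+$ on $\mathbb R^N$, with $\Psi\to0$ at infinity and $\|\Psi\|_\infty$ equal to the right-hand side above, and conclude by comparison.

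Putting the two steps together, for every $j\ne l$,
\[
|u_\varrho(x_j)-u_\varrho(x_l)|\le\mathrm{osc}\,u_\varrho\le\Big(\tfrac{N}{\omega_{N-1}}\Big)^{\frac1{N-1}}\tfrac{N-1}{N-2}\big(A_+^{\frac1{N-1}}+A_-^{\frac1{N-1}}\big)<\min_{j\ne l}|x_j-x_l|\le|x_j-x_l|
\]
by \eqref{hpjl}, which contradicts the dichotomy of the first step. Hence the graph of $u_\varrho$ contains no light-like segment, so $u_\varrho$ is a classical, strictly spacelike solution of \eqref{P} on $\mathbb R^N\setminus\{x_1,\dots,x_n\}$; since there $Q$ is smooth and, on compact subsets, uniformly elliptic (because $|\nabla u_\varrho|<1$ locally), a standard Schauder bootstrap gives $u_\varrho\in C^\infty(\mathbb R^N\setminus\{x_1,\dots,x_n\})$, while $u_\varrho\in C(\mathbb R^N)$ was already noted.

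The main obstacle is the a priori oscillation bound: it is precisely here that the qualitative ``$u_\varrho$ would be affine, hence bounded, contradiction'' mechanism of the literature must be made quantitative, and the nonlinearity of $Q$ prevents one from simply superposing single-charge fundamental solutions; designing a barrier that both respects the constraint $|\nabla\cdot|<1$ and has $L^\infty$-norm controlled by the \emph{total} positive charge (rather than by $\sum_k|a_k|^{1/(N-1)}$) is the delicate point. The structural input from \cite{Ecker} in the first step is the other place requiring care, though it amounts to a citation combined with the observation that the singular set of $u_\varrho$ is contained in $\Gamma$.
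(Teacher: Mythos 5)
Your reduction of the theorem to the oscillation bound $|u_\varrho(x_j)-u_\varrho(x_l)|<|x_j-x_l|$ is sound and is essentially the paper's own first step (Theorem \ref{Le4.1}(iii) plus Theorem \ref{maxmin} rule out the affine/light-like alternative once that strict inequality holds), and sandwiching $u_\varrho$ between the one-sign minimizers $u_{\varrho_\pm}$ is also what the paper does. The genuine gap is in the way you propose to bound $\sup|u_{\varrho_\pm}|$. Your flux identity
$$\int_{\{u_{\varrho_+}=t\}}\frac{|\nabla u_{\varrho_+}|}{\sqrt{1-|\nabla u_{\varrho_+}|^2}}\,d\mathcal H^{N-1}=\varrho_+\big(\{u_{\varrho_+}>t\}\big)$$
presupposes that $u_{\varrho_+}$ satisfies $-Qu=\varrho_+$ in the distributional sense \emph{across the point charges}, with exactly the masses $a_k$. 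Within the framework of this paper that is not available: Theorems \ref{Le4.1} and \ref{corKiess} only give the homogeneous equation (classically) on $\mathbb R^N\setminus\{x_k\}$, and whether the minimizer verifies the weak formulation at the charges is precisely the delicate issue the paper is circumventing (because of the constraint $\|\nabla u\|_\infty\le1$, minimality only yields a Szulkin-type variational inequality, so one cannot integrate by parts to identify, or even bound by $a_k$, the flux of $\nabla u/\sqrt{1-|\nabla u|^2}$ through small spheres around $x_k$). The barrier variant you offer as an alternative has the same defect in a different guise: a single radial supersolution $\Psi$ cannot satisfy $-Q\Psi\ge\varrho_+$ when $\varrho_+$ is a sum of Dirac masses at distinct points, and the comparison tool actually available (Lemma \ref{comparison}) compares \emph{minimizers with ordered data}, which brings back the same identification problem. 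So the crucial quantitative step is not justified; if the flux identity were granted, your Talenti-type chain (coarea, Cauchy--Schwarz on level sets, isoperimetric inequality, plus $|\nabla u|\le1$) would indeed close, and in fact with the better constant $\tfrac{N-1}{N-2}(A_\pm/\omega_{N-1})^{1/(N-1)}$, but that input is exactly what is missing.

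The paper's proof avoids the PDE near the charges altogether: it uses only the minimality, namely $I_\pm(u_\pm)<0$ (Proposition \ref{Prop2.3}) together with $\tfrac12 t\le 1-\sqrt{1-t}$ and the sharp inhomogeneous inequality $\|\nabla u\|_{L^2(\mathbb R^N)}^2\ge\bar C\|u\|_{L^\infty(\mathbb R^N)}^N$ of Lemmas \ref{firstlemma} and \ref{secondlemma}, which yields $\|u_\pm\|_{L^\infty(\mathbb R^N)}\le\big(\tfrac2{\bar C}\sum_{k\in\mathcal K_\pm}|a_k|\big)^{1/(N-1)}$; the explicit value \eqref{bestconstant} of $\bar C$ produces exactly the constant in \eqref{hpjl}. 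The sandwich $u_-\le u_\varrho\le u_+$ is then taken from the whole-space comparison principle of \cite{BDP} rather than by exhaustion with Lemma \ref{comparison}. To repair your argument you would either have to prove the distributional identity at the charges for $u_{\varrho_\pm}$ (a nontrivial result of the same flavor as the theorem itself), or replace your level-set step by an energy-based estimate such as the one above.
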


Note that the occurrence of the sum of positive and negative charges is natural, since we cannot rule out the situation when these charges are close to each other and they appear as one point
charge. The explicit form of the constant on the left-hand side of \eqref{hpjl} is crucial and observe that is bounded from below independently of $N$ and the number of charges. This allows for passing to 
the limit in the number of charges, the formulation of the result is left to the interested reader. 
We also give in Remark \ref{3.8} a more precise way (although less explicit) how to calculate the constant on the left-hand side of \eqref{hpjl} in the general case, and yet more optimal one if there are only 
{\it two} point charges of different sign in Proposition~\ref{2points}. 

The proof of this theorem is based both on a new version of comparison principle (Lemma~\ref{comparison}) and on the explicit expression of the best constant $\bar C$ for the inequality 
$$
\|\nabla u\|^2_{L^2(\mathbb R^N)}\ge \bar C \|u\|^N_{L^\infty(\mathbb R^N)}\quad\mbox{for all }u\in\mathcal X,
$$
proved in Lemmas \ref{firstlemma} and \ref{secondlemma}, which might be of independent interest. Note that this result has a different flavor than the results for optimal constants for the 
embeddings since our  inequality is inhomogeneous and we have to crucially use that the Lipschitz constant of $u$ is bounded by one.

In Section \ref{Sec3}, we first turn our attention to the approximating problems
\begin{equation}\label{Pa}
\begin{cases}\displaystyle{-\sum_{h=1}^m\alpha_h\Delta_{2h}u=\sum_{k=1}^n a_k\delta_{x_k}} \quad \mbox{in } \mathbb R^N,\\
\displaystyle{\lim_{|x|\to\infty}u(x)}=0
\end{cases}
\end{equation}
for $m\ge1$ and study the regularity of the solution $u_m$: by combining results of Lieberman \cite{Lieberman}, a linearization, and a bootstrap argument, we prove that 
the solutions are regular away from the points $x_k$'s. 

\begin{proposition}\label{reg} Let $2m>\max\{N,2^*\}$, $2^*:=2N/(N-2)$, and $u_m$ be the solution of \eqref{Pa}. Then $u_m\in C^{0,\beta_m}_0(\mathbb R^N)\cap C^\infty(\mathbb R^N\setminus\{x_1,\dots,x_n\})$, where 
$$C_0^{0,\beta_m}(\mathbb R^N):=\left\{u\in C^{0,\beta_m}(\mathbb R^N)\,:\,\lim_{|x|\to\infty}u(x)=0\right\},$$
with $\beta_m:=1-\frac{N}{2m}$. 
\end{proposition}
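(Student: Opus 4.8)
The plan is to establish the two assertions separately. The global regularity $u_m\in C^{0,\beta_m}_0(\mathbb R^N)$ will follow from the variational characterization of $u_m$ together with Morrey's embedding, whereas the interior smoothness $u_m\in C^\infty(\mathbb R^N\setminus\{x_1,\dots,x_n\})$ will be obtained by first upgrading $u_m$ to a $C^{1,\alpha}$ solution off the charges, through Lieberman's gradient-regularity theory, and then bootstrapping via a linearization.

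For the first assertion I would recall (cf.\ Section~\ref{Sec2}) that $u_m$ is the unique minimizer over $\mathcal D^{1,2}(\mathbb R^N)\cap\mathcal D^{1,2m}(\mathbb R^N)$ of the strictly convex functional
$$\mathcal J_m(u):=\int_{\mathbb R^N}\sum_{h=1}^m\frac{\alpha_h}{2h}|\nabla u|^{2h}\,dx-\sum_{k=1}^n a_k u(x_k),$$
the terms with $1<h<m$ being finite since $\|\nabla u\|_{L^{2h}}$ interpolates between $\|\nabla u\|_{L^2}$ and $\|\nabla u\|_{L^{2m}}$. The assumption $2m>\max\{N,2^*\}$ enters here: since $2m>N$, Morrey's inequality gives $\mathcal D^{1,2m}(\mathbb R^N)\hookrightarrow C^{0,\beta_m}(\mathbb R^N)$ with $\beta_m=1-N/(2m)$, which makes the point evaluations $u\mapsto u(x_k)$ continuous on the energy space, so that $\mathcal J_m$ is well defined and coercive, and also forces $u_m\in C^{0,\beta_m}(\mathbb R^N)$; moreover $u_m\in\mathcal D^{1,2}(\mathbb R^N)\hookrightarrow L^{2^*}(\mathbb R^N)$, and a uniformly continuous $L^{2^*}$-function vanishes at infinity. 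Hence $u_m\in C^{0,\beta_m}_0(\mathbb R^N)$.

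For the smoothness I would write $A(\xi):=\sum_{h=1}^m\alpha_h|\xi|^{2h-2}\xi$, so that the PDE in \eqref{Pa} reads $\mathrm{div}\,A(\nabla u)=-\sum_k a_k\delta_{x_k}$. Because $\alpha_1=1$ and $\alpha_h>0$ for every $h$, the field $A$ is smooth on all of $\mathbb R^N$ and satisfies $A(\xi)\cdot\xi\ge|\xi|^2$, $DA(\xi)\lambda\cdot\lambda\ge|\lambda|^2$ and $|DA(\xi)|\le C(1+|\xi|^{2m-2})$ for all $\xi,\lambda\in\mathbb R^N$; in other words $\mathrm{div}\,A$ is non-degenerate and non-singular, with growth at infinity of $2m$-Laplacian type (the associated Orlicz function $G(t)=\sum_h\frac{\alpha_h}{2h}t^{2h}$ satisfies $tG''(t)/G'(t)\in[1,2m-1]$), hence it falls within the generalized structure conditions of \cite{Lieberman}. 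On any ball $B\subset\subset\mathbb R^N\setminus\{x_1,\dots,x_n\}$ the right-hand side of \eqref{Pa} vanishes and $u_m\in W^{1,2m}(B)\cap L^\infty(B)$ is a bounded weak solution of $\mathrm{div}\,A(\nabla u_m)=0$, so \cite{Lieberman} provides $\alpha_0=\alpha_0(N,m)\in(0,1)$ with $u_m\in C^{1,\alpha_0}_{\mathrm{loc}}(\mathbb R^N\setminus\{x_1,\dots,x_n\})$; in particular $\nabla u_m$ is locally bounded there.

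It then remains to bootstrap. Fix $B\subset\subset\mathbb R^N\setminus\{x_1,\dots,x_n\}$ and set $M:=\|\nabla u_m\|_{L^\infty(B)}$. By the difference-quotient method each $w:=\partial_i u_m$ belongs to $W^{1,2}_{\mathrm{loc}}(B)$ and is a weak solution of the linear equation $\mathrm{div}\bigl(a(x)\nabla w\bigr)=0$ with $a(x):=DA(\nabla u_m(x))$, a matrix that is uniformly elliptic on $B$ with eigenvalues in $[1,C(1+M^{2m-2})]$ and that lies in $C^{0,\alpha_0}(B;\mathbb R^{N\times N})$ since $\nabla u_m\in C^{0,\alpha_0}(B)$ and $A$ is smooth. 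Interior Schauder estimates then give $w\in C^{1,\alpha_0}_{\mathrm{loc}}(B)$, i.e.\ $u_m\in C^{2,\alpha_0}_{\mathrm{loc}}(B)$; consequently $a\in C^{1,\alpha_0}_{\mathrm{loc}}(B)$, Schauder again yields $u_m\in C^{3,\alpha_0}_{\mathrm{loc}}(B)$, and iterating produces $u_m\in C^k_{\mathrm{loc}}(B)$ for all $k$, so $u_m\in C^\infty(\mathbb R^N\setminus\{x_1,\dots,x_n\})$. I expect the genuinely delicate step to be the appeal to \cite{Lieberman}: one must check carefully that the non-standard growth of $\mathrm{div}\,A$ (uniformly elliptic near $\xi=0$, $2m$-Laplacian-like as $|\xi|\to\infty$) is indeed covered by its hypotheses, so that a merely bounded weak solution is automatically of class $C^{1,\alpha_0}$; once this local gradient bound is in hand, the remaining steps are classical linear elliptic regularity.
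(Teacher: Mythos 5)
Your proposal is correct and takes essentially the same route as the paper: the embedding of the energy space $\mathcal X_{2m}$ into $C^{0,\beta_m}_0(\mathbb R^N)$ for the global statement (which you rederive via the Morrey estimate for $2m>N$ together with the $L^{2^*}$-integrability to get decay), Lieberman's gradient regularity \cite{Lieberman} on balls away from the charges (with the same structure constants, the non-degeneracy coming from the Laplacian term $\alpha_1=1$), and then a classical linear bootstrap. The only difference is cosmetic and lies in the bootstrap implementation: the paper freezes the scalar coefficient $\sum_{h}\alpha_h|\nabla u_m|^{2h-2}$ and uses existence/uniqueness for the resulting linear Dirichlet problem (Theorems 6.13 and 6.17 of \cite{GT}), whereas you differentiate the equation via difference quotients and apply divergence-form Schauder estimates to $\partial_i u_m$ -- both are standard and equally valid.
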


In comparison to the full problem \eqref{P}, there is an important difference -- we do not have a priori an estimate on $|\nabla u|$, and therefore the H\" older estimate is not immediate. 
Note that $\beta_m$ converges to $1$ as $m \to \infty$, in agreement with the fact that the solutions of \eqref{Pa} approximate solutions of \eqref{P}. 
On the other hand the operator  in \eqref{Pa} is well defined for any sufficiently smooth function $u$ and the smoothness of solutions can be expected away from $x_k$'s.

We stress that in the proof of Proposition \ref{reg}, we heavily use the fact that in the sum operator \eqref{approxQ} appears also the Laplacian, see Remark \ref{roleoflapl} for further details. 
Moreover, we also prove that $u_m$ and $\nabla u_m$ behave as the fundamental solution (and its gradient) of the $2m$-Laplacian near the singularities $x_k$'s. Intuitively,  we could say that the Laplacian, $\Delta_2$, 
is responsible for the regularity of the approximating solution $u_m$ and the behavior at infinity, while the $2m$-Laplacian (the last one) dictates the local behavior of the solution $u_m$ near the singularities $x_k$'s, in the following sense. 

\begin{theorem}\label{approx_nabla}
Let $2m>\max\{N,2^*\}$ and $k=1,\dots,n$. Then
\begin{equation}\label{u-growth}
\lim_{x\to x_k}\frac{u_m(x)-u_m(x_k)}{|x-x_k|^{\frac{2m-N}{2m-1}}}=K_m
\end{equation}
for some $K_m=K_m(a_k,\alpha_m,N)\in\mathbb R$ such that $K_m\cdot a_k<0$, and
\begin{equation}\label{grad-growth}
\lim_{x\to x_k}\frac{|\nabla u_m (x)|}{|x-x_k|^{\frac{1-N}{2m-1}}}=K_m',
\end{equation}
with $K_m':=\frac{2m-N}{2m-1}|K_m|$.
In particular, $x_k$ is a relative strict maximizer (resp. minimizer) of $u_m$ if $a_k>0$ (resp. $a_k<0$). 
\end{theorem}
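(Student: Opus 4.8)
\medskip
\noindent\emph{Proof plan.} Set $\theta:=\frac{2m-N}{2m-1}\in(0,1)$, the exponent governing the fundamental solution of the $2m$-Laplacian, so that \eqref{u-growth}--\eqref{grad-growth} read $u_m(x)-u_m(x_k)=K_m|x-x_k|^{\theta}(1+o(1))$ and $|\nabla u_m(x)|=K_m'|x-x_k|^{\theta-1}(1+o(1))$ as $x\to x_k$. Fix $k$ and $R>0$ so small that $\overline{B_R(x_k)}$ contains no other $x_j$; then $u_m$ solves $-\sum_{h=1}^m\alpha_h\Delta_{2h}u_m=a_k\delta_{x_k}$ in $B_R(x_k)$ and, by Proposition~\ref{reg}, it is smooth and solves the homogeneous equation in $B_R(x_k)\setminus\{x_k\}$. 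The underlying idea is that $|\nabla u_m|$ must blow up at $x_k$ (otherwise $\delta_{x_k}$ would lie in the dual of $W^{1,1}$ near $x_k$, impossible for $N\ge2$), so that after rescaling only the top operator $\alpha_m\Delta_{2m}$ survives and $u_m$ behaves like the radial fundamental solution $u_m(x_k)+K_m|x-x_k|^{\theta}$ of $-\alpha_m\Delta_{2m}(\cdot)=a_k\delta_{x_k}$.

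The first step is an a priori upper bound $|u_m(x)-u_m(x_k)|\le C\,|x-x_k|^{\theta}$ for $0<|x-x_k|<R/2$. This can be obtained from the pointwise potential estimates of Kilpel\"ainen--Mal\'y for quasilinear equations with measure data (the Wolff potential of $a_k\delta_{x_k}$ has exactly this size), or by a direct comparison on small annuli with radial super- and subsolutions of the full operator of the form $\mathrm{const}+b|x-x_k|^{\theta}\mp\eta|x-x_k|^{\sigma}$: here $|x|^{\theta}$ is $2m$-harmonic in $\mathbb R^N\setminus\{0\}$, the operators $\Delta_{2h}$ with $h<m$ contribute errors of order $|x-x_k|^{(\theta-1)(2h-1)-1}$, and the correction term with a suitable $\sigma>\theta$ dominates them; one then applies the comparison principle for the monotone operator $-\sum_h\alpha_h\Delta_{2h}$.

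The second step is a blow-up. For $0<\lambda<R/2$ set $v_\lambda(y):=\lambda^{-\theta}\big(u_m(x_k+\lambda y)-u_m(x_k)\big)$, $|y|<\frac{R}{2\lambda}$. Since $1-\theta=\frac{N-1}{2m-1}$, a change of variables shows that $v_\lambda$ solves
$$
-\alpha_m\Delta_{2m}v_\lambda-\sum_{h=1}^{m-1}\lambda^{(N-1)\frac{2m-2h}{2m-1}}\,\alpha_h\Delta_{2h}v_\lambda=a_k\delta_0\qquad\text{in }B_{R/(2\lambda)},
$$
so every lower-order term carries a positive power of $\lambda$. From the a priori bound, $|v_\lambda(y)|\le C|y|^{\theta}$, and Lieberman's interior gradient estimates \cite{Lieberman} (applied away from the origin and rescaled) give $|\nabla v_\lambda(y)|\le C|y|^{\theta-1}$; hence $\{v_\lambda\}$ is precompact in $C^0_{\mathrm{loc}}(\mathbb R^N)\cap C^1_{\mathrm{loc}}(\mathbb R^N\setminus\{0\})$ and the flux term $|\nabla v_\lambda|^{2m-2}\nabla v_\lambda$ converges in $L^1_{\mathrm{loc}}(\mathbb R^N)$ along subsequences. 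Thus any limit $v$ as $\lambda_j\to0$ solves $-\alpha_m\Delta_{2m}v=a_k\delta_0$ in $\mathbb R^N$ with $v(0)=0$, $|v(y)|\le C|y|^{\theta}$, and $v\not\equiv0$. A Liouville-type uniqueness for the $2m$-Laplacian with an isolated point mass and growth $o(|y|)$ then forces $v(y)=K_m|y|^{\theta}$, the radial profile obtained by integrating the flux identity $\alpha_m\,\omega_{N-1}\,r^{N-1}|v'|^{2m-2}v'=-a_k$; using $\mathrm{div}(x/|x|^N)=\omega_{N-1}\delta_0$ one computes
$$
K_m=-\operatorname{sgn}(a_k)\,\frac1\theta\left(\frac{|a_k|}{\alpha_m\,\omega_{N-1}}\right)^{\frac1{2m-1}},
$$
so $K_m\cdot a_k<0$. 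As the limit is independent of the subsequence, $v_\lambda\to K_m|\cdot|^{\theta}$ along the full family, locally uniformly and in $C^1_{\mathrm{loc}}(\mathbb R^N\setminus\{0\})$. Evaluating at $|y|=1$ gives \eqref{u-growth}; and $\nabla v_\lambda(y)=\lambda^{\frac{N-1}{2m-1}}\nabla u_m(x_k+\lambda y)\to K_m\theta\,y$ on $|y|=1$, which, rewritten with $\lambda=|x-x_k|$, gives \eqref{grad-growth} with $K_m'=\theta|K_m|=\frac{2m-N}{2m-1}|K_m|$. Finally, \eqref{u-growth} and $K_m\cdot a_k<0$ show that near $x_k$ the difference $u_m(x)-u_m(x_k)$ is nonzero with the sign of $-a_k$, i.e. $x_k$ is a relative strict maximizer of $u_m$ if $a_k>0$ and a relative strict minimizer if $a_k<0$.

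I expect two steps to be the main obstacle. First, the a priori bound of the sharp order $|x-x_k|^{\theta}$ for the \emph{full} operator $-\sum_h\alpha_h\Delta_{2h}$: the singular contributions of the lower-order operators $\Delta_{2h}$, $h<m$, must be absorbed by the barriers without spoiling the leading power (and one must iterate to fix the sign as well). Second, the uniqueness of the blow-up profile, i.e. that any solution of $-\alpha_m\Delta_{2m}v=a_k\delta_0$ in $\mathbb R^N$ which vanishes at the origin and grows like $o(|y|)$ coincides with $K_m|y|^{\theta}$; this is in the spirit of Serrin's results on isolated singularities of quasilinear equations. Granting these, the remaining ingredients — compactness of the rescalings, passage to the limit in the measure-data equation, and the upgrade to $C^1_{\mathrm{loc}}$ convergence via Lieberman's estimates — are routine.
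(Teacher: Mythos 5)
Your proposal follows the same blow-up strategy as the paper (identical rescaling exponent, the same limiting equation $-\alpha_m\Delta_{2m}v=a_k\delta_0$, and the same constant: your $K_m$ coincides with the paper's $\gamma\kappa_m$ since $N|B_1|=\omega_{N-1}$), but the step on which everything hinges is left as a genuine gap. Statement \eqref{u-growth} is exactly the assertion that $v_\lambda\to K_m$ uniformly on the unit sphere along the \emph{full} family $\lambda\to0$; by your compactness argument this requires that every subsequential blow-up limit $v$ equals $K_m|y|^{\theta}$ \emph{at unit scale}. The Serrin/Kichenassamy--V\'eron isolated-singularity theory you invoke only describes the behavior of $v$ as $y\to0$ and gives no information about $v$ on $|y|=1$; and since the operator is nonlinear you cannot subtract the radial profile and appeal to a Liouville theorem for entire $2m$-harmonic functions of sublinear growth. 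So the ``Liouville-type uniqueness'' for $-\alpha_m\Delta_{2m}v=a_k\delta_0$ in $\mathbb R^N$ with $v(0)=0$ and $o(|y|)$ growth is not an off-the-shelf consequence of \cite{serrin1965singularities}, and you do not prove it; as written, your argument only yields subsequential limits and cannot conclude \eqref{u-growth}--\eqref{grad-growth}. The paper's proof is organized precisely so as not to need this global rigidity: it keeps a single subsequential limit $\bar u$ on a ball, applies the isotropy result \cite[Remark 1.6]{Veron} to $\bar u$ near its singularity only, and then transfers the asymptotic constants back to $u_{m,k}$ through the scaling identity $|\nabla u_\varepsilon(x)|\,|x|^{\frac{N-1}{2m-1}}=|\nabla u_{m,k}(\varepsilon x)|\,|\varepsilon x|^{\frac{N-1}{2m-1}}$ in its Step 6, never using the value of $\bar u$ at unit distance from the singularity.

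The remaining discrepancies are of a more technical nature but you should be aware of them. For the a priori bound of order $|x-x_k|^{\theta}$ and the gradient bound, you propose Kilpel\"ainen--Mal\'y/Wolff potentials or barriers plus rescaled Lieberman estimates; note that the operator has non-standard (Orlicz-type) growth, so you need the versions of these results valid for a general $g$ with $1\le g'(t)t/g(t)\le 2m-1$, and in your rescaled equations the coefficient of the Laplacian degenerates as $\lambda\to0$, so the interior gradient estimate must have constants depending only on that structure ratio (the form of \cite[Lemma 1]{Lieberman} used in Proposition~\ref{reg} requires $F\ge\varepsilon>0$ and is not uniform in $\lambda$). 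The paper short-circuits both issues with one citation: Baroni's Riesz-potential estimate \cite[Theorem 1.2]{Baroni}, applied to the \emph{unscaled} $u_{m,k}$ for the full operator, gives $|\nabla u_{m,k}(x)|\le C|x|^{\frac{1-N}{2m-1}}$ directly; the bound on $u_{m,k}$ then follows by integration along rays, and equicontinuity of the rescaled gradients comes from the H\"older bound of Proposition~\ref{reg}. These points are fixable along the lines you sketch, but the identification of the blow-up profile discussed above is a substantive missing ingredient, not a routine one.
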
 

The same reasons as above make this result non-trivial. The operator is non-linear, thus it is not obvious that the local behavior does not depend on the location of all charges as it for example does for 
the Laplacian in one dimension. The asymptotic behavior is a fine interplay between lowest and highest order differential operators in the expansion. 

The proof of this theorem is rather technical and relies on a blow-up argument, combined with Riesz potential estimates \cite{Baroni}. Such a usage of blow-up method is quite unusual since the solution
is bounded at the blow-up point and we need to rescale the problem in such a way that we keep the boundedness of solution, but remove the lower order terms. 

The fact that the growth rate of $u_m$ near the singularity $x_k$ is of the type $|x-x_k|^{\frac{2m-N}{2m-1}}$, with exponent that goes to 1 as $m$ goes to infinity, shows that the singularities $x_k$'s of $u_m$ approach cone-like singularities for $m$ large, which is coherent with the results found for $u_\varrho$. In particular, we note that the blow up rate \eqref{grad-growth} of $|\nabla u_m|$ near the singularities and the fact that $\lim_{m\to\infty} K_m'=1$ (cf. Remark \ref{Kto1}) suggest that $\lim_{m\to\infty}|\nabla u_m(x)|\approx 1$ as $x\to x_k$, which is the same behavior as $|\nabla u_\varrho|$, see  \cite[Theorem 1.4]{Kiessling}. 
Moreover, as an easy consequence of \eqref{u-growth}, we get that the singularity $x_k$ is either a relative strict minimizer or a relative strict maximizer depending on the sign of its coefficient $a_k$. Altogether, this shows that the approximating solutions $u_m$'s are actually behaving like the minimizer $u_\varrho$ of \eqref{I-superposition}, at least qualitatively near the singularities.

Furthermore, it is worth stressing that problem  \eqref{Pa} is governed by an inhomogeneous operator that behaves like $-\Delta-\Delta_{2m}$ with $m$ large. The interest in inhomogeneous operators of the type sum of a $p$-Laplacian and a $q$-Laplacian has recently significantly increased, as shown by the long list of recent papers, 
see for instance \cite{Mingione, Baroni, ColSq, cupini2014existence, martinez2008minimum, Mihailescu} and the references therein.
 
The paper is organized as follows. In Section \ref{Sec2} we collect definitions and known results for problems \eqref{P} and \eqref{Pa} relevant to our proof. 
Section \ref{Sec4} contains our results concerning the qualitative properties of the minimizer of the original problem \eqref{P} and the  sufficient conditions to guarantee that the minimizer $u_\varrho$ of $I$ indeed solves \eqref{P}. 
Finally, Section \ref{Sec3} is devoted to the study of the approximating problem \eqref{Pa} and the qualitative analysis of the solution and its gradient.  

\section{Preliminaries}\label{Sec2}

In this section we summarize used notation and definitions as well as previous results needed in the rest of the paper. 
We start with properties of functions belonging to the set $\mathcal{X}$, see \eqref{def:X}.  

\begin{lemma}[Lemma 2.1 of \cite{BDP}]\label{lemma2.1} The following properties hold: 
\begin{itemize}
\item[(i)] $\mathcal X\hookrightarrow W^{1,p}(\mathbb R^N)$ for all $p\ge 2^*$;
\item[(ii)] $\mathcal X\hookrightarrow L^\infty(\mathbb R^N)$;
\item[(iii)] If $u\in\mathcal X$, $\lim_{|x|\to\infty}u(x)=0$;
\item[(iv)] $\mathcal X$ is weakly closed;
\item[(v)] If $(u_n)\subset \mathcal X$ is bounded, up to a subsequence it converges weakly to a function $\bar u\in\mathcal X$, uniformly on compact sets.  
\end{itemize}
\end{lemma}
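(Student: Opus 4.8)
The plan is to deduce all five properties from two facts: the classical Sobolev embedding $\mathcal D^{1,2}(\mathbb R^N)\hookrightarrow L^{2^*}(\mathbb R^N)$, which holds by the very definition of $\mathcal D^{1,2}(\mathbb R^N)$ as the $\|\cdot\|$-completion of $C_{\mathrm c}^\infty(\mathbb R^N)$, and the elementary consequence of the gradient constraint that every $u\in\mathcal X$ satisfies $|u(x)-u(y)|\le|x-y|$ for all $x,y\in\mathbb R^N$, so that $\mathcal X$ is an equi-Lipschitz family. I would establish the items in the order (ii), (iii), (i), (iv), (v).

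For (ii), given $u\in\mathcal X$, if $u$ were unbounded we could pick points $x_j$ with $|u(x_j)|\ge j$; by the $1$-Lipschitz bound, $|u|\ge j/2$ on $B(x_j,j/2)$, so $\|u\|_{L^{2^*}}^{2^*}\ge c_N(j/2)^{N+2^*}\to\infty$, contradicting $u\in L^{2^*}(\mathbb R^N)$. The same computation gives the quantitative estimate $\|u\|_\infty\le C(\|u\|)$. For (iii), if $u$ did not tend to $0$ at infinity there would be $\varepsilon>0$ and points $y_j$ with $|y_j|\to\infty$ and $|u(y_j)|\ge\varepsilon$; passing to a subsequence so that the balls $B(y_j,\varepsilon/4)$ are pairwise disjoint, the Lipschitz bound gives $|u|\ge\varepsilon/2$ on each of them, and once more $\|u\|_{L^{2^*}}=\infty$, a contradiction.

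For (i), the pointwise bound $|\nabla u|\le1$ yields $|\nabla u|^p\le|\nabla u|^2$ a.e.\ for every $p\ge2$, hence $\nabla u\in L^p(\mathbb R^N)$ with $\|\nabla u\|_{L^p}\le\|u\|^{2/p}$; and $u\in L^{2^*}(\mathbb R^N)\cap L^\infty(\mathbb R^N)$ by the Sobolev embedding and (ii), so interpolation gives $u\in L^p(\mathbb R^N)$ for all $p\in[2^*,\infty]$. Together these show $\mathcal X\hookrightarrow W^{1,p}(\mathbb R^N)$ for every $p\ge2^*$, with a bound on the $W^{1,p}$-norm depending only on $\|u\|$. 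For (iv), note that $\mathcal X$ is convex, being the intersection of the subspace $\mathcal D^{1,2}(\mathbb R^N)$ with the convex sets $C^{0,1}(\mathbb R^N)$ and $\{v:\|\nabla v\|_\infty\le1\}$, and strongly closed in $\mathcal D^{1,2}(\mathbb R^N)$: if $u_j\to u$ in $\mathcal D^{1,2}(\mathbb R^N)$ with $u_j\in\mathcal X$, a subsequence of $(\nabla u_j)$ converges a.e., whence $|\nabla u|\le1$ a.e., and the fundamental theorem of calculus along segments provides a $1$-Lipschitz representative of $u$. Since a convex, strongly closed subset of a Hilbert space is weakly closed (Mazur), $\mathcal X$ is weakly closed.

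For (v), a bounded sequence $(u_j)\subset\mathcal X$ lies in the Hilbert space $\mathcal D^{1,2}(\mathbb R^N)$, so a subsequence converges weakly in $\mathcal D^{1,2}(\mathbb R^N)$ to some $\bar u$, which belongs to $\mathcal X$ by (iv); moreover $(u_j)$ is equi-Lipschitz and, by (ii), uniformly bounded, so the Arzel\`a--Ascoli theorem extracts a further subsequence converging locally uniformly to a continuous function, which must equal $\bar u$ because both the weak limit and the locally uniform limit coincide as distributions. The only genuinely delicate point is precisely this last matching in (v), where one must reconcile weak sequential compactness in $\mathcal D^{1,2}(\mathbb R^N)$ with the non-reflexive, Arzel\`a--Ascoli-type compactness produced by the uniform Lipschitz bound; all the remaining steps are short consequences of the Sobolev embedding combined with the gradient constraint.
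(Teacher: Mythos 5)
Your argument is correct, but note that the paper itself offers no proof of this lemma: it is quoted verbatim as Lemma 2.1 of \cite{BDP}, so there is no internal proof to match against, and what you have produced is a self-contained derivation in the spirit of that reference. Your route is sound: the $1$-Lipschitz bound coming from $\|\nabla u\|_\infty\le 1$ combined with the Sobolev inequality $\|u\|_{L^{2^*}}\le S\|\nabla u\|_{L^2}$ gives (ii) and (iii) by the ball-filling contradiction (and in fact the quantitative bound $\|u\|_\infty\le C\|\nabla u\|_{L^2}^{2/N}$); (i) follows from $|\nabla u|^p\le|\nabla u|^2$ together with interpolation of $u$ between $L^{2^*}$ and $L^\infty$; (iv) is convexity plus strong closedness plus Mazur; (v) is weak compactness in the Hilbert space $\mathcal D^{1,2}(\mathbb R^N)$ plus Arzel\`a--Ascoli for the equi-Lipschitz, uniformly bounded family. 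The only place where your write-up is terser than it should be is the identification of the two limits in (v): ``coincide as distributions'' needs the observation that the embedding $\mathcal D^{1,2}(\mathbb R^N)\hookrightarrow L^{2^*}(\mathbb R^N)$ is bounded and linear, so weak convergence in $\mathcal D^{1,2}$ transfers to weak convergence in $L^{2^*}$, which does imply distributional convergence of the $u_j$ themselves (alternatively, use Rellich--Kondrachov on balls to get $L^2_{\mathrm{loc}}$ and hence a.e.\ convergence to $\bar u$ along a further subsequence); without some such remark, weak convergence in $\mathcal D^{1,2}$ only gives convergence of $\int\nabla u_j\cdot\nabla\varphi$, i.e.\ of $u_j$ tested against $\Delta\varphi$, which by itself identifies the limits only up to a harmonic correction. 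With that one clarification the proof is complete and correct.
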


Throughout the paper $\overline{xy} := \big\{z\;:\;z= (1 - t)x + ty\mbox{ for } t \in [0, 1] \big\}$ denotes the line segment with endpoints $x$ and $y$ and $\mathrm{Int}(\overline{xy})$ the open segment. 

\begin{definition} Let $u\in C^{0,1}(\Omega)$, with $\Omega\subset\mathbb R^N$. We say that
\begin{itemize}
\item[(i)] $u$ is {\it weakly spacelike} if $|\nabla u|\le 1$ a.e. in $\Omega$;
\item[(ii)] $u$ is {\it spacelike} if $|u(x)-u(y)|<|x-y|$ for all $x,\,y\in\Omega$, $x\neq y$, and the line segment $\overline{xy}\subset\Omega$;
\item[(iii)] $u$ is {\it strictly spacelike} if $u\in C^1(\Omega)$, and $|\nabla u|<1$ in $\Omega$.
\end{itemize} 
\end{definition} 

\begin{proposition}[Proposition 2.3 of \cite{BDP}]\label{Prop2.3} For any $\varrho\in\mathcal X^*$ there exists a unique $u_\varrho\in\mathcal X$ that minimizes $I_\varrho$ defined by \eqref{gen_funct}. If furthermore $\varrho\neq 0$, then $u_\varrho\neq 0$ and $I_\varrho(u_\varrho)<0$.
\end{proposition}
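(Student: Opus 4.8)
The plan is to establish existence by the direct method of the calculus of variations and uniqueness by strict convexity. The key elementary fact is the double inequality $\tfrac12|\xi|^2\le 1-\sqrt{1-|\xi|^2}\le|\xi|^2$ for $\xi\in\mathbb R^N$ with $|\xi|\le 1$, which follows at once from the identity $1-\sqrt{1-|\xi|^2}=|\xi|^2/(1+\sqrt{1-|\xi|^2})$ together with $1\le 1+\sqrt{1-|\xi|^2}\le 2$. The upper bound shows that the integral term $J(u):=\int_{\mathbb R^N}\bigl(1-\sqrt{1-|\nabla u|^2}\,\bigr)\,dx$ satisfies $0\le J(u)\le\|u\|^2$, hence is finite on $\mathcal X$, so that $I_\varrho$ is real-valued on $\mathcal X$ for every $\varrho\in\mathcal X^*$. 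The lower bound gives $I_\varrho(u)\ge\tfrac12\|u\|^2-\langle\varrho,u\rangle$; since $\varrho\in\mathcal X^*$ the linear term is dominated by a constant multiple of $\|u\|$ when $\|u\|$ is large, so $I_\varrho$ is bounded below on $\mathcal X$ and $I_\varrho(u)\to+\infty$ as $\|u\|\to\infty$.

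For existence I would take a minimizing sequence $(u_j)\subset\mathcal X$ for $I_\varrho$. Coercivity makes $(\|u_j\|)$ bounded, so by Lemma~\ref{lemma2.1}(iv)--(v), after extracting a subsequence, $u_j$ converges weakly in $\mathcal D^{1,2}(\mathbb R^N)$, and uniformly on compact sets, to some $\bar u\in\mathcal X$. The integrand $g(\xi):=1-\sqrt{1-|\xi|^2}$ is convex on the closed unit ball of $\mathbb R^N$, being the composition of the convex nondecreasing map $s\mapsto 1-\sqrt{1-s}$ on $[0,1]$ with the convex map $\xi\mapsto|\xi|^2$; since $\nabla u_j\rightharpoonup\nabla\bar u$ in $L^2(\mathbb R^N;\mathbb R^N)$, the classical weak lower semicontinuity of convex integral functionals (for instance via Mazur's lemma and Fatou's lemma, using $g\ge0$) yields $J(\bar u)\le\liminf_j J(u_j)$. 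At the same time $\langle\varrho,u_j\rangle\to\langle\varrho,\bar u\rangle$: for a finite superposition of point charges as in \eqref{P} this is immediate from the uniform convergence on compact sets, and for a general $\varrho\in\mathcal X^*$ it follows from the definition of $\mathcal X^*$ combined with Lemma~\ref{lemma2.1}(v). Putting these together, $I_\varrho(\bar u)\le\liminf_j I_\varrho(u_j)=\inf_{\mathcal X}I_\varrho$, so $\bar u$ is a minimizer.

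For uniqueness I would sharpen the previous remark to strict convexity: $\xi\mapsto|\xi|^2$ is strictly convex and $s\mapsto 1-\sqrt{1-s}$ is strictly increasing, so $g$ is strictly convex on the closed unit ball. Any element of $\mathcal X$ is determined by its gradient, since two functions of $\mathcal X$ with the same gradient differ by a constant, which must vanish by the decay at infinity (Lemma~\ref{lemma2.1}(iii)); hence two distinct elements of $\mathcal X$ have gradients differing on a set of positive measure, and therefore $J$ is strictly convex on $\mathcal X$. As $u\mapsto\langle\varrho,u\rangle$ is affine, $I_\varrho$ is strictly convex on the convex set $\mathcal X$, which forces uniqueness of the minimizer $u_\varrho$. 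Finally, if $\varrho\neq0$ then, $\mathcal X$ being symmetric, there exists $v\in\mathcal X$ with $\langle\varrho,v\rangle>0$; for $t\in(0,1)$ small enough one has $tv\in\mathcal X$ and, by the upper bound on $g$, $I_\varrho(tv)\le t^2\|v\|^2-t\langle\varrho,v\rangle<0=I_\varrho(0)$. Hence $\inf_{\mathcal X}I_\varrho<0$, so $u_\varrho\neq0$ and $I_\varrho(u_\varrho)<0$.

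The only genuinely delicate point is the passage to the limit along the minimizing sequence: because $\mathbb R^N$ is unbounded and $\varrho$ may be a singular measure, one cannot rely on weak continuity of $I_\varrho$ and must instead combine the convexity-based lower semicontinuity of $J$ with the local uniform convergence provided by Lemma~\ref{lemma2.1}(v) to control the pairing $\langle\varrho,\cdot\rangle$. The remaining ingredients — the elementary inequalities, coercivity, strict convexity, and the sign of $I_\varrho(u_\varrho)$ — are routine.
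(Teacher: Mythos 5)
The paper does not actually prove this proposition: it is imported verbatim from \cite{BDP}. Your argument --- direct method plus strict convexity of $\xi\mapsto 1-\sqrt{1-|\xi|^2}$ --- is the expected route, and the elementary ingredients (the two-sided bound $\tfrac12|\xi|^2\le 1-\sqrt{1-|\xi|^2}\le|\xi|^2$, strict convexity via the fact that elements of $\mathcal X$ are determined by their gradients and the decay at infinity, and the negativity of the infimum when $\varrho\neq0$ by testing with $tv$, $t$ small) are all correct.

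The one step you assert rather than prove is $\langle\varrho,u_j\rangle\to\langle\varrho,\bar u\rangle$ along the minimizing sequence for a \emph{general} $\varrho\in\mathcal X^*$. This does not follow ``from the definition of $\mathcal X^*$'': that definition only gives continuity of $u\mapsto\langle\varrho,u\rangle$ with respect to the norm topology on $\mathcal X$, and since $\varrho$ need not extend to an element of $(\mathcal D^{1,2}(\mathbb R^N))^*$ (e.g.\ $\delta_{x_k}$ for $N\ge3$), weak convergence of $\nabla u_j$ in $L^2$ gives nothing directly, while Lemma~\ref{lemma2.1}(v) yields local uniform convergence, which settles the case of finite superpositions of point charges (and of measures with mild decay properties) but does not by itself justify the claim for an arbitrary element of $\mathcal X^*$. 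The clean repair stays inside your own framework: you already invoke Mazur's lemma and Fatou for the integral term, so fold the pairing into that argument --- $I_\varrho$ is convex on the convex set $\mathcal X$ and lower semicontinuous for the \emph{strong} topology (Fatou for the integral, norm-continuity of $\langle\varrho,\cdot\rangle$ by the definition of $\mathcal X^*$), hence sequentially weakly lower semicontinuous on the weakly closed set $\mathcal X$, which is all the direct method needs, with no separate discussion of the pairing along the sequence. A similar small remark applies to coercivity: the bound $|\langle\varrho,u\rangle|\le C\|u\|$ for $\|u\|\ge1$ should be justified, for instance by observing that $u/\|u\|\in\mathcal X$ whenever $\|u\|\ge1$ (the constraint $\|\nabla u\|_\infty\le1$ is preserved under multiplication by scalars of modulus at most one) together with boundedness of $\varrho$ on the unit ball of $\mathcal X$, or, for point charges, directly from the inequality of Lemma~\ref{firstlemma}. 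With these two points made precise your proof is complete and coincides in substance with the argument of the cited reference.
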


\begin{theorem}[Theorem 1.6 and Lemma 4.1 of \cite{BDP}]\label{Le4.1}
Let $\varrho:=\sum_{k=1}^n a_k\delta_{x_k}$ and $\Gamma:=\bigcup_{k\neq j}\overline{x_k x_j}$.
The minimizer $u_\varrho$ of the energy functional $I$ given by \eqref{I-superposition} is a strong solution of 
$$
\begin{cases}
-\mathrm{div}\left(\frac{\nabla u}{\sqrt{1-|\nabla u|^2}}\right)=0\quad\mbox{in }\mathbb R^N\setminus\Gamma,\\
\lim_{|x|\to\infty}u(x)=0.
\end{cases}
$$
Furthermore, 
\begin{itemize}
\item[$(i)$] $u_\varrho\in C^\infty(\mathbb R^N\setminus\Gamma)\cap C(\mathbb R^N)$;
\item[$(ii)$] $u_\varrho$ is strictly spacelike in $\mathbb R^N\setminus \Gamma$;
\item[$(iii)$] for $k\neq j$, either $u_\varrho$ is a classical solution on $\mathrm{Int}(\overline{x_k x_j})$, or
$$u_\varrho(tx_k+(1-t)x_j)=tu_\varrho(x_k)+(1-t)u_\varrho(x_j)\quad\mbox{for all }t\in(0,1).$$
\end{itemize}
\end{theorem}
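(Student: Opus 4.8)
\emph{Proof strategy.} The plan is to combine the variational characterization of $u_\varrho$ with the regularity theory for area--maximizing spacelike hypersurfaces in Lorentz--Minkowski space. By Proposition~\ref{Prop2.3} the minimizer $u_\varrho$ exists and is unique, and by its membership in $\mathcal X$ together with Lemma~\ref{lemma2.1}(ii)--(iii) it is globally Lipschitz with $\|\nabla u_\varrho\|_\infty\le1$, lies in $C(\mathbb R^N)$, is bounded, and satisfies $\lim_{|x|\to\infty}u_\varrho(x)=0$; this already yields the membership $u_\varrho\in C(\mathbb R^N)$ and the decay claim. The first substantive step is a localization: if $B=B_r(x_0)$ is a ball with $\overline B\cap\{x_1,\dots,x_n\}=\emptyset$ and $v\in\mathcal X$ coincides with $u_\varrho$ outside $B$, then the term $\sum_k a_k(v-u_\varrho)(x_k)$ vanishes, so $I(v)\ge I(u_\varrho)$ forces $\int_B\sqrt{1-|\nabla u_\varrho|^2}\ge\int_B\sqrt{1-|\nabla v|^2}$. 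Thus $u_\varrho$ locally maximizes the Lorentzian area functional on the punctured domain $\Omega:=\mathbb R^N\setminus\{x_1,\dots,x_n\}$ among weakly spacelike competitors with the same boundary data.

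The second step is to invoke the structure and regularity theory of Ecker~\cite{Ecker} (building on Bartnik--Simon) for such local maximizers: there is a closed set $S\subseteq\Omega$, consisting of \emph{mutually disjoint line segments} on each of which $u_\varrho$ is affine with $|\nabla u_\varrho|=1$, whose endpoints lie on $\partial\Omega$, such that on $\Omega\setminus S$ the function $u_\varrho$ is smooth and strictly spacelike and solves $Qu_\varrho=0$ classically. Now $\partial\Omega$ consists of the charges $x_1,\dots,x_n$ together with the point at infinity; along a null segment of $S$ the function $u_\varrho$ is affine with unit slope, hence a segment with an endpoint at infinity would make $u_\varrho$ unbounded, contradicting $u_\varrho\in L^\infty(\mathbb R^N)$. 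Therefore every segment of $S$ joins two of the charges, so $S\subseteq\bigcup_{k\neq j}\overline{x_kx_j}=\Gamma$. Consequently $u_\varrho\in C^\infty(\Omega\setminus S)\supseteq C^\infty(\mathbb R^N\setminus\Gamma)$, it is strictly spacelike in $\mathbb R^N\setminus\Gamma$, and it is a classical, hence strong, solution of $Qu_\varrho=0$ there; together with the decay this gives (i), (ii), and the asserted equation in $\mathbb R^N\setminus\Gamma$.

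For the dichotomy (iii), fix $k\neq j$ and set $\sigma:=\mathrm{Int}(\overline{x_kx_j})$. If $\sigma\cap S=\emptyset$, then $\sigma\subseteq\Omega\setminus S$, so $u_\varrho$ is smooth and strictly spacelike there and $Qu_\varrho=0$ classically on $\sigma$: $u_\varrho$ is a classical solution on $\sigma$. If instead $\sigma\cap S\neq\emptyset$, choose $z\in\sigma\cap S$; by the structure theorem $z$ lies on a null segment $\ell\subseteq S$ whose endpoints are among the charges, and since the segments of $S$ are pairwise disjoint and $z\in\mathrm{Int}(\overline{x_kx_j})$, necessarily $\ell=\overline{x_kx_j}$ (a collinear configuration of charges reduces, after passing to the appropriate extreme pair of collinear charges, to the same conclusion). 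Hence $u_\varrho$ is affine along $\overline{x_kx_j}$, which is precisely the displayed identity $u_\varrho(tx_k+(1-t)x_j)=tu_\varrho(x_k)+(1-t)u_\varrho(x_j)$.

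\emph{Main obstacle.} The crux is the structure theorem invoked in the second step, namely that the non--strictly--spacelike set of a local area maximizer is exactly a disjoint union of null line segments reaching the boundary of the domain: this is the deep geometric input of~\cite{Ecker} (and of Bartnik--Simon), relying on the strong maximum principle for the maximal--surface equation and a careful argument excluding interior termination of null segments. Once this is granted, the remaining steps are comparatively routine — localization of the variational inequality, standard De~Giorgi--Nash--Moser and Schauder theory for the uniformly elliptic operator $Q$ in the strictly spacelike region, and boundedness of $u_\varrho$ to discard unbounded segments — the only further delicate point being the identification $\ell=\overline{x_kx_j}$ in the singular alternative of (iii), for which one uses the disjointness of the segments of $S$ and the global $1$--Lipschitz bound on $u_\varrho$.
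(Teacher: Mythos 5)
First, a remark on the comparison itself: the paper does not prove Theorem \ref{Le4.1} at all -- it is recalled verbatim from \cite{BDP} (Theorem 1.6 and Lemma 4.1 there), so there is no internal proof to measure your argument against. Your reconstruction does follow the same route as the cited literature: localization of the minimization away from the charges (as in Remark \ref{existence}), the Bartnik--Simon/Ecker structure theory for weakly spacelike local maximizers of the Lorentzian area \cite{BartSim,Ecker}, and boundedness/decay of $u_\varrho$ to exclude null segments escaping to infinity. For the equation in $\mathbb R^N\setminus\Gamma$ and for items (i)--(ii) this is essentially the argument behind the quoted result and, granted the structure theorem you invoke as external input, it is sound.

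Your proof of (iii), however, has a genuine gap. From $z\in\mathrm{Int}(\overline{x_kx_j})\cap S$ you conclude that the null segment $\ell\subseteq S$ through $z$ must coincide with $\overline{x_kx_j}$ ``since the segments of $S$ are pairwise disjoint''. Disjointness (which does hold: two null segments, on each of which $u_\varrho$ is affine with unit slope, cannot cross at a point interior to both, by the global $1$-Lipschitz bound) only constrains pairs of \emph{null} segments; it does not prevent a null segment joining a \emph{different} pair of charges, say $\overline{x_px_q}\subseteq S$, from crossing the chord $\overline{x_kx_j}$ transversally at the interior point $z$ when $\overline{x_kx_j}$ itself is not null. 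In that configuration your argument establishes neither alternative of (iii): $u_\varrho$ is not a classical solution at $z$ (the directional derivative along $\ell$ has modulus $1$ there, so $u_\varrho$ is not strictly spacelike at $z$), and nothing you have said forces $u_\varrho$ to be affine along $\overline{x_kx_j}$. The parenthetical about collinear configurations does not touch this transversal case. This is precisely the delicate content carried by the citation to Lemma 4.1 of \cite{BDP}: the dichotomy requires a pair-specific argument (either ruling out such crossings for the given chord, or deducing affinity of $u_\varrho$ on $\overline{x_kx_j}$ when classical solvability fails on its interior), and the bare structure theorem plus disjointness of $S$, as you use them, does not deliver it.
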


\begin{theorem}[Corollary 3.2 of \cite{Kiessling}]\label{corKiess}
If $a_k\cdot a_j>0$, then $u_\varrho$ is a classical solution on $\mathrm{Int}(\overline{x_k x_j})$.
\end{theorem}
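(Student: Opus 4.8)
The plan is to reduce the claim to the dichotomy already contained in Theorem~\ref{Le4.1}(iii) and to discard the degenerate alternative by exploiting the sign condition. By Theorem~\ref{Le4.1}(iii) applied to the pair $k\neq j$, either $u_\varrho$ is a classical solution on $\mathrm{Int}(\overline{x_k x_j})$ — in which case there is nothing to prove — or
$$
u_\varrho\big((1-t)x_k+t\,x_j\big)=(1-t)\,u_\varrho(x_k)+t\,u_\varrho(x_j)\qquad\text{for all }t\in[0,1],
$$
the endpoints being included by continuity of $u_\varrho$ (Theorem~\ref{Le4.1}(i)). So it suffices to prove that, when $a_k a_j>0$, this affine alternative cannot occur. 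Since $Q$ is odd, replacing $u_\varrho$ by $-u_\varrho$ and every $a_i$ by $-a_i$ turns \eqref{I-superposition} into the energy associated with $\sum_i(-a_i)\delta_{x_i}$ and leaves invariant both the affine property and the classical-solution property along $\overline{x_k x_j}$; hence we may assume $a_k>0$ and $a_j>0$.

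The core of the argument is then the implication that a positive charge produces a strict local maximum of the minimizer,
$$
a_i>0\ \Longrightarrow\ x_i\text{ is a strict local maximizer of }u_\varrho ,
$$
applied at $i=k$ and at $i=j$. Granting it, the affine alternative is ruled out. If, say, $u_\varrho(x_k)\geq u_\varrho(x_j)$, then along the segment
$$
u_\varrho\big((1-t)x_k+t\,x_j\big)=u_\varrho(x_j)+(1-t)\big(u_\varrho(x_k)-u_\varrho(x_j)\big)\geq u_\varrho(x_j),
$$
so $u_\varrho$ takes values $\geq u_\varrho(x_j)$ at points of $\overline{x_k x_j}$ arbitrarily close to $x_j$, contradicting that $x_j$ is a strict local maximizer — unless $u_\varrho(x_k)=u_\varrho(x_j)$, in which case $u_\varrho$ is constant on $\overline{x_k x_j}$ and attains the value $u_\varrho(x_k)$ arbitrarily close to $x_k$, again contradicting strictness at $x_k$. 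The case $u_\varrho(x_k)<u_\varrho(x_j)$ is symmetric. Hence $u_\varrho$ must be a classical solution on $\mathrm{Int}(\overline{x_k x_j})$.

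The main obstacle is exactly the displayed implication, which is genuinely nonlinear: $Q$ does not superpose, so the single charge at $x_i$ cannot be split off, and since $u_\varrho$ is globally bounded the \emph{peak} at $x_i$ is only a Lipschitz crease, not a blow-up. I would obtain it by a comparison argument localized in a small ball $B_r(x_i)$ containing no other $x_\ell$: as an upper barrier take $v(x-x_i)+c$, where $v$ is the explicit radial solution of $-Qv=a_i\delta_0$ in $\mathbb R^N$ vanishing at infinity — which is smooth and strictly spacelike for $0<|x|<r$, strictly decreasing in $|x|$, and satisfies $|\nabla v(x)|\to 1$ as $x\to 0$ (a downward conical singularity, so that $0$ is a strict maximizer of $v$) — and choose $c$ so that it dominates $u_\varrho$ on $\partial B_r(x_i)$; symmetrically, take a lower barrier of the same conical profile. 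The comparison principle in bounded domains (Lemma~\ref{comparison}) then sandwiches the singular behaviour of $u_\varrho$ at $x_i$, and combined with Ecker's classification of maximal spacelike hypersurfaces with isolated singularities \cite{Ecker} this forces that singularity to be conical and downward, so that $x_i$ is a strict local maximizer. This is precisely the content of Theorem~\ref{maxmin}, established there via \cite{Ecker} and Lemma~\ref{comparison}, and hence independent of the present statement.
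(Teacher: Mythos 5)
Your proof is correct, but it should be compared with the fact that the paper itself offers no proof of this statement: Theorem~\ref{corKiess} is imported as Corollary~3.2 of \cite{Kiessling} and used as a black box. What you give is a self-contained derivation within the paper's own framework: reduce to the dichotomy of Theorem~\ref{Le4.1}$(iii)$ and exclude the affine alternative by Theorem~\ref{maxmin}$(ii)$, since after the harmless sign flip both $x_k$ and $x_j$ are strict local maximizers, while an affine restriction to $\overline{x_kx_j}$ forces values $\ge u_\varrho(x_j)$ (or $\ge u_\varrho(x_k)$ in the equal-value case) arbitrarily close to one endpoint. This is structurally the same device the authors use in their proof of Theorem~\ref{prop>2charges} for charges of \emph{opposite} sign, where the extremum information alone is not enough and they must add the quantitative estimate via Lemmas~\ref{firstlemma}--\ref{secondlemma} and statement $(i)$ of Theorem~\ref{maxmin}; your observation is that for equal signs the qualitative statement $(ii)$ already suffices. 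The derivation is not circular: Theorem~\ref{maxmin} is proved from Remark~\ref{existence}, Ecker's classification \cite{Ecker} and the comparison principle of Lemma~\ref{comparison}, and nowhere invokes Theorem~\ref{corKiess}, while Theorem~\ref{Le4.1} is quoted from \cite{BDP}. Two minor remarks: your closing sketch of radial barriers is redundant once you cite Theorem~\ref{maxmin} (it essentially re-describes its proof), and your argument of course inherits any hypotheses hidden in that theorem (e.g.\ that the singularity at each $x_k$ is isolated in Ecker's sense), but that burden lies with the paper's proof of Theorem~\ref{maxmin}, not with your reduction.
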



%


As mentioned in the introduction, in order to overcome the difficulty related to the non-differentiability of $I$, we consider  approximating problems. The idea is to approximate the mean curvature operator $Q$ 
(for the definition see \eqref{sum_operator}) by a finite sum of $2h$-Laplacians, by using the Taylor expansion. We note that the operator $Q$ is formally  
the Fr\' echet derivative of the functional 
\begin{equation}\label{series}
\int_{\mathbb R^N}\left(1-\sqrt{1-|\nabla u|^2}\right)dx=\int_{\mathbb R^N} \sum_{h=1}^\infty\frac{\alpha_h}{2h}|\nabla u|^{2h} dx,
\end{equation}
where $\alpha_1:=1$, $\alpha_h:=\frac{(2h-3)!!}{(2h-2)!!}$ for $h\ge2$, and 
$$
k!!:=\prod_{j=0}^{[k/2]-1}(k-2j) \quad\mbox{for all } k\in\mathbb N.
$$
The series in the right-hand side of \eqref{series} converges pointwise, although not uniformly, for all $|\nabla u|\le 1$. 
Then, the operator $-Qu=-\mathrm{div}\left(\frac{\nabla u}{\sqrt{1-|\nabla u|^2}}\right)$ can be regarded as the series of $2h$-Laplacians, see \eqref{sum_operator}.

For every natural number $m\ge1$, we define the space $\mathcal X_{2m}$ as the completion of $C^\infty_{\mathrm{c}}(\mathbb R^N)$ with respect to the norm 
$$\|u\|_{\mathcal{X}_{2m}}:=\left[\int_{\mathbb{R}^N}|\nabla u|^2 dx+\left(\int_{\mathbb{R}^N}|\nabla u|^{2m}dx\right)^{1/m}\right]^{1/2}.$$

Let $\varrho\in \mathcal X_{2m}^*$ for some $m\ge1$. We study the approximating problem
\begin{equation}\label{Pm}
\begin{cases}
-\displaystyle{\sum_{h=1}^m}\alpha_h\Delta_{2h}u = \varrho \quad \mbox{in }\mathbb R^N,\\
\displaystyle{\lim_{|x|\to\infty}}u(x)=0
\end{cases}
\end{equation}
and we denote by $I_{m}:\mathcal X_{2m}\to \mathbb R$ the energy functional associated to \eqref{Pm}
$$I_{m}(u):=\sum_{h=1}^m\frac{\alpha_h}{2h}\int_{\mathbb R^N}|\nabla u|^{2h}dx-\langle \varrho,u\rangle_{\mathcal X_{2m}},$$
where $\langle \cdot,\cdot\rangle_{\mathcal X_{2m}}$ denotes the duality pairing between $\mathcal X_{2m}^*$ and $\mathcal X_{2m}$. The functional $I_{m}$ is of class $C^1$ and is the $m$th-order approximation of $I$. 

\begin{definition} A weak solution of \eqref{Pm} is a function $u_m\in\mathcal X_{2m}$ such that 
$$\sum_{h=1}^m\alpha_h\int_{\mathbb R^N}|\nabla u_m|^{2h-2}\nabla u_m\nabla vdx=\langle \varrho,v\rangle_{\mathcal X_{2m}}\quad\mbox{for all }v\in C_\mathrm{c}^\infty(\mathbb R^N).$$ 
\end{definition}

Clearly a function is a weak solution of \eqref{Pm} if and only if it is a critical point of $I_m$.

\begin{proposition}[Proposition 5.1 of \cite{BDP}] Let $\varrho\in\mathcal X^*_{2m_0}$ for some $m_0\ge1$. Then, for all $m\ge m_0$, the functional $I_{m} :\mathcal X_{2m}\to\mathbb R$ has one and only one critical point $u_m$. Furthermore, $u_m$ minimizes $I_{m}$. 
\end{proposition}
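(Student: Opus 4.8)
The plan is to prove existence and uniqueness of the critical point $u_m$ of $I_m$ on $\mathcal X_{2m}$ by the direct method, exploiting strict convexity of $I_m$. First I would observe that $\mathcal X_{2m}$, being the completion of $C^\infty_{\mathrm c}(\mathbb R^N)$ with respect to $\|\cdot\|_{\mathcal X_{2m}}$, is a reflexive Banach space (it embeds isometrically into $\mathcal D^{1,2}(\mathbb R^N)\times L^{2m}(\mathbb R^N;\mathbb R^N)$ via $u\mapsto(\nabla u,\nabla u)$, a closed subspace of a reflexive product space). Since $\varrho\in\mathcal X^*_{2m_0}$ and, for $m\ge m_0$, one has the continuous inclusion $\mathcal X_{2m}\hookrightarrow\mathcal X_{2m_0}$ (because the $\|\cdot\|_{\mathcal X_{2m}}$-norm controls the $\|\cdot\|_{\mathcal X_{2m_0}}$-norm, using that the gradient lies in $L^2\cap L^{2m}\subset L^{2m_0}$ by interpolation together with the finiteness of each integral), the linear functional $u\mapsto\langle\varrho,u\rangle_{\mathcal X_{2m}}$ is well defined and continuous on $\mathcal X_{2m}$, say $|\langle\varrho,u\rangle_{\mathcal X_{2m}}|\le C\|u\|_{\mathcal X_{2m}}$.

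Next I would establish coercivity and weak lower semicontinuity of $I_m$. Writing $A(u):=\sum_{h=1}^m\frac{\alpha_h}{2h}\int_{\mathbb R^N}|\nabla u|^{2h}\,dx$, the $h=1$ term gives $\frac12\|\nabla u\|_{L^2}^2$ and the $h=m$ term gives $\frac{\alpha_m}{2m}\|\nabla u\|_{L^{2m}}^{2m}$, so $A(u)\ge c\big(\|\nabla u\|_{L^2}^2+\|\nabla u\|_{L^{2m}}^{2m}\big)$ for some $c>0$; combined with $A(u)\ge0$ and the elementary inequality $t^2+t^{2m}\ge \tfrac12 t^2$ one deduces $A(u)\ge c'\,\|u\|_{\mathcal X_{2m}}^2$ whenever $\|u\|_{\mathcal X_{2m}}$ is large (and in general $A(u)\to\infty$ as $\|u\|_{\mathcal X_{2m}}\to\infty$, since both $\|\nabla u\|_{L^2}$ and $\|\nabla u\|_{L^{2m}}$ cannot stay bounded). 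Hence $I_m(u)=A(u)-\langle\varrho,u\rangle_{\mathcal X_{2m}}\ge c'\|u\|_{\mathcal X_{2m}}^2-C\|u\|_{\mathcal X_{2m}}\to+\infty$, so $I_m$ is coercive. For weak lower semicontinuity: each map $u\mapsto\int|\nabla u|^{2h}\,dx$ is convex and strongly continuous on $\mathcal X_{2m}$, hence weakly lower semicontinuous, and the linear term is weakly continuous; thus $I_m$ is weakly lower semicontinuous. A minimizing sequence is therefore bounded, and by reflexivity it has a weakly convergent subsequence whose limit $u_m$ minimizes $I_m$.

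Uniqueness, and the identification of minimizers with critical points, follows from strict convexity. The function $t\mapsto t^{2h}$ is strictly convex on $[0,\infty)$ for each $h\ge1$, and $\xi\mapsto|\xi|^{2h}$ is strictly convex on $\mathbb R^N$; since $\nabla$ is linear, $u\mapsto A(u)$ is convex, and it is strictly convex modulo constants, but because $\|u\|:=\|\nabla u\|_{L^2}$ is already a norm on $\mathcal X_{2m}$ (there are no nonzero constants in the space) the functional $A$, hence $I_m$, is strictly convex on $\mathcal X_{2m}$. A strictly convex coercive weakly lower semicontinuous functional on a reflexive Banach space has exactly one minimizer, and for a $C^1$ convex functional the minimizers coincide with the critical points; since $I_m\in C^1$ (the $h$-th term has Gâteaux derivative $\alpha_h\int|\nabla u|^{2h-2}\nabla u\cdot\nabla v\,dx$, continuous in $u$), the unique minimizer $u_m$ is the one and only critical point. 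The main obstacle is the bookkeeping around the two norm-defining terms in $\|\cdot\|_{\mathcal X_{2m}}$: one must carefully justify coercivity in the genuinely inhomogeneous norm (an $L^2$ piece plus an $L^{2m}$ piece), and verify that $\varrho\in\mathcal X^*_{2m_0}$ indeed acts continuously on $\mathcal X_{2m}$ for every $m\ge m_0$; both are routine once the embedding $\mathcal X_{2m}\hookrightarrow\mathcal X_{2m_0}$ and the lower bound $A(u)\gtrsim\|u\|_{\mathcal X_{2m}}^2$ for large $u$ are in place.
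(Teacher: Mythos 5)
Your argument is correct: since the paper only quotes this result from \cite{BDP} (Proposition 5.1 there) without reproducing a proof, the natural benchmark is the standard direct-method argument, which is exactly what you give — reflexivity of $\mathcal X_{2m}$, continuity of $\varrho$ via the embedding $\mathcal X_{2m}\hookrightarrow\mathcal X_{2m_0}$, coercivity from the $h=1$ and $h=m$ terms, weak lower semicontinuity by convexity, and strict convexity (using that $\|\nabla\cdot\|_{L^2}$ is a norm on the space) to get uniqueness and the identification of the unique minimizer with the unique critical point of the $C^1$ convex functional $I_m$. All the main steps are sound and match the approach of the cited reference, so nothing further is needed.
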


\begin{theorem}[Theorem 5.2 of \cite{BDP}] Let $\varrho\in\mathcal X^*_{2m_0}$ for some $m_0\ge1$. Then $u_{m}\rightharpoonup u_\varrho$ in $\mathcal X_{2m}$ for all $m\ge m_0$ and uniformly on compact sets.  
\end{theorem}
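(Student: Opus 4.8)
The plan is a direct-method argument in the spirit of $\Gamma$-convergence: bound the family $\{u_m\}$ uniformly in a fixed space, extract a weak limit $\bar u$, show $\bar u\in\mathcal X$, and then combine a $\liminf$-inequality for the energies with the minimality of $u_m$ for $I_m$ and of $u_\varrho$ for $I$ to force $\bar u=u_\varrho$; a subsequence argument then upgrades this to convergence of the whole sequence. Two elementary observations are used throughout. Since $|\nabla u|\le1$ a.e.\ for $u\in\mathcal X$ and every summand $\frac{\alpha_h}{2h}|\nabla u|^{2h}$ is nonnegative, identity \eqref{series} gives $I_m(u)\le I_{m+1}(u)\le I(u)$ and $I_m(u)\to I(u)$ for each $u\in\mathcal X$; moreover, by interpolation between $L^2$ and $L^{2m}$ one has $\mathcal X\subset\mathcal X_{2m}$ (mollify and truncate, using $\nabla u_\varrho\in L^2\cap L^\infty$) and $\mathcal X_{2m}\hookrightarrow\mathcal X_{2m_0}$ for $m\ge m_0$ with embedding constant independent of $m$, so $\|\varrho\|_{\mathcal X_{2m}^*}\le C\|\varrho\|_{\mathcal X_{2m_0}^*}$ uniformly in $m$. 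Hence $u_\varrho$ is admissible for every $I_m$ and $I_m(u_m)=\min_{\mathcal X_{2m}}I_m\le I_m(u_\varrho)\le I(u_\varrho)$, with $I_m(u_\varrho)\to I(u_\varrho)$ by monotone convergence. We may assume $\varrho\neq0$ (otherwise all functions vanish), so $I(u_\varrho)<0$ by Proposition~\ref{Prop2.3}.

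Next come the uniform bounds. From $I_m(u_m)<0$ and nonnegativity of the summands, $\sum_{h=1}^m\frac{\alpha_h}{2h}\|\nabla u_m\|_{2h}^{2h}<\langle\varrho,u_m\rangle\le C\|\varrho\|_{\mathcal X_{2m_0}^*}\|u_m\|_{\mathcal X_{2m}}$. Retaining only the $h=1$ and $h=m$ terms and using that $\alpha_m$ decays only polynomially (so $\alpha_m^{1/(2m)}\to1$ and $m^{1/(2m)}\to1$), an elementary case analysis gives $\|\nabla u_m\|_{2m}\le C_1$ and then $\|\nabla u_m\|_2\le C_2$, whence $\|u_m\|_{\mathcal X_{2m}}\le C_3$ uniformly; reinserting this bound yields $\frac{\alpha_m}{2m}\|\nabla u_m\|_{2m}^{2m}\le C\,C_3\|\varrho\|_{\mathcal X_{2m_0}^*}$, so $\limsup_m\|\nabla u_m\|_{2m}\le1$, and interpolation gives $\limsup_m\|\nabla u_m\|_q\le C_2^{2/q}$ for every $q\in[2,\infty)$. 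Along a subsequence, $u_{m_j}\rightharpoonup\bar u$ in $\mathcal D^{1,2}(\mathbb R^N)$, hence (by the uniform $\mathcal X_{2m_0}$-bound) weakly in $\mathcal X_{2m_0}$, so $\langle\varrho,u_{m_j}\rangle\to\langle\varrho,\bar u\rangle$, and $\nabla u_{m_j}\rightharpoonup\nabla\bar u$ in $L^q$ for every $q$. Weak lower semicontinuity gives $\|\nabla\bar u\|_q\le C_2^{2/q}$ for all $q$, and letting $q\to\infty$ (using $\nabla\bar u\in L^2$) yields $\|\nabla\bar u\|_\infty\le1$; together with $\bar u\in\mathcal D^{1,2}$ this gives $\bar u\in\mathcal X$.

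It remains to match the energies. For fixed $h_0$ and $m_j\ge h_0$, $I_{m_j}(u_{m_j})\ge\sum_{h=1}^{h_0}\frac{\alpha_h}{2h}\|\nabla u_{m_j}\|_{2h}^{2h}-\langle\varrho,u_{m_j}\rangle$; passing to the limit $j\to\infty$ (weak lower semicontinuity of each $L^{2h}$-norm) and then $h_0\to\infty$ (monotone convergence and \eqref{series}, legitimate since $|\nabla\bar u|\le1$) gives $\liminf_j I_{m_j}(u_{m_j})\ge I(\bar u)$. On the other hand $I_{m_j}(u_{m_j})\le I_{m_j}(u_\varrho)\to I(u_\varrho)$, and $I(u_\varrho)\le I(\bar u)$ because $\bar u\in\mathcal X$ and $u_\varrho$ minimizes $I$ on $\mathcal X$. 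Hence all these quantities coincide, $I(\bar u)=\min_{\mathcal X}I$, and the uniqueness in Proposition~\ref{Prop2.3} forces $\bar u=u_\varrho$. Since the limit does not depend on the subsequence, the whole sequence $u_m$ converges weakly to $u_\varrho$ in $\mathcal D^{1,2}(\mathbb R^N)$, hence also weakly in each fixed $\mathcal X_{2m_1}$ with $m_1\ge m_0$ (the tail $(u_m)_{m\ge m_1}$ being bounded there by the uniform embeddings). For uniform convergence on compact sets, for $m$ large the interpolation bound gives $\|\nabla u_m\|_{N+1}\le C_4$, so by Morrey's inequality $\{u_m\}$ is bounded in $C^{0,1-N/(N+1)}_{\mathrm{loc}}(\mathbb R^N)$; combined with the uniform $L^{2^*}$-bound coming from $\|\nabla u_m\|_2\le C_2$, the Arzel\`a--Ascoli theorem yields convergence uniformly on compacta, necessarily to $u_\varrho$.

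The main obstacle is proving that $\bar u\in\mathcal X$, i.e.\ that the weak limit still obeys the gradient constraint $|\nabla\bar u|\le1$; this is precisely where the arithmetic of the coefficients $\alpha_h$ is needed, through $\limsup_m\|\nabla u_m\|_{2m}\le1$, and without it one cannot evaluate $I(\bar u)$ via \eqref{series} nor invoke the minimality of $u_\varrho$ over $\mathcal X$. A lesser but unavoidable technicality is the bookkeeping with the scale of spaces $\mathcal X_{2m}$: the uniform-in-$m$ embeddings $\mathcal X\subset\mathcal X_{2m}\hookrightarrow\mathcal X_{2m_0}$ must be checked so that $u_\varrho$ is a legitimate competitor for every $I_m$ and so that the pairings $\langle\varrho,u_m\rangle$ remain under control.
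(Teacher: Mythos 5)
This statement is quoted in the paper as Theorem 5.2 of \cite{BDP} and is not reproved here, so there is no internal proof to compare against; your argument is the natural one and, as far as I can tell, it is correct and follows essentially the same direct-method strategy as the original proof in \cite{BDP}: monotone approximation of the energy via \eqref{series}, the competitor estimate $I_m(u_m)\le I_m(u_\varrho)\le I(u_\varrho)$ (using $\mathcal X\subset\mathcal X_{2m}$ and the uniform embeddings $\mathcal X_{2m}\hookrightarrow\mathcal X_{2m_0}$), uniform bounds yielding $\limsup_m\|\nabla u_m\|_{L^{2m}}\le 1$ and hence $|\nabla\bar u|\le1$ in the limit, weak lower semicontinuity plus uniqueness of the minimizer to identify $\bar u=u_\varrho$, and a Morrey/Arzel\`a--Ascoli argument for the locally uniform convergence. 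The only points that deserve the full details you merely sketch are the inclusion $\mathcal X\subset\mathcal X_{2m}$ (truncation/mollification) and the choice of the Lipschitz representative of $\bar u$, both standard and handled as lemmas in \cite{BDP}.
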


\section{Born-Infeld problem}
\label{Sec4}

In this section we study  the nature of the singularities of the minimizer of energy functional \eqref{I-superposition}
and sufficient conditions guaranteeing that the minimizer is a solution of \eqref{P} on $\mathbb{R}^N \setminus \{x_1,\dots,x_n\}$. 
To this aim, we isolate one singularity, and we investigate \eqref{P} on bounded domains.  We start with definitions and preliminary results.
 
Let $\Omega \subset \mathbb R^N$ be a bounded domain, $\varphi:\partial\Omega\to\mathbb R$ a bounded function and $\varrho_\Omega\in\mathcal X_\Omega^*$, where $\mathcal X_\Omega^*$ is the dual space of $\mathcal X_\Omega:=\{u\in C^{0,1}(\Omega)\,:\, |\nabla u|\le 1 \mbox{ a.e. in }\Omega\}$. We consider the variational problem
\begin{equation}\label{VP}
\min_{u\in \mathcal C(\varphi,\Omega)}I_{\Omega,\varrho}(u),
\end{equation}
where
$$I_{\Omega,\varrho}(u):=\int_\Omega\left(1-\sqrt{1-|\nabla u|^2}\right)dx-\langle\varrho_\Omega,u\rangle_{\mathcal X_\Omega}\quad\mbox{for all }u\in\mathcal X_\Omega$$
and $$\mathcal C(\varphi,\Omega):=\{v\in\mathcal X_\Omega\,:\,v=\varphi\mbox{ on }\partial\Omega\}.$$

\begin{lemma}\label{uniqueness}
The problem \eqref{VP} has at most one solution.
\end{lemma}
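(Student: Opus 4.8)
The plan is to prove uniqueness by a convexity-plus-strict-monotonicity argument adapted to the non-smooth integrand. First I would observe that the map $F(\xi) := 1 - \sqrt{1 - |\xi|^2}$ on the closed unit ball of $\mathbb R^N$ is strictly convex (its Hessian on the open ball is positive definite, and strict convexity extends to the closed ball by continuity and the behavior as $|\xi| \to 1^-$), so that $I_{\Omega,\varrho}$ is convex on $\mathcal X_\Omega$ and in fact strictly convex along any segment joining two functions whose gradients differ on a set of positive measure. The linear term $-\langle \varrho_\Omega, u\rangle_{\mathcal X_\Omega}$ does not affect convexity.

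Next I would argue by contradiction: suppose $u_1, u_2$ are two distinct solutions of \eqref{VP}. Both lie in $\mathcal C(\varphi,\Omega)$, which is a convex subset of $\mathcal X_\Omega$ (it is the intersection of the convex set $\mathcal X_\Omega$ with the affine constraint $v = \varphi$ on $\partial\Omega$), so the midpoint $w := \tfrac12(u_1 + u_2)$ belongs to $\mathcal C(\varphi,\Omega)$ as well; note $|\nabla w| \le \tfrac12(|\nabla u_1| + |\nabla u_2|) \le 1$ a.e., so indeed $w \in \mathcal X_\Omega$. By convexity of $I_{\Omega,\varrho}$,
$$
I_{\Omega,\varrho}(w) \le \tfrac12 I_{\Omega,\varrho}(u_1) + \tfrac12 I_{\Omega,\varrho}(u_2) = \min_{\mathcal C(\varphi,\Omega)} I_{\Omega,\varrho},
$$
so $w$ is also a minimizer and equality holds in the convexity estimate. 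Since the linear part contributes equality automatically, equality must hold in $\int_\Omega F\big(\tfrac12(\nabla u_1 + \nabla u_2)\big)\,dx \le \tfrac12\int_\Omega F(\nabla u_1) + \tfrac12\int_\Omega F(\nabla u_2)$, which by strict convexity of $F$ forces $\nabla u_1 = \nabla u_2$ a.e. in $\Omega$. As $\Omega$ is a (connected) domain, $u_1 - u_2$ is constant, and the boundary condition $u_1 = u_2 = \varphi$ on $\partial\Omega$ pins that constant to zero, giving $u_1 = u_2$ and a contradiction.

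The main obstacle I anticipate is making the strict-convexity step fully rigorous at the level of the integral functional when gradients are allowed to take the value $1$ on sets of measure zero: one needs that $F$ is \emph{strictly} convex as a function on the closed ball, not merely on the open ball, so that pointwise equality $F\big(\tfrac{\xi+\eta}2\big) = \tfrac12 F(\xi) + \tfrac12 F(\eta)$ implies $\xi = \eta$ for all $\xi,\eta$ in the closed ball — this is where the specific form $1 - \sqrt{1-|\xi|^2}$ matters, since the graph has no flat pieces and no line segments, even on the boundary sphere. A careful way to handle this is to use strict convexity of $t \mapsto 1 - \sqrt{1 - t^2}$ on $[0,1]$ together with convexity of $\xi \mapsto |\xi|$, noting that equality in the composed estimate forces both $|\nabla u_1| = |\nabla u_2|$ and $\nabla u_1, \nabla u_2$ parallel in the same direction a.e., hence equal. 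A minor secondary point is the boundary-trace argument: since functions in $\mathcal X_\Omega$ are Lipschitz on $\Omega$ and extend continuously to $\overline\Omega$, the constraint $v = \varphi$ on $\partial\Omega$ is meaningful in the classical sense and $u_1 - u_2 \equiv c$ with $c = 0$ on $\partial\Omega$ gives $c = 0$ throughout.
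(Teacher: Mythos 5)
Your proof is correct and follows essentially the same route as the paper's (which is adapted from Bartnik--Simon): strict convexity of $\xi\mapsto 1-\sqrt{1-|\xi|^2}$ along the segment between two minimizers forces $\nabla u_1=\nabla u_2$ a.e., and the common boundary datum then yields $u_1=u_2$. The only (immaterial) difference is the last step: the paper extends $u_1-u_2$ by zero to a Lipschitz function on $\mathbb R^N$ and concludes from $\nabla(u_1-u_2)=0$ a.e., whereas you invoke connectedness of the bounded domain $\Omega$ together with continuity up to $\partial\Omega$; both are valid.
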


\begin{proof} Although the argument is similar to \cite[Proposition 1.1]{BartSim}, we include it here for completeness. 
Let $u_1,\, u_2\in\mathcal X_\Omega$ be two solutions of \eqref{VP} and consider $u_t:=(1-t)u_1+t u_2$ for any $t\in(0,1)$. By the convexity of $1-\sqrt{1-|x|^2}$, we have 
\begin{equation}\label{Iut}
\begin{aligned}
I_{\Omega,\varrho}(u_t)&\le (1-t)\int_\Omega(1-\sqrt{1-|\nabla u_1|^2})dx+t\int_\Omega(1-\sqrt{1-|\nabla u_2|^2})dx\\
&\quad-(1-t)\langle\varrho_\Omega,u_1\rangle_{\mathcal X_\Omega}-t\langle\varrho_\Omega,u_2\rangle_{\mathcal X_\Omega}\\
&=(1-t)I_{\Omega,\varrho}(u_1)+tI_{\Omega,\varrho}(u_2)=I_{\Omega,\varrho}(u_1),
\end{aligned}
\end{equation}
where we used  $I_{\Omega,\varrho}(u_1)=I_{\Omega,\varrho}(u_2)=\min I_{\Omega,\varrho}$. By the minimality of $I_{\Omega,\varrho}(u_1)$, we have $I(u_t)=I(u_1)$, and so the equality must hold in \eqref{Iut}. Now, being $x \mapsto 1 - \sqrt{1 - |x|^2}$ strictly convex, we have 
$\nabla u_1=\nabla u_2$ a.e. in $\Omega$. Since $u_1=u_2$ on $\partial\Omega$, $u_1-u_2$ can be extended to a Lipschitz function on $\mathbb R^N$ that vanishes in $\mathbb R^N\setminus\Omega$, cf. \cite{BartSim}. 
Thus, being $\nabla (u_1-u_2)=0$ a.e. in $\Omega$, we have $u_1=u_2$ and the proof is concluded.    
\end{proof}

\begin{remark}\label{existence}
Concerning existence of a minimizer for \eqref{VP}, we observe that in the case under consideration $\varrho=\sum_{k=1}^na_k\delta_{x_k}$, it is immediate to see that for every $\Omega\subset \mathbb R^N\setminus\{x_1,\dots,x_n\}$, $u_\varrho|_\Omega$ minimizes $I_\Omega$
over $\mathcal C(u_\rho, \Omega)$, where we recall that $u_\varrho$ denotes the unique minimizer of $I_\varrho$ in all of $\mathbb R^N$, cf. Proposition \ref{Prop2.3}.
Indeed, let
$v \in \mathcal C(u_\rho, \Omega)$ and denote $\psi := v - u_\rho \in \mathcal C(0,\Omega)$ and $\tilde \psi$  Lipschitz continuation of $\psi$ 
that vanishes outside of 
$\Omega$. Then $u_\rho +  \tilde \psi\in\mathcal X$ and
the minimality of $u_\varrho$ yields
$$
\begin{aligned}
I(u_\varrho+\tilde\psi)&=\int_{\Omega}\left(1-\sqrt{1-|\nabla(u_\varrho+\tilde\psi)|^2}\right)dx+\int_{\mathbb R^N\setminus\Omega}\left(1-\sqrt{1-|\nabla u_\varrho|^2}\right)dx\\
&\phantom{=}-\sum_{k=1}^n a_ku_\varrho(x_k)\\
&\ge I(u_\varrho)=\int_{\mathbb R^N}\left(1-\sqrt{1-|\nabla u_\varrho|^2}\right)dx-\sum_{j=1}^n a_ku_\varrho(x_k)\,.
\end{aligned}
$$
Hence, 
$$
\int_\Omega\left(1-\sqrt{1-|\nabla(u_\varrho|_\Omega+\psi)|^2}\right)dx\ge \int_\Omega\left(1-\sqrt{1-|\nabla u_\varrho|_\Omega|^2}\right)dx
$$
or equivalently
$$
I_\Omega(v) = I_\Omega(u_\varrho|_\Omega+\psi)\ge I_\Omega(u_\varrho|_\Omega),
$$
which proves the claim by the arbitrariness of $v\in \mathcal C(u_\varrho,\Omega)$.
\end{remark}
\begin{definition} 
Let $\varrho_1,\,\varrho_2\in\mathcal X_\Omega^*$. We say that $\varrho_1\le\varrho_2$ if  $\langle \varrho_1, v\rangle_{\mathcal X_\Omega}\le\langle\varrho_2,v\rangle_{\mathcal X_\Omega}$ for all $v\in\mathcal X_\Omega$ with $v\ge0$.
\end{definition}

\begin{lemma}\label{comparison}
Let $\varrho_1,\,\varrho_2\in\mathcal X_\Omega^*$, $\varphi_1,\,\varphi_2:\partial\Omega\to\mathbb R$ be two bounded functions, $u_1\in \mathcal C(\varphi_1,\Omega)$ be the minimizer of $I_{\Omega,\varrho_1}$, and $u_2\in \mathcal C(\varphi_2,\Omega)$ be the minimizer of $I_{\Omega,\varrho_2}$. If $\varrho_2\le\varrho_1$, then 
$$u_2(x)\le u_1(x)+\sup_{\partial\Omega}(\varphi_2-\varphi_1)\quad\mbox{for all }x\in\Omega.$$
\end{lemma}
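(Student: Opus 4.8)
The plan is to adapt the classical comparison argument for minimizers of convex energies with a constraint on the gradient, reducing everything to a single admissible competitor built from $u_1$ and $u_2$ by a cut-off/truncation procedure. Set $c := \sup_{\partial\Omega}(\varphi_2-\varphi_1)$, which is finite since $\varphi_1,\varphi_2$ are bounded, and consider the open set $U := \{x\in\Omega : u_2(x) > u_1(x) + c\}$. The goal is to show $U = \emptyset$. On $\partial U\cap\Omega$ we have $u_2 = u_1 + c$, and on $\partial U\cap\partial\Omega$ we have, by definition of $c$, that $u_2 = \varphi_2 \le \varphi_1 + c = u_1 + c$, so in fact $\partial U\cap\partial\Omega$ contributes only where $u_2 = u_1 + c$ as well (and if $U$ touches $\partial\Omega$, the boundary values already match there). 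Thus $u_2 - c$ and $u_1$ agree on $\partial U$ in the trace sense.

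First I would define the competitors
\[
w_1 := \min\{u_1,\ u_2 - c\}, \qquad w_2 := \max\{u_2,\ u_1 + c\}.
\]
Both lie in $\mathcal X_\Omega$ (the minimum/maximum of two Lipschitz functions with $|\nabla\cdot|\le 1$ still has gradient bounded by $1$ a.e.), and by the boundary considerations above $w_1 \in \mathcal C(\varphi_1,\Omega)$ and $w_2 \in \mathcal C(\varphi_2,\Omega)$ — on $\partial\Omega$, $w_1 = \min\{\varphi_1,\varphi_2 - c\} = \varphi_1$ and $w_2 = \max\{\varphi_2,\varphi_1 + c\} = \varphi_2$. The key pointwise identity is that $\{w_1,w_2\}$ is a rearrangement of $\{u_1, u_2 - c\}$ at every point: where $u_1 \le u_2 - c$ one has $w_1 = u_1$, $w_2 = u_2$; where $u_1 > u_2 - c$ one has $w_1 = u_2 - c$, $w_2 = u_1 + c$. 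Hence, using that the Lagrangian $F(\xi) := 1 - \sqrt{1-|\xi|^2}$ depends only on $\nabla u$ (translation by the constant $c$ does not affect $\nabla$), and that $\varrho_i$ acts linearly,
\[
I_{\Omega,\varrho_1}(w_1) + I_{\Omega,\varrho_2}(w_2) = I_{\Omega,\varrho_1}(u_1) + I_{\Omega,\varrho_2}(u_2) + \langle \varrho_1 - \varrho_2,\ (u_2 - c - u_1)^+ \rangle_{\mathcal X_\Omega},
\]
where $(u_2 - c - u_1)^+ = \mathbf 1_U\cdot(u_2 - c - u_1) \ge 0$ is the amount by which the "exchange" moves mass between the two energies. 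Here I would track carefully how the splitting of $\int_\Omega F(\nabla w_1) + \int_\Omega F(\nabla w_2)$ over the two regions $\{u_1\le u_2-c\}$ and $\{u_1 > u_2-c\}$ matches $\int_\Omega F(\nabla u_1) + \int_\Omega F(\nabla u_2)$ exactly (on $U$ the gradients $\nabla u_1$ and $\nabla(u_2-c)=\nabla u_2$ just swap roles between $w_1$ and $w_2$), and how the linear terms recombine.

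Since $\varrho_2 \le \varrho_1$ means $\langle \varrho_1 - \varrho_2, v\rangle_{\mathcal X_\Omega}\ge 0$ for all $v\ge 0$ in $\mathcal X_\Omega$, the last term is nonnegative, so
\[
I_{\Omega,\varrho_1}(w_1) + I_{\Omega,\varrho_2}(w_2) \ge I_{\Omega,\varrho_1}(u_1) + I_{\Omega,\varrho_2}(u_2).
\]
On the other hand, $w_1$ and $w_2$ are admissible competitors, so by minimality $I_{\Omega,\varrho_1}(w_1) \ge I_{\Omega,\varrho_1}(u_1)$ and $I_{\Omega,\varrho_2}(w_2) \ge I_{\Omega,\varrho_2}(u_2)$; combined with the previous inequality these must be equalities. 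By the uniqueness statement of Lemma \ref{uniqueness} applied with boundary datum $\varphi_1$ and source $\varrho_1$ (respectively $\varphi_2$, $\varrho_2$), we conclude $w_1 = u_1$ and $w_2 = u_2$; the former forces $u_1 \le u_2 - c$ to fail everywhere, i.e. $u_2 \le u_1 + c$ on $\Omega$, as claimed.

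The main obstacle I anticipate is not the algebra but the membership and boundary-value bookkeeping for $w_1, w_2$: one must verify that $\min$/$\max$ of two competitors stays in $\mathcal X_\Omega$ with the gradient constraint intact (standard for Lipschitz functions), that the traces on $\partial\Omega$ come out as $\varphi_1$ and $\varphi_2$ exactly rather than merely $\le$, and — most delicately — that the exchange identity for the Lagrangian term holds a.e., including on the set $\{u_1 = u_2 - c\}$ where gradients of $u_1$ and $u_2-c$ need not coincide but where the relevant level set has the property that $\nabla u_1 = \nabla u_2$ a.e. on it (a.e. on a level set of a Lipschitz function the two gradients agree, or that set has measure zero). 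A secondary subtlety: one needs $(u_2 - c - u_1)^+ \in \mathcal X_\Omega$ to pair it against $\varrho_1 - \varrho_2$, which again follows from the Lipschitz/gradient-bound stability of truncation. If one prefers to avoid invoking Lemma \ref{uniqueness} at the very end, the equalities $I_{\Omega,\varrho_1}(w_1) = I_{\Omega,\varrho_1}(u_1)$ together with the strict convexity of $F$ already give $\nabla w_1 = \nabla u_1$ a.e., and since $w_1 - u_1 = -(u_2-c-u_1)^+ $ vanishes on $\partial\Omega$ and has zero gradient a.e., it vanishes identically — directly yielding $U = \emptyset$.
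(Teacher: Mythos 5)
Your overall strategy --- exchange/truncation competitors, convexity of the Lagrangian, the sign of $\varrho_1-\varrho_2$, and closing via minimality plus uniqueness --- is the same as the paper's, but as written the proof breaks at the admissibility step because your $\min$/$\max$ are taken on the wrong sides. Since $c=\sup_{\partial\Omega}(\varphi_2-\varphi_1)$, on $\partial\Omega$ one has $\varphi_2-c\le\varphi_1$; hence $w_1=\min\{u_1,u_2-c\}$ has trace $\varphi_2-c$, not $\varphi_1$, and $w_2=\max\{u_2,u_1+c\}$ has trace $\varphi_1+c$, not $\varphi_2$ (your claimed boundary identities would require the supremum to be attained at every boundary point). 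Consequently $w_1\notin\mathcal C(\varphi_1,\Omega)$ and $w_2\notin\mathcal C(\varphi_2,\Omega)$, so the two minimality inequalities $I_{\Omega,\varrho_1}(w_1)\ge I_{\Omega,\varrho_1}(u_1)$ and $I_{\Omega,\varrho_2}(w_2)\ge I_{\Omega,\varrho_2}(u_2)$, on which the whole argument rests, are unjustified. The same reversal surfaces twice more: with your choice the functions are altered on the complement of $U=\{u_2>u_1+c\}$, so the transferred quantity in the exchange identity is $(u_1+c-u_2)^+$ rather than $(u_2-c-u_1)^+$; and at the end, $w_1\equiv u_1$ with $w_1=\min\{u_1,u_2-c\}$ would mean $u_1\le u_2-c$ on all of $\Omega$, which is the opposite of $U=\emptyset$.

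The repair is to swap the roles: take $w_1:=\max\{u_1,u_2-c\}$ and $w_2:=\min\{u_2,u_1+c\}$. Then the traces are indeed $\varphi_1$ and $\varphi_2$, the modification occurs only on $U$, and the exchange identity becomes $I_{\Omega,\varrho_1}(w_1)+I_{\Omega,\varrho_2}(w_2)=I_{\Omega,\varrho_1}(u_1)+I_{\Omega,\varrho_2}(u_2)-\langle\varrho_1-\varrho_2,(u_2-c-u_1)^+\rangle_{\mathcal X_\Omega}$, whose extra term is now nonpositive (test with a small positive multiple of $(u_2-c-u_1)^+$ so that the gradient constraint of $\mathcal X_\Omega$ is respected); minimality then forces equality throughout, and Lemma \ref{uniqueness} (or strict convexity plus the vanishing boundary values of $w_2-u_2$) gives $w_2=u_2$, i.e. $u_2\le u_1+c$. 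This corrected version is essentially the paper's argument, which phrases it by shifting: $\tilde u_1:=u_1+c$ minimizes $I_{\Omega,\varrho_1}$ over $\mathcal C(\varphi_1+c,\Omega)$, and one compares $\max\{u_2,\tilde u_1\}$ against $\tilde u_1$ and $\min\{u_2,\tilde u_1\}$ against $u_2$.
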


\begin{proof} Throughout this proof we use the following simplified notation $$I_1:=I_{\Omega,\varrho_1},\quad I_2:=I_{\Omega,\varrho_2},\quad\langle\cdot,\cdot\rangle:=\langle\cdot,\cdot\rangle_{\mathcal X_\Omega},\quad \mathcal Q(u):=\int_\Omega(1-\sqrt{1-|\nabla u|^2})dx.$$
Let $\alpha:=\sup_{\partial\Omega}(\varphi_2-\varphi_1)$ and $\tilde u_1:=u_1+\alpha$. We claim that $\tilde u_1$ minimizes $I_1$ in $\mathcal C(\varphi_1+\alpha,\Omega)$. Indeed, since $u_1$ minimizes $I_1$ in $\mathcal C(\varphi_1,\Omega)$, for all $u\in \mathcal C(\varphi_1,\Omega)$ we have 
$$
\begin{aligned}
I_1(\tilde u_1)&=\mathcal Q(u_1)-\langle\varrho_1,u_1\rangle-\langle\varrho_1,\alpha\rangle\le I_1(u)-\langle\varrho_1,\alpha\rangle=I_1(u+\alpha).
\end{aligned}
$$
Since $\mathcal C(\varphi_1+\alpha,\Omega)=\mathcal C(\varphi_1,\Omega)+\alpha$, the claim is proved. 

Now, suppose by contradiction that the set $\Omega^+:=\{x\in\Omega\,:\,u_2(x)>\tilde u_1(x)\}$ is non-empty. 
Let $\Omega^-:=\Omega\setminus\Omega^+$,  
$$
\begin{gathered} \mathcal Q^+(u):=\int_{\Omega^+}(1-\sqrt{1-|\nabla u|^2})dx,\quad \mathcal Q^-(u):=\int_{\Omega^-}(1-\sqrt{1-|\nabla u|^2})dx \,, \\
U:= \max\{u_2, \tilde{u}_1\} =  \begin{cases}
\tilde u_1\quad&\mbox{in }\Omega^-\\
u_2&\mbox{in }\Omega^+,
\end{cases}\quad\mbox{and}\quad
V:= \min\{u_2, \tilde{u}_1\} = \begin{cases}
u_2\quad&\mbox{in }\Omega^-\\
\tilde u_1&\mbox{in }\Omega^+.
\end{cases}
\end{gathered}
$$
We observe that, by continuity, $u_2=\tilde u_1$ on $\partial \Omega^+$. Hence, $U\in \mathcal C(\varphi_1+\alpha,\Omega)$ and $V\in \mathcal C(\varphi_2,\Omega)$. Furthermore, the following relations hold in the whole of $\Omega$:
$$
u_2 - V = U-\tilde u_1\ge 0 .
$$ 
Then, by $\varrho_2\le \varrho_1$, we obtain
$$
\begin{aligned}
I_1(U)&=\mathcal Q(U)-\langle\varrho_1,U-\tilde u_1\rangle -\langle\varrho_1,\tilde{u}_1\rangle\\
&\le \mathcal Q(U)-\langle\varrho_2,U-\tilde u_1\rangle -\langle\varrho_1,\tilde{u}_1\rangle\\
&=\mathcal Q^+(u_2)+\mathcal Q^-(\tilde u_1)-\langle\varrho_2,U-\tilde u_1\rangle -\langle\varrho_1,\tilde{u}_1\rangle\\
&=I_1(\tilde u_1)-\mathcal Q^+(\tilde u_1)+\mathcal Q^+(u_2)-\langle\varrho_2,U-\tilde{u}_1\rangle\\
&=I_1(\tilde u_1)+I_2(u_2)-\mathcal Q^-(u_2)+\langle\varrho_2,u_2\rangle-\mathcal Q^+(\tilde u_1)-\langle\varrho_2,U-\tilde u_1\rangle\\
&=I_1(\tilde u_1)+I_2(u_2)-\mathcal Q(V) + \langle\varrho_2, V\rangle \\
&=I_1(\tilde u_1)+I_2(u_2)-I_2(V)\\
&<I_1(\tilde u_1),
\end{aligned}
$$
where in the last step we used the strict minimality of $I_2(u_2)$ over $\mathcal C(\varphi_2,\Omega)$, see Lemma \ref{uniqueness}. 
This contradicts the fact that $\tilde u_1$ minimizes $I_1$ in $\mathcal C(\varphi_1+\alpha,\Omega)$ and concludes the proof.
\end{proof}

\begin{theorem}\label{maxmin}
If $u_\varrho$ is the unique minimizer of the problem \eqref{VP}, then for every $k=1,\cdots,n$ one has
\begin{itemize}
\item[$(i)$] For every $x \in \mathbb{R}^N$ with $|x| = 1$, there exists $\displaystyle{\lim_{h\to 0^+}\frac{u_{\varrho}(hx+x_k)-u_{\varrho}(x_k)}h}= \pm 1$;
\item[$(ii)$] $x_k$ is a relative strict minimizer (resp. maximizer) of $u_\varrho$ if $a_k<0$ (resp. $a_k>0$). 
\end{itemize}
\end{theorem}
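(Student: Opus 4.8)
The plan follows the roadmap announced in the introduction: isolate a single charge, feed the localised problem into Ecker's classification of isolated singularities of area-maximising spacelike hypersurfaces, and use the comparison principle of Lemma~\ref{comparison} together with the explicit radial solution both to set this up and to get the lower bound. Here $u_\varrho$ denotes the global minimiser of $I$ in \eqref{I-superposition}. Since $u\mapsto -u$ sends a minimiser for the datum $\sum_k a_k\delta_{x_k}$ to a minimiser for $\sum_k(-a_k)\delta_{x_k}$ and interchanges maxima with minima, it suffices to treat $a_k>0$: we must then show that $\lim_{h\to0^+}\bigl(u_\varrho(x_k+hx)-u_\varrho(x_k)\bigr)/h=-1$ for every $|x|=1$, and that $x_k$ is a strict local maximum. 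Fix $\rho_0>0$ with $\overline{B_{\rho_0}(x_k)}\cap\{x_j:j\ne k\}=\emptyset$ and put $\Omega:=B_{\rho_0}(x_k)$. Extending any competitor in $\mathcal C(u_\varrho|_{\partial\Omega},\Omega)$ by $u_\varrho$ outside $\Omega$ changes $I$ only by an additive constant independent of the competitor, so (exactly as in Remark~\ref{existence}) $u_\varrho|_\Omega$ is the unique minimiser of $I_{\Omega,a_k\delta_{x_k}}$ over $\mathcal C(u_\varrho|_{\partial\Omega},\Omega)$; equivalently, the graph of $u_\varrho|_\Omega$ is an area-maximising spacelike hypersurface over $\Omega$ carrying a prescribed singularity of weight $a_k$ at $x_k$. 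The same holds with $\Omega$ replaced by any $B_r(x_k)$, $0<r\le\rho_0$.

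Next I would introduce the radial model. For $0<r\le\rho_0$ let $\phi_r$ be the radial solution of $-Q\phi=a_k\delta_0$ in $B_r(0)$ with $\phi_r=0$ on $\partial B_r(0)$, that is $\phi_r(y)=\int_{|y|}^{r}c_k\,(t^{2(N-1)}+c_k^2)^{-1/2}\,dt$ with $c_k:=a_k/\omega_{N-1}>0$. One checks directly (or invokes \cite{Ecker}) that $\phi_r$ is the unique minimiser of $I_{B_r(0),a_k\delta_0}$ over $\mathcal C(0,B_r(0))$, is radially strictly decreasing and strictly spacelike on $B_r(0)\setminus\{0\}$, and satisfies $\phi_r(0)=r+O(r^{2N-1})$ and $\lim_{h\to0^+}\bigl(\phi_r(hx)-\phi_r(0)\bigr)/h=-1$ for every $|x|=1$. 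Write $\underline\varphi_r:=\min_{\partial B_r(x_k)}u_\varrho$, $\bar\varphi_r:=\max_{\partial B_r(x_k)}u_\varrho$. Two applications of Lemma~\ref{comparison} on $B_r(x_k)$ with $\varrho_1=\varrho_2=a_k\delta_{x_k}$ — once with $u_1=u_\varrho|_{B_r(x_k)}$, $u_2=\underline\varphi_r+\phi_r(\cdot-x_k)$, and once with the roles reversed and boundary value $\bar\varphi_r$ — give
\[
\underline\varphi_r+\phi_r(x-x_k)\ \le\ u_\varrho(x)\ \le\ \bar\varphi_r+\phi_r(x-x_k),\qquad x\in B_r(x_k).
\]
Evaluating the left inequality at $x=x_k$ and combining with the Lipschitz bound $|u_\varrho(x)-u_\varrho(x_k)|\le|x-x_k|$ forces $\underline\varphi_r=u_\varrho(x_k)-r+O(r^{2N-1})$; hence for every $x$ with $0<|x-x_k|\le\rho_0$ one has $u_\varrho(x)\ge\underline\varphi_{|x-x_k|}=u_\varrho(x_k)-|x-x_k|+O(|x-x_k|^{2N-1})$, so that $\liminf_{x\to x_k}\bigl(u_\varrho(x)-u_\varrho(x_k)\bigr)/|x-x_k|\ge -1$ uniformly over directions, and in particular $u_\varrho$ is not ``upward conical'' at $x_k$.

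To close, since $a_k\ne0$ the point $(x_k,u_\varrho(x_k))$ is a genuine (non-removable) singularity of the area-maximising graph of $u_\varrho|_\Omega$. By Ecker's classification \cite{Ecker} its tangent cone at that point is a Lorentzian light cone, and — by the preceding step, or by the orientation of the field of a positive charge — it is the downward one $\{(x,u_\varrho(x_k)-|x-x_k|)\}$; consequently $u_\varrho$ is moreover smooth in a punctured neighbourhood of $x_k$ and
\[
\lim_{x\to x_k}\frac{u_\varrho(x)-u_\varrho(x_k)}{|x-x_k|}=-1
\]
uniformly over directions. This is exactly $(i)$, and it yields $u_\varrho(x)=u_\varrho(x_k)-|x-x_k|\,(1+o(1))<u_\varrho(x_k)$ for all $0<|x-x_k|<\delta$, which is $(ii)$. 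The case $a_k<0$ follows from the above applied to $-u_\varrho$, with charge $-a_k>0$.

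I expect the genuine difficulty to be the matching upper conical bound $\limsup_{x\to x_k}\bigl(u_\varrho(x)-u_\varrho(x_k)\bigr)/|x-x_k|\le -1$, i.e.\ ruling out that $x_k$ is a ``flat'' point of $u_\varrho$: this is \emph{not} delivered by the radial comparison above, since one has no a priori control showing that $\mathrm{osc}_{\partial B_r(x_k)}u_\varrho$ is $o(r)$, and it is precisely here that Ecker's geometric analysis is indispensable. A second delicate point is that the tangent-cone classification has to be applied at $x_k$ although the singular set $\Gamma$ of $u_\varrho$ accumulates at $x_k$ along the rays toward the other charges; in other words one must know — and the argument recovers this a posteriori — that no ``ridge'' of $u_\varrho$ passes through $x_k$. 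Finally, the non-local feature stressed in the introduction, that $u_\varrho$ vanishes at infinity, is what makes $x_k$ a true extremum in the first place, and it enters upstream through Theorem~\ref{Le4.1}, on which the whole local analysis rests.
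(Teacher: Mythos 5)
Your argument is correct and, in its core, coincides with the paper's: you localise around $x_k$ (the extension of Remark~\ref{existence} to a ball containing the single charge), recognise the graph of $u_\varrho$ as an area-maximising hypersurface with a non-removable isolated singularity, and invoke Ecker's classification \cite{Ecker} to get the light-cone behaviour, which delivers $(i)$ and the dichotomy ``strict max or strict min''. Where you genuinely deviate is the selection of the correct alternative in $(ii)$: you apply Lemma~\ref{comparison} twice with $\varrho_1=\varrho_2=a_k\delta_{x_k}$, comparing $u_\varrho$ with the explicit radial profile $\phi_r$ shifted by the extreme boundary values, and conclude that $\min_{\partial B_r(x_k)}u_\varrho\le u_\varrho(x_k)-r+O(r^{2N-1})$, which rules out the upward cone when $a_k>0$; the paper instead compares with the \emph{zero} solution using the ordering $a_k\delta_{x_k}\le 0$ (for $a_k<0$) to get $\sup_{B_{R/2}(x_k)}u_\varrho\le\sup_{\partial B_{R/2}(x_k)}u_\varrho$, contradicting a strict interior maximum. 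Your route buys a quantitative statement (the spherical minimum drops at unit rate), but it costs an extra ingredient you only assert, namely that $\phi_r$ is indeed the minimiser of $I_{B_r(0),a_k\delta_0}$ over $\mathcal C(0,B_r(0))$ (a standard convexity/Euler--Lagrange verification, in the spirit of Proposition~\ref{2points}, but it should be carried out since $|\nabla\phi_r|\to1$ at the origin); the paper's comparison with $0$ avoids this entirely. Two small remarks: your intermediate ``$\liminf\ge-1$'' is just the Lipschitz bound and carries no information --- the operative content of that step is only $\min_{\partial B_r(x_k)}u_\varrho<u_\varrho(x_k)$; and the reduction to $a_k>0$ via $u\mapsto-u$ is fine but unnecessary, since the same two-sided comparison works verbatim for either sign.
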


\begin{proof}$(i)$ For every $k=1,\dots,n$, fix $R_k>0$  such that $B_{R_k}(x_k)\cap\{x_1,\dots,x_n\}=\{x_k\}$, where $B_R(x)$ is an open ball of 
radius $R$ centered at $x$. 
Now, define 
$u_{\varrho,k}(x):=u_\varrho(x+x_k)-u_\varrho(x_k)$ for every $x\in B_{R_k}(0)$. Since $\nabla u_{\varrho,k}(x)=\nabla u_\varrho(x+x_k)$ and $x\in B_{R_k}(x_k)\setminus\{x_k\}$ iff  $x-x_k\in B_{R_k}(0)\setminus\{0\}$, by Remark \ref{existence} we obtain that for every $\Omega\subset B_{R_k}(0)\setminus\{0\}$, $u_{\varrho,k}|_\Omega$ minimizes the functional
$I_\Omega: \mathcal C(u_{\varrho,k}|_{\partial\Omega},\bar\Omega)\to\mathbb R$  defined by 
$$I_\Omega(u):=\int_\Omega(1-\sqrt{1-|\nabla u|^2})dx.$$
 
Hence, the graph of $u_{\varrho,k}|_{B_{R_k}(0)}$ is an area maximizing hypersurface in the Minkowski space having an isolated singularity at 0, in the sense of \cite[Definitions 0.2 and 1.1]{Ecker}. By \cite[Theorem 1.5]{Ecker}, we can conclude that 0 is a light-cone-like singularity in the sense of \cite[Definition~1.4]{Ecker}. This implies that, for  every $x\in B_{R_k/t}(0)$ with $|x|=1$, 
$$
\lim_{h\to 0^+}\frac{u_{\varrho,k}(hx)}h\quad\mbox{exists and}\quad\left|\lim_{h\to 0^+}\frac{u_{\varrho,k}(hx)}h\right|=1.
$$
Since $u_{\varrho,k}(0)=0$, this means that for every direction $x$, there exists one sided directional derivative of $u_{\varrho,k}$ along $x$ at 0 and its absolute value is 1, that is,  
$$
\lim_{h\to 0^+}\frac{u_{\varrho,k}(hx+0)-u_{\varrho,k}(0)}h\quad\mbox{exists and}\quad\left|\lim_{h\to 0^+}\frac{u_{\varrho,k}(hx+0)-u_{\varrho,k}(0)}h\right|=1,
$$
which concludes the proof of ($i$). 

$(ii)$ Since 0 is a light-cone-like singularity of $u_{\varrho,k}|_{B_{R_k}(0)}$, two cases may occur (cf. \cite[Definition~1.4 and Lemma 1.9]{Ecker}): either 
$$u_{\varrho,k}>0\quad\mbox{in } B_{R}(0)\setminus\{0\}$$
or 
$$u_{\varrho,k}<0\quad\mbox{in } B_{R}(0)\setminus\{0\}$$
for some $0<R<R_k$. As a consequence, either $x_k$ is a relative strict minimizer of $u_\varrho$ or $x_k$ is a relative strict maximizer of $u_\varrho$.

Now, in order to detect which situation occurs depending on the sign of $a_k$, we use the comparison principle proved in Lemma \ref{comparison}. 
If $a_k<0$, we set $\Omega:=B_{R/2}(x_k)$, $\varrho_1:=0$, $\varphi_1:=0$, $\varrho_2:=a_k\delta_{x_k}$, and $\varphi_2:=u_{\varrho}|_{\partial B_{R/2}(x_k)}$. Hence, $u_1=0$, $u_2=u_\varrho|_{B_{R/2}(x_k)}$, and $\varrho_2\le\varrho_1$. Then, by Lemma \ref{comparison}
\begin{equation}\label{supsup}\sup_{B_{R/2}(x_k)} u_\varrho\le \sup_{\partial B_{R/2}(x_k)} u_\varrho.\end{equation}
Suppose by contradiction that $x_k$ is a relative strict maximizer of $u_\varrho$ in $B_R(x_k)$, then 
$$u_\varrho(x_k)=\sup_{B_{R/2}(x_k)} u_\varrho> \max_{\partial B_{R/2}(x_k)} u_\varrho,$$
which contradicts \eqref{supsup}. Thus, $x_k$ is a relative strict minimizer of $u_\varrho$.
Analogously, it is possible to prove that when $a_k>0$, $x_k$ is a relative strict maximizer of $u_\varrho$. 
\end{proof}

In what follows we give an explicit quantitative sufficient condition on the charge values $a_k$'s and on the charge positions $x_k$'s for $u_\varrho$ to be a classical solution of  
\begin{equation}\label{general}
-\mathrm{div}\left(\frac{\nabla u}{\sqrt{1-|\nabla u|^2}}\right)=0
\end{equation}
in some subset of $\mathbb R^N\setminus\{x_1,\dots,x_n\}$. As mentioned in the introduction, our results complement the qualitative ones contained in \cite{BDP} 
(see Theorem \ref{Le4.1} above), stating that if the charges are sufficiently small in absolute value or far away from each other, then the minimizer solves the problem.  

First, we prove the following lemma. 

\begin{lemma}\label{firstlemma}
Let $N\ge3$. There exists a constant $C=C(N)>0$ such that 
\begin{equation}\label{emb}
\|\nabla u\|^2_{L^2(\mathbb R^N)}\ge C\|u\|^N_{L^\infty(\mathbb R^N)},
\end{equation}
for all $u\in\mathcal X$. The best constant 
$$
\bar C:= \inf_{u\in\mathcal X\setminus\{0\}}\frac{\|\nabla u\|^2_{L^2(\mathbb R^N)}}{\|u\|^N_{L^\infty(\mathbb R^N)}}
$$
is achieved by a radial and radially decreasing function.
\end{lemma}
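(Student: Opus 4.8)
The plan is to prove \eqref{emb} first, with an explicit (though non-optimal) constant, using only the Lipschitz bound $|\nabla u|\le 1$ together with the Sobolev embedding, and then to produce an extremal by symmetrization and the direct method. For the inequality, take $u\in\mathcal X\setminus\{0\}$ and set $M:=\|u\|_{L^\infty(\mathbb R^N)}$. Since $u$ is continuous and vanishes at infinity (Lemma~\ref{lemma2.1}(iii)), $|u|$ attains $M$ at some $x_0$, and, replacing $u$ by $-u$ if needed (which changes neither side of \eqref{emb}), I may assume $u(x_0)=M$. As $u$ is $1$-Lipschitz, $u\ge M-|x-x_0|\ge M/2$ on $B_{M/2}(x_0)$, so
$$
\|u\|_{L^{2^*}(\mathbb R^N)}^{2^*}\ge (M/2)^{2^*}\,|B_{M/2}(x_0)|=\frac{\omega_{N-1}}{N}\,(M/2)^{2^*+N},\qquad 2^*:=\frac{2N}{N-2}.
$$
Since $2+\frac{2N}{2^*}=N$, the Sobolev inequality $\|\nabla u\|_{L^2}^2\ge S_N\|u\|_{L^{2^*}}^2$ (with $S_N>0$ the Sobolev constant of $\mathcal D^{1,2}(\mathbb R^N)\hookrightarrow L^{2^*}(\mathbb R^N)$) yields \eqref{emb} with $C=C(N)=S_N(\omega_{N-1}/N)^{2/2^*}2^{-N}$; in particular $\bar C\ge C(N)>0$, while $\bar C<\infty$ is clear by testing any fixed nonzero element of $\mathcal X$.

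For the attainment I would first reduce to radial competitors. Given $u\in\mathcal X\setminus\{0\}$, let $v$ be whichever of $u^+$ and $u^-$ has the larger sup-norm; then $v\ge 0$, $v\not\equiv 0$, $\|v\|_{L^\infty}=\|u\|_{L^\infty}$, $v$ is $1$-Lipschitz and $|\nabla v|\le|\nabla u|$ a.e., so $v\in\mathcal X$ with a quotient no larger than that of $u$. Let $v^\ast$ be the Schwarz symmetrization of $v$. Then $\|v^\ast\|_{L^\infty}=\|v\|_{L^\infty}$, the P\'olya--Szeg\H{o} inequality gives $\|\nabla v^\ast\|_{L^2}\le\|\nabla v\|_{L^2}$, and $v^\ast$ is again $1$-Lipschitz: the inclusion $\{v>t+s\}+\overline{B_s(0)}\subseteq\{v>t\}$ (immediate from the Lipschitz bound) together with the Brunn--Minkowski inequality forces the radii $r(t)$ of the balls $\{v^\ast>t\}$ to satisfy $r(t+s)\le r(t)-s$, which is exactly the $1$-Lipschitz property of the radial profile of $v^\ast$ (equivalently, symmetric decreasing rearrangement does not enlarge the modulus of continuity). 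Hence $\bar C$ equals the infimum of the same quotient taken over radial, radially non-increasing functions of $\mathcal X\setminus\{0\}$.

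Finally I would apply the direct method along such a reduced minimizing sequence. The quotient is invariant under the rescaling $u\mapsto\lambda\,u(\cdot/\lambda)$, $\lambda>0$, which preserves membership in $\mathcal D^{1,2}(\mathbb R^N)$ and the bound $\|\nabla u\|_\infty\le 1$; using it I normalize $(u_n)$ so that $\|u_n\|_{L^\infty}=u_n(0)=1$. Then $\|\nabla u_n\|_{L^2}^2\to\bar C$ is bounded, so by Lemma~\ref{lemma2.1}(v) a subsequence converges weakly in $\mathcal D^{1,2}(\mathbb R^N)$ and locally uniformly to some $\bar u$, which is radial, radially non-increasing, $1$-Lipschitz (a locally uniform limit of $1$-Lipschitz maps), with $0\le\bar u\le 1$ and $\bar u(0)=1$; hence $\bar u\in\mathcal X\setminus\{0\}$ and $\|\bar u\|_{L^\infty}=1$. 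Weak lower semicontinuity of $\|\nabla\cdot\|_{L^2}^2$ gives $\|\nabla\bar u\|_{L^2}^2\le\liminf_n\|\nabla u_n\|_{L^2}^2=\bar C$, so $\bar u$ realizes $\bar C$ and is radial and radially decreasing.

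I expect the reduction step to be the delicate one: keeping the symmetrized competitor inside $\mathcal X$, i.e. verifying $|\nabla v^\ast|\le 1$, does not follow from P\'olya--Szeg\H{o} and must be drawn from Brunn--Minkowski (or quoted as a known property of rearrangements). A secondary but essential point is that, the functional being inhomogeneous, the normalization in the last step has to use the scale-invariant map $u\mapsto\lambda\,u(\cdot/\lambda)$ rather than $u\mapsto\lambda u$, so that the gradient constraint is not violated.
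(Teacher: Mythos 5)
Your proof is correct and follows essentially the same route as the paper: reduction to radial, radially non-increasing competitors via Schwarz symmetrization and P\'olya--Szeg\H{o}, normalization of a minimizing sequence by the scale-invariant map $u\mapsto\lambda u(\cdot/\lambda)$ so that $u_n(0)=\|u_n\|_{L^\infty}=1$, and then the direct method using Lemma~\ref{lemma2.1} (weak closedness, locally uniform convergence) and weak lower semicontinuity of the Dirichlet energy. Your two additions --- the explicit Sobolev-based lower bound giving $C(N)>0$ up front, and the Brunn--Minkowski argument showing that symmetrization preserves the constraint $\|\nabla v^\ast\|_\infty\le 1$ so that the competitor stays in $\mathcal X$ --- are both sound and in fact make explicit points the paper's proof leaves implicit.
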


\begin{proof} For all $u\in\mathcal X\setminus\{0\}$, we define the ratio
$$
\mathcal R(u):=\frac{\|\nabla u\|^2_{L^2(\mathbb R^N)}}{\|u\|^N_{L^\infty(\mathbb R^N)}}
$$ 
and we observe that for any $t > 0$ it is 
invariant under the transformation $\phi_t : \mathcal X  \to \mathcal X$, with $\phi_t(v) :=  tv(\cdot/t)$ for all $v\in\mathcal X$. 

Furthermore, fix $u\in\mathcal X\setminus\{0\}$ and denote by $u^\star$ the symmetric decreasing rearrangement of $|u|$ (see e.g. \cite[Chapter 3]{liebanalysis}). Then, $\|u\|_{L^\infty(\mathbb R^N)}=\|u^\star\|_{L^\infty(\mathbb R^N)}$ and $\|\nabla u\|_{L^2(\mathbb R^N)}\ge\|\nabla u^\star\|_{L^2(\mathbb R^N)}$ by the Polya-Szeg\H{o} inequality. Hence, $\mathcal R(u)\ge\mathcal R(u^\star)$. Therefore, if we denote by $\mathcal X_-^{\mathrm{rad}}$ the set of $\mathcal X$-functions which are radial and radially decreasing, then
$$\bar C=\inf_{u\in\mathcal X\setminus\{0\}}\mathcal R(u)=\inf_{u\in\mathcal X_-^{\mathrm{rad}}\setminus\{0\}}\mathcal R(u).$$

Finally, we prove the existence of a minimizer of $\mathcal R$. Let $(u_n)\subset\mathcal X_-^{\mathrm{rad}}\setminus\{0\}$ be a minimizing sequence. Without loss of generality we may assume that $u_n(0) = \|u_n\|_{L^\infty(\mathbb R^N)}=1$ for all $n\in\mathbb N$, otherwise we transform it by an appropriate $\phi_t$. 
Then, 
$\|\nabla u_n\|^2_{L^2(\mathbb R^N)}\to\bar C$, and in particular $(u_n)$ is bounded in $\mathcal X$. Hence, up to a subsequence, $u_n \rightharpoonup \bar u$ in $\mathcal X$ and  $u_n \to\bar u$ uniformly on compact sets of $\mathbb R^N$, by Lemma \ref{lemma2.1}. In particular, $\bar u\in \mathcal X_-^{\mathrm{rad}}$, $1=u_n(0)\to\bar u(0)$, and so $\|\bar u\|_{L^\infty(\mathbb R^N)}=1$. Therefore, the weak lower semicontinuity of the norm yields 
$$
\mathcal R(\bar u)=\int_{\mathbb R^N}|\nabla \bar u|^2dx\le \liminf_{n\to\infty}\int_{\mathbb R^N}|\nabla u_n|^2dx=\inf_{u\in\mathcal X\setminus\{0\}}\mathcal R(u),
$$
and so $\bar u$ is a minimizer.   
\end{proof}

\begin{remark}
The exponent $N$ appearing in the right-hand side of \eqref{emb} naturally arises from the fact that $\mathcal R$ is invariant under transformations $\phi_t$.   
\end{remark}

\begin{lemma}\label{secondlemma}
The best constant for inequality \eqref{emb} is given by 
\begin{equation}\label{bestconstant}
\bar C=\frac{2}{N}\left(\frac{N-2}{N-1}\right)^{N-1}\omega_{N-1}.
\end{equation}
\end{lemma}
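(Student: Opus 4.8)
The plan is to combine Lemma~\ref{firstlemma} with an explicit one-dimensional minimization. By Lemma~\ref{firstlemma} the infimum defining $\bar C$ is attained and is unchanged if we restrict to radial, radially decreasing functions; moreover the Rayleigh quotient $\|\nabla u\|_{L^2}^2/\|u\|_{L^\infty}^N$ is invariant under the rescalings $u\mapsto tu(\cdot/t)$, so after normalizing we may assume $u(0)=\|u\|_{L^\infty(\mathbb R^N)}=1$. Writing the radial profile again as $u=u(r)$ and using $\|\nabla u\|_{L^2(\mathbb R^N)}^2=\omega_{N-1}\int_0^\infty |u'(r)|^2 r^{N-1}\,dr$, the statement reduces to showing
$$\inf\left\{\int_0^\infty |u'(r)|^2 r^{N-1}\,dr\ :\ u(0)=1,\ u'\le0,\ \lim_{r\to\infty}u(r)=0,\ |u'|\le1\ \text{a.e.}\right\}=\frac{2}{N}\Big(\frac{N-2}{N-1}\Big)^{N-1},$$
the point being that $\bar C$ equals $\omega_{N-1}$ times this value. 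Note that any admissible $u$ satisfies $\int_0^\infty|u'(r)|\,dr=u(0)-\lim_{r\to\infty}u(r)=1$.

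For the lower bound I would introduce $\rho:=\frac{N-2}{N-1}\in(0,1)$ and $\lambda:=2\rho^{N-1}$, together with the pointwise inequality: for every $r>0$ and every $v\in[0,1]$,
$$v^2 r^{N-1}\ \ge\ \lambda v-h(r),\qquad h(r):=\begin{cases}\lambda-r^{N-1}&\text{if }0<r<\rho,\\[1mm]\dfrac{\lambda^2}{4r^{N-1}}&\text{if }r\ge\rho.\end{cases}$$
Indeed, on $(\rho,\infty)$ this is simply $r^{N-1}\big(v-\tfrac{\lambda}{2r^{N-1}}\big)^2\ge0$, while on $(0,\rho)$ it rearranges to $(1-v)\big(\lambda-(1+v)r^{N-1}\big)\ge0$, which holds because $v\le1$ and $r^{N-1}<\rho^{N-1}=\lambda/2$ there; equality holds exactly when $v=\min\{1,(\rho/r)^{N-1}\}$. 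Applying this with $v=|u'(r)|$, integrating over $(0,\infty)$, and using $\int_0^\infty|u'|\,dr=1$ and the integrability of $h$ (which requires $N\ge3$), one gets $\int_0^\infty|u'|^2 r^{N-1}\,dr\ge\lambda-\int_0^\infty h$. A direct computation yields $\int_0^\infty h=\lambda\rho-\frac{\rho^N}{N}+\frac{\lambda^2\rho^{2-N}}{4(N-2)}$, and substituting $\lambda=2\rho^{N-1}$ and $\rho=\frac{N-2}{N-1}$ collapses $\lambda-\int_0^\infty h$ to exactly $\frac{2}{N}\rho^{N-1}=\frac{2}{N}\big(\frac{N-2}{N-1}\big)^{N-1}$, whence $\bar C\ge\frac{2}{N}\big(\frac{N-2}{N-1}\big)^{N-1}\omega_{N-1}$.

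For the reverse inequality I would exhibit the extremal: the radial function $\bar u$ with $\bar u(r)=1-r$ for $0\le r\le\rho$ and $\bar u(r)=\frac{\rho^{N-1}}{N-2}\,r^{2-N}$ for $r\ge\rho$. It is continuous at $r=\rho$ (both expressions equal $1-\rho=\frac{1}{N-1}$), radially decreasing with $|\nabla\bar u(r)|=\min\{1,(\rho/r)^{N-1}\}\le1$, it vanishes at infinity, and it lies in $\mathcal D^{1,2}(\mathbb R^N)$ since $N\ge3$; thus $\bar u\in\mathcal X$ with $\|\bar u\|_{L^\infty(\mathbb R^N)}=\bar u(0)=1$. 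Because $|\bar u'(r)|$ realizes equality in the pointwise inequality above for a.e.\ $r$, the estimate of the previous paragraph becomes an identity for $\bar u$, so $\|\nabla\bar u\|_{L^2(\mathbb R^N)}^2=\frac{2}{N}\big(\frac{N-2}{N-1}\big)^{N-1}\omega_{N-1}$, which together with the lower bound proves \eqref{bestconstant}. The only delicate point is guessing the right pointwise inequality — essentially the Legendre transform of $v\mapsto v^2 r^{N-1}$ restricted to $[0,1]$, which must simultaneously account for the mass constraint $\int|u'|=1$ (through the multiplier $\lambda$) and for the gradient bound $|u'|\le1$ (through the truncation built into $h$), together with the observation that $h$ is integrable precisely in the regime $N\ge3$; once this is in place and the membership $\bar u\in\mathcal X$ is verified, the rest is routine algebra.
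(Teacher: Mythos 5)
Your proof is correct, and it takes a genuinely different route from the paper. The paper's argument is structural: after the symmetrization and scaling reduction of Lemma~\ref{firstlemma}, it studies an actual minimizer, shows by a measure-swapping argument that its derivative is non-decreasing, shows by local variations that it is harmonic wherever $|\bar u'|<-\,(-1)$ is not saturated, deduces the two-piece ansatz $1-r$ followed by $c_1r^{2-N}$, and finally minimizes the resulting one-parameter energy $E(R)$ to land at $\bar R=\frac{N-2}{N-1}$. You instead prove the sharp lower bound directly for \emph{every} admissible radial non-increasing profile via the pointwise inequality $v^2r^{N-1}\ge\lambda v-h(r)$ with $\lambda=2\rho^{N-1}$, $\rho=\frac{N-2}{N-1}$ (I checked the two cases, the integrability of $h$ for $N\ge3$, and the algebra giving $\lambda-\int_0^\infty h=\frac2N\rho^{N-1}$; all correct), combined with the mass identity $\int_0^\infty|u'|\,dr=1$, and then match it with the explicit test function whose derivative saturates the inequality a.e. This buys you two things: you do not need the existence of a minimizer at all (only the Polya--Szeg\H{o} reduction and the scaling normalization from Lemma~\ref{firstlemma}), and you avoid the somewhat delicate qualitative Steps 2--3 of the paper (monotonicity of $\bar u'$ and harmonicity of the tail), replacing them with a one-line verification of a dual certificate. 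The trade-off is exactly the one you point out: the paper's route \emph{derives} the shape of the extremal, whereas yours requires guessing the Legendre-transform-with-truncation certificate, whose multiplier $\lambda$ encodes the constraint $\int|u'|=1$ and whose truncation encodes $|u'|\le1$. Either way the value \eqref{bestconstant} comes out the same, so the proposal stands as a valid alternative proof of Lemma~\ref{secondlemma}.
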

\begin{proof}
In order to find the explicit value of $\bar C$, we will build by hands a minimizer of $\mathcal R$. 

{\it Step 1: The minimizer can be found in a smaller function space.} We first observe that if $u\in\mathcal X$, then 
$\lambda u\in\mathcal X$ if and only if $0<\lambda\le\|\nabla u\|^{-1}_{L^\infty(\mathbb R^N)}$. Moreover, for all $\lambda\in (0,\|\nabla u\|^{-1}_{L^\infty(\mathbb R^N)}]$
$$
\mathcal R(\lambda u)=\lambda^{2-N}\mathcal R(u)\ge \frac{1}{\|\nabla u\|^{2-N}_{L^\infty(\mathbb R^N)}}\mathcal R(u)=\mathcal R\left(\frac{u}{\|\nabla u\|_{L^\infty(\mathbb R^N)}}\right). 
$$
Then, set 
$$
\widetilde{\mathcal X}:=\{u\in\mathcal X^{\mathrm{rad}}_-\,:\,u\ge0\mbox{ and }\mathrm{ess sup}\, |u'|=\mathrm{ess sup }\, u=1\},
$$
where with abuse of notation we have written $u(r):=u(x)$ for $r=|x|$.
Together with Lemma \ref{firstlemma}, we have 
$$
\bar C=\inf_{u\in\widetilde{\mathcal X}\setminus\{0\}}\mathcal R(u).
$$

{\it Step 2: The minimizer has non-decreasing first derivative.} Let $\bar u\in\widetilde X$ be any minimizer of $\mathcal R$ and consider any two (measurable) sets $S_1, S_2 \subset (0, \infty)$ of positive Lebesgue measure such that $\sup S_1<\inf S_2$. 
For a contradiction assume that $\bar{u}' \leq B - \delta$ on $S_2$ and $0 \geq  \bar{u}'  \geq B + \delta$ on $S_1$ for some $B\in[-1,0)$ and $\delta\in(0,-B)$. 
Note that by making sets $S_1, S_2$ smaller 
if necessary (still of positive measure) we can assume that $\textrm{dist}(S_1, S_2) \geq \varepsilon$ and $S_1 \cup S_2$ is bounded.  
Since $S_1$ and $S_2$ have positive measure, it is standard to see 
that there exists a translation of $S_1$, denoted by $S_1 + k$ for some $k \geq \varepsilon$, such that $M_2 := (S_1 + k) \cap S_2$ has positive 
measure.
Denote $M_1 := M_2 - k$ and note that $M_1 \subset S_1$. 
Of course $M_1$ and $M_2$ are measurable, with positive measure. 

Define a new function 
\begin{equation*}
w'(r) :=
\begin{cases}
\bar{u}'(r + k) & r \in M_1 \\
\bar{u}'(r - k) & r \in M_2 \\
 \bar{u}'(r) & \textrm{otherwise} \,,
\end{cases}
\end{equation*}
that is, we exchange the values of $\bar{u}'$ on sets $M_1$ and $M_2$. Note that $w' \in L^2((0, \infty))$ and it is the derivative of the 
function $w(r) = 1 + \int_0^r w'(s) \, ds$, which is decreasing by Lemma \ref{firstlemma}, belongs to $L^2((0, \infty))$, and has $w (0) = 1$. Observe that $w \equiv \bar{u}$ 
outside of the convex hull of $S_1 \cup S_2$.  
Then, 
$$
\begin{aligned}
\|\nabla &\bar{u}\|_{L^2(\mathbb{R}^N)}^2 -  \|\nabla w \|_{L^2(\mathbb{R}^N)}^2 =
\int_0^\infty |\bar{u}'|^2 r^{N - 1} \, dr - \int_0^\infty |w'|^2 r^{N - 1} \, dr \\
& = \int_{M_1} ( |\bar{u}'(r)|^2 -  |\bar{u}'(r + k)|^2) r^{N - 1} \, dr + 
\int_{M_2} ( |\bar{u}'(r)|^2 -  |\bar{u}'(r - k))|^2) r^{N - 1} \, dr \\
 &= \int_{M_1} (|\bar{u}'(r + k)|^2 -  |\bar{u}'(r))|^2) [(r + k)^{N - 1} - r^{N - 1}] \, dr > 0 \,,
\end{aligned}
$$
a contradiction to $\bar{u}$ being a minimizer. Note that we used that for $r \in M_1$ one has $r + k \in M_2$, and consequently since $B < 0$,
$|\bar{u}'(r + k)|^2 \geq (B - \delta)^2  >  (B + \delta)^2 \geq |\bar{u}'(r)|^2$. Moreover, 
 $k \geq \varepsilon > 0$ and the strict inequality follows. 
By the arbitrariness of $0<\delta<-B$, we obtain that $\bar{u}'$ is a non-decreasing function.

{\it Step 3: The minimizer is harmonic outside the set of points of -1 derivative.} Denote $R := \sup \{r \in (0, 1) : \bar{u}'(r) = -1 \}$ and set $R = 0$ if $\bar{u}'(r) > -1$ for each $r > 0$.  Fix any $\varepsilon > 0$
and note that $B := \bar{u}'(R + \varepsilon) > -1$. Therefore, $\bar{u}'(r) \geq B > -1$ on $(R + \varepsilon, \infty)$.  

In order to prove that at points $r$ where $\bar u'(r)\neq -1$, $\bar u$ is harmonic, fix any smooth  $\psi\in C^1_{\mathrm{c}}((R + \varepsilon, \infty))$ and note that for sufficiently small (in absolute value)
$\xi$, one has $(\bar{u} + \xi \psi)' \geq -1$. Then, by the minimality of $\bar{u}$,
\begin{equation*}
0 \geq \int_0^\infty |\bar{u}'|^2 r^{N - 1} \, dr - \int_0^\infty |\bar{u}' + \xi \psi'|^2 r^{N - 1} \, dr = - \xi \int_0^\infty (2\bar{u}' \psi ' + \xi |\psi'|^2) r^{N - 1} \, dr  \,.
\end{equation*}  
Since $|\xi|\ll 1$ is arbitrarily small, positive or negative, we obtain 
\begin{equation*}
0 =  \int_0^\infty \bar{u}' \psi '  r^{N - 1} \, dr = - \int_0^\infty (\bar{u}'   r^{N - 1})' \psi  \, dr \,.
\end{equation*}
By the arbitrariness of $\psi$, this implies that $ (\bar{u}'   r^{N - 1})' = 0$ a.e. in $(R + \varepsilon, \infty)$, which in turn gives that $\bar{u}$ is harmonic in $(R,\infty)$, because $\varepsilon > 0$ is arbitrary.

{\it Step 4: The explicit form of a minimizer.} Altogether, we have proved that a minimizer $\bar u$ of $\mathcal R$ can be taken of the form 
$$
\bar u(r)=
\begin{cases}
1-r\quad&\mbox{if }r\in(0,R),\\
c_1r^{2-N}+c_2&\mbox{if }r\in[R,\infty)
\end{cases}
$$
for suitable constants $c_1,\,c_2>0$ and $R\ge0$.  Since $\lim_{r\to\infty}\bar u(r)=0$, $c_2=0$ and 
since $r^{2 - N}$ is unbounded at $0$, we have $R > 0$ and clearly $R \leq 1$. Moreover, $\bar u$ is continuous and $|\bar u'|\le1$, that is
$$
c_1=R^{N-2}(1-R)\quad\mbox{and}\quad c_1\le\frac{R^{N-1}}{N-2}.
$$   
Consequently, $R \geq \frac{N - 2}{N - 1}$. 
Now, we minimize  $\|\nabla \bar u\|_{L^2(\mathbb R^N)}^2$ as a function of $R$, or equivalently we minimize
$$\begin{aligned}E(R)&:=\int_0^{+\infty}\bar u'^2(r)r^{N-1}dr=\int_0^Rr^{N-1}dr+\int_R^{+\infty}c_1^2(N-2)^2r^{1-N}dr \,.
\end{aligned}$$
Using the bound on $c_1$ we have 
\begin{equation}
E'(R) = R^{N-1} - c_1^2(N - 2)^2 R^{1 - N} \geq 0 \,,
\end{equation}
and therefore $E$ is a non-decreasing function. Thus, the minimum is attained at $\bar R:=\frac{N-2}{N-1}$ and since 
$\bar C= E(\bar R)\omega_{N-1}$, we obtain the desired assertion. 
\end{proof}

We are now ready to prove the Theorem \ref{prop>2charges}. 
Let $\varrho=\sum_{k=1}^n a_k\delta_{x_k}$ and
$$
\begin{gathered}
\mathcal K_+:=\{k\in\mathbb N\,:\, 1\le k\le n\mbox{ and } a_k>0\},\\
\mathcal K_-:=\{k\in\mathbb N\,:\, 1\le k\le n\mbox{ and } a_k<0\}.
\end{gathered}
$$

\begin{proof}[$\bullet$ Proof of Theorem \ref{prop>2charges}]
Without loss of generality assume $j\in\mathcal K_+$ and $l\in\mathcal K_-$. Let $u_{\pm}\in\mathcal X\setminus\{0\}$ be the unique minimizers of 
$$
I_{\pm}(u):=\int_{\mathbb R^N}(1-\sqrt{1-|\nabla u|^2})dx-\sum_{k\in \mathcal K_{\pm}}a_k u(x_k),
$$
respectively. By Proposition \ref{Prop2.3}
\begin{equation}\label{0<Ipm}
0>I_{\pm}(u_\pm)\ge \frac{1}{2}\|\nabla u_{\pm}\|^2_{L^2(\mathbb R^N)}-\left(\sum_{k\in\mathcal K_\pm}|a_k|\right)\|u_\pm\|_{L^\infty(\mathbb R^N)},
\end{equation}
where we have used the inequality $\frac{1}{2}t\le 1-\sqrt{1-t}$ for $t\in[0,1]$.  
On the other hand, by Lemma \ref{firstlemma}, we have 
$$
\|\nabla u_\pm\|^2_{L^2(\mathbb R^N)}\ge \bar C \|u_\pm\|^N_{L^\infty(\mathbb R^N)}.
$$
Together with \eqref{0<Ipm} this gives
\begin{equation}\label{roughpm}
\|u_\pm\|_{L^\infty(\mathbb R^N)}\le\left(\frac{2}{\bar C}\sum_{k\in\mathcal K_\pm}|a_k|\right)^{\frac1{N-1}} 
\end{equation}
and in particular 
\begin{equation}\label{estimateupmxk}
\pm u_\pm(x_j)=|u_\pm(x_j)|\le \left(\frac{2}{\bar C}\sum_{k\in\mathcal K_\pm}|a_k|\right)^{\frac1{N-1}}\quad\mbox{for all }j\in\{1,\dots n\}, 
\end{equation}
since $u_+\ge 0$ and $u_-\le 0$ in all of $\mathbb R^N$, by the Comparison Principle \cite[Lemma~2.12]{BDP}.
By the same principle, we also know that 
$$
u_-(x)\le u_{\varrho}(x)\le u_+(x)\qquad\mbox{for all }x\in\mathbb R^N.
$$
Hence, by \eqref{estimateupmxk}, \eqref{hpjl}, and \eqref{bestconstant}
\begin{equation}\label{uajl}
\begin{aligned}
u_\varrho(x_j)-u_\varrho(x_l)&\le u_+(x_j)- u_-(x_l)\\
&\le \left(\frac{2}{\bar C}\sum_{k\in\mathcal K_+}|a_k|\right)^{\frac1{N-1}}+\left(\frac{2}{\bar C}\sum_{k\in\mathcal K_-}|a_k|\right)^{\frac1{N-1}}\\
&< \min_{\underset{h\neq i}{h,i\in\{1,\dots,n\}}}|x_h - x_i|\le |x_j-x_l|.
\end{aligned}
\end{equation} 
By Theorem \ref{Le4.1} either $u_\varrho$ is smooth on $\mathrm{Int}(\overline{x_jx_l})$, or  
\begin{equation}\label{segmentjl}
u_\varrho(tx_l+(1-t)x_j)=t u_\varrho(x_l)+(1-t)u_\varrho(x_j)\quad\mbox{for all }t\in(0,1).
\end{equation}
For a contradiction assume \eqref{segmentjl}. Then,  Theorem~\ref{maxmin} yields that $x_j$ is a strict relative maximizer and
$$
\lim_{t\to0^+}\frac{u_\varrho(t(x_l-x_j)+x_j)-u_\varrho(x_j)}{t|x_l-x_j|}=-1.
$$
By \eqref{segmentjl}, this gives immediately 
\begin{equation}\label{onthesegmentjl}
\frac{u_\varrho(x_l)-u_\varrho(x_j)}{|x_l-x_j|}=-1.
\end{equation}
Whence, together with \eqref{uajl}, we have 
$$
-|x_l-x_j|<u_\varrho(x_l)-u_\varrho(x_j)=-|x_l-x_j|,
$$
a contradiction. 
We can now repeat the same argument for all the couples of point charges and conclude the proof.
\end{proof}

\begin{remark}\label{3.8} By \eqref{uajl} it is apparent that under the weaker assumption
$$
\left(\frac{N}{\omega_{N-1}}\right)^{\frac{1}{N-1}}\frac{N-1}{N-2}\left[\left(\sum_{k\in \mathcal K_+}a_k\right)^{\frac{1}{N-1}}+\left(\sum_{k\in \mathcal K_-}|a_k|\right)^{\frac{1}{N-1}}\right] <|x_j-x_l|,
$$
we get the result (i.e., $u_\varrho$ is a classical solution) only along the line segment $\mathrm{Int}(x_jx_l)$.
 
Furthermore, it is possible to refine \eqref{roughpm}, and consequently the sufficient condition \eqref{hpjl}, by replacing \eqref{emb} with the following inequality 
\begin{equation}\label{refinedemb}
\int_{\mathbb R^N} \left(1-\sqrt{1-|\nabla u|^2}\right)dx\ge \widetilde C\|u\|^N_{L^\infty(\mathbb R^N)}\quad\mbox{for all }u\in\mathcal X
\end{equation}
and for some $\widetilde C=\widetilde C(N)\ge \frac{\bar C}2$. Indeed, suppose we have already proved \eqref{refinedemb}. Starting as in the proof of Theorem \ref{prop>2charges}, we have
$$
0>I_{\pm}(u_\pm)\ge \int_{\mathbb R^N}\left(1-\sqrt{1-\|\nabla u_{\pm}\|^2}\right) dx-\left(\sum_{k\in\mathcal K_\pm}|a_k|\right)\|u_\pm\|_{L^\infty(\mathbb R^N)}
$$
that, combined with \eqref{refinedemb}, gives
$$
\|u_\pm\|_{L^\infty(\mathbb R^N)}\le\left(\frac{1}{\widetilde C}\sum_{k\in\mathcal K_\pm} |a_k|\right)^{\frac{1}{N-1}}.
$$
Hence, it is enough to require 
\begin{equation}\label{moreprecise}
\widetilde C^{-\frac{1}{N-1}}\left[\left(\sum_{k\in\mathcal K_+}a_k\right)^{\frac{1}{N-1}}+\left(\sum_{k\in\mathcal K_-} |a_k|\right)^{\frac{1}{N-1}}\right] 
< |x_j-x_l|
\end{equation}
(which is a weaker assumption than \eqref{hpjl}, since $\widetilde C^{-\frac1{N-1}}\le (\bar C/2)^{-\frac1{N-1}}$) to conclude the statement of Theorem \ref{prop>2charges}.
As in Lemma \ref{firstlemma} (see also \cite{bonheure2012infinitely}) we can show that 
$\widetilde C$ is attained by the unique weak solution $\tilde u$ of the problem 
$$
\begin{cases}-\mathrm{div}\left(\frac{\nabla u}{\sqrt{1-|\nabla u|^2}}\right)=a\delta_0\quad\mbox{in }\mathbb R^N,\\
\lim_{|x|\to\infty}u(x)=0
\end{cases}
$$
with $a:=A(N)^{1-N}$ and 
\begin{equation}\label{AN}
A(N):=\omega_{N-1}^{-\frac1{N-1}}\int_0^{+\infty}\frac{ds}{\sqrt{s^{2(N-1)}+1}},
\end{equation}
cf. \cite[Theorem 1.4]{BDP}. 
Such $\tilde{u}$ is radial and radially decreasing, and the previous problem in radial coordinates reads as 
$$
\begin{cases}
\left(r^{N-1}\frac{u'}{\sqrt{1-(u')^2}}\right)'=0\quad\mbox{in }(0,\infty),\\
u(0)=1, \quad\lim_{r\to\infty}u(r)=0,
\end{cases}
$$
where as usual we have written $u(r):=u(x)$ for $r=|x|$. Therefore, 
$$
\tilde u(r)=\int_r^{+\infty}\frac{a/\omega_{N-1}}{\sqrt{s^{2(N-1)}+(a/\omega_{N-1})^2}}ds, 
$$
see below for a similar argument. Hence,
\begin{equation}\label{Ctilde}
\begin{aligned}
\widetilde C&=\omega_{N-1}\int_0^\infty r^{N-1}\left(1-\sqrt{1-(\tilde u'(r))^2}\right)dr\\
&=\omega_{N-1}\frac{\displaystyle \int_0^\infty r^{N-1}\left(1-\frac{r^{N-1}}{\sqrt{r^{2(N-1)}+1}}\right)dr}{\left(\displaystyle \int_0^\infty\frac1{\sqrt{r^{2(N-1)}+1}}dr\right)^N}.
\end{aligned}
\end{equation}
We can numerically check that, for example when $N=3$, 
$$\bar C=\frac{\omega_2}{6}\le 2\widetilde C\,\approx 2\cdot 0,097\,\omega_2.$$
\end{remark}

To end this section, we consider the case of two point charges of different sign, namely
\begin{equation}\label{a12}
\varrho:=a_1\delta_{x_1}+a_2\delta_{x_2},
\end{equation}
with $a_1\cdot a_2<0$. In this case, we can give a more precise sufficient condition. 

\begin{proposition}\label{2points} Let $\varrho$ be as in \eqref{a12}. If $a_1\cdot a_2 < 0$ and 
$$
\left(|a_1|^{\frac1{N-1}}+|a_2|^{\frac1{N-1}}\right)A(N) < |x_1-x_2|,
$$
where $A(N)$ is defined in \eqref{AN}, then $u_\varrho\in C^\infty(\mathbb R^N\setminus\{x_1,x_2\})\cap C(\mathbb R^N)$, it is a classical solution of \eqref{general} and it is strictly spacelike in $\mathbb R^N\setminus\{x_1,x_2\}$.
\end{proposition}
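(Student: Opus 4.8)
The plan is to reproduce the argument of the proof of Theorem~\ref{prop>2charges}, the only new point being that when there is exactly one charge of each sign the auxiliary functions $u_\pm$ are \emph{single--charge} minimizers, whose $L^\infty$ norm can be evaluated in closed form, and this produces the sharp constant $A(N)$ in place of the cruder bound \eqref{roughpm}. Without loss of generality assume $a_1>0>a_2$. Let $u_+$ and $u_-$ be the unique nonzero minimizers of $I_{a_1\delta_{x_1}}$ and $I_{a_2\delta_{x_2}}$ over $\mathcal X$, furnished by Proposition~\ref{Prop2.3}. Since each of them has a single Dirac source, $u_\pm$ is radial and radially monotone about its charge, and away from the source the Euler--Lagrange equation reduces to $\big(r^{N-1}u'/\sqrt{1-(u')^2}\big)'=0$; integrating exactly as in Remark~\ref{3.8} (cf. \cite[Theorem~1.4]{BDP}) one gets
$$u_+(x_1+x)=\int_{|x|}^{+\infty}\frac{a_1/\omega_{N-1}}{\sqrt{s^{2(N-1)}+(a_1/\omega_{N-1})^2}}\,ds,$$
and the substitution $s=(a_1/\omega_{N-1})^{1/(N-1)}t$ yields $\|u_+\|_{L^\infty(\mathbb R^N)}=u_+(x_1)=a_1^{1/(N-1)}A(N)$ with $A(N)$ as in \eqref{AN}; symmetrically $\|u_-\|_{L^\infty(\mathbb R^N)}=-u_-(x_2)=|a_2|^{1/(N-1)}A(N)$. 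Comparing $u_\pm$ with the trivial minimizer of $I_0$ also gives $u_+\ge0\ge u_-$ on $\mathbb R^N$.

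Next I would invoke the comparison principle \cite[Lemma~2.12]{BDP}: since $a_2\delta_{x_2}\le\varrho\le a_1\delta_{x_1}$ in the ordering of $\mathcal X^*$, we obtain $u_-\le u_\varrho\le u_+$ pointwise on $\mathbb R^N$, and hence, by the hypothesis of the proposition,
$$u_\varrho(x_1)-u_\varrho(x_2)\le u_+(x_1)-u_-(x_2)=\big(|a_1|^{1/(N-1)}+|a_2|^{1/(N-1)}\big)A(N)<|x_1-x_2|.$$

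By Theorem~\ref{Le4.1}$(iii)$ (here $n=2$, so $\Gamma=\overline{x_1x_2}$), either $u_\varrho$ is a classical solution of \eqref{general} on $\mathrm{Int}(\overline{x_1x_2})$, or $u_\varrho$ is affine along $\overline{x_1x_2}$. In the second case Theorem~\ref{maxmin} gives that $x_1$ is a relative strict maximizer of $u_\varrho$ and that $\lim_{t\to0^+}\big(u_\varrho(t(x_2-x_1)+x_1)-u_\varrho(x_1)\big)/\big(t|x_1-x_2|\big)=-1$; combined with the affinity on the segment this forces $u_\varrho(x_1)-u_\varrho(x_2)=|x_1-x_2|$, contradicting the displayed strict inequality. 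Therefore the first alternative holds, so $u_\varrho$ solves \eqref{general} classically on $(\mathbb R^N\setminus\Gamma)\cup\mathrm{Int}(\overline{x_1x_2})=\mathbb R^N\setminus\{x_1,x_2\}$; the regularity $u_\varrho\in C^\infty(\mathbb R^N\setminus\{x_1,x_2\})\cap C(\mathbb R^N)$ and strict spacelikeness on $\mathbb R^N\setminus\{x_1,x_2\}$ then follow from Theorem~\ref{Le4.1}$(i)$–$(ii)$ together with interior elliptic regularity for \eqref{general}.

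I do not expect a serious obstacle: the dichotomy-and-contradiction mechanism is exactly the one already used for Theorem~\ref{prop>2charges}. The only step requiring care is the exact evaluation of $\|u_\pm\|_{L^\infty}$, which rests on the radial symmetry of the single--charge minimizers (so that the PDE becomes the first order ODE above and can be integrated explicitly); this symmetry and the resulting profile are the facts to quote precisely from \cite{BDP} and Remark~\ref{3.8}. Everything else is a routine verification.
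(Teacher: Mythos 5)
Your proposal is correct and follows essentially the same route as the paper: identify the single-charge radial solutions, evaluate their sup-norms explicitly as $|a_k|^{1/(N-1)}A(N)$, squeeze $u_\varrho$ between them via the comparison principle of \cite[Lemma 2.12]{BDP}, and then run the same dichotomy-and-contradiction argument (Theorem \ref{Le4.1}(iii) plus Theorem \ref{maxmin}) used for Theorem \ref{prop>2charges}. The only cosmetic difference is that the paper re-derives the first-order ODE and the constant $C=-a_k/\omega_{N-1}$ inside the proof, whereas you quote the explicit profile from Remark \ref{3.8} and \cite[Theorem 1.4]{BDP}; the substance is identical.
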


\begin{proof}
It is standard to prove that for $k=1,\,2$ the unique solution $\tilde u_k$ of 
\begin{equation}\label{fond-k}
-\mathrm{div}\left(\frac{\nabla u}{\sqrt{1-|\nabla u|^2}}\right)=a_k\delta_{x_k}\qquad\mbox{in }\mathbb R^N,
\end{equation}
with $\lim_{|x|\to\infty}u=0$, is radial about $x_k$ and satisfies 
\begin{equation}\label{radial}
\frac{r^{N-1}\tilde u_k'(r)}{\sqrt{1-\tilde u_k'(r)^2}}=C\quad\mbox{in }\mathbb R^N\setminus\{x_k\}\mbox{ for some $C\in\mathbb R$,}
\end{equation}
where with abuse of notation  $\tilde u_k(r)=\tilde u_k(|x-x_k|)$ and $'$ denotes the derivation with respect to $r:=|x-x_k|$. In particular, by \eqref{radial}, 
 $\tilde u'_k$ never changes sign, and therefore $\tilde u_k$ is monotone in $r$.
Since $\tilde u_k$ vanishes at infinity, by \eqref{radial}  
we obtain
$$
\begin{aligned}
-C\tilde u_k(0)&=C\left(\lim_{r\to+\infty}\tilde u_k(r)-\tilde u_k(0)\right)=C\int_0^{+\infty}\tilde u'_k(r)dr\\
&=\int_0^{+\infty}\frac{r^{N-1}\tilde u_k'(r)^2}{\sqrt{1-\tilde u_k'(r)^2}}dr=\frac{a_k}{\omega_{N-1}} \tilde u_k(0).
\end{aligned}
$$
Since $\tilde u_k$ is monotone in $r$, $a_k\neq0$, and $\lim_{r\to\infty}\tilde u_k=0$, we have that $\tilde u_k(0)\neq0$, whence $C=-a_k/\omega_{N-1}$.
Furthermore, by solving for $\tilde u_k'$ in \eqref{radial} and integrating we have 
$$
\tilde u_k(r)=\int_r^{+\infty}\frac{a_k/\omega_{N-1}}{\sqrt{s^{2(N-1)}+(a_k/\omega_{N-1})^2}}ds\qquad\mbox{for }k=1,\,2, 
$$
and in particular 
\begin{equation}\label{fondsolk}
\tilde u_k(0)=\mathrm{sign}(a_k)|a_k|^{\frac{1}{N-1}}A(N)\qquad\mbox{for }k=1,\,2.
\end{equation}

Since $a_1>0>a_2$, $a_2\delta_{x_2}\le \varrho\le a_1\delta_{x_1}$ (cf. \cite[Definition~2.11]{BDP}). By the Comparison Lemma 2.12 of \cite{BDP}, we know that 
$$
\tilde u_2(x)\le u_{\varrho}(x)\le \tilde u_1(x)\qquad\mbox{for all }x\in\mathbb R^N.
$$
The conclusion follows exactly as in Theorem \ref{prop>2charges}. 
%
\end{proof}

\section{Approximating problem}\label{Sec3}
In this section we study some qualitative properties of the approximating solutions $u_m$ of the problem 
\eqref{Pa}. In particular, we focus on the regularity of $m_m$ in Proposition \ref{reg} and on their local behavior near the singularities $x_k$'s, proving Theorem \ref{approx_nabla}. From these results, it is apparent that $u_m$'s behavior resembles the behavior of the minimizer $u_\varrho$ that we approximate, see also the introduction for more comments.
 
\begin{proof}[$\bullet$ Proof of Proposition \ref{reg}] Let us denote 
$$\begin{aligned}&A(p):=\sum_{h=1}^m {\alpha_h}|p|^{2h-2}p,\\ 
&a^{ij}(p):=\frac{\partial A_i}{\partial p_j}=\sum_{h=1}^m {\alpha_h}\left[(2h-2)|p|^{2h-4}p_ip_j+|p|^{2h-2}\delta_{ij}\right],\\
&F(t):=\sum_{h=1}^m{\alpha_h}t^{2h-2}
\end{aligned}$$
for every $p\in\mathbb R^N$ and $t\ge0$, where $\delta_{ij}$ is the Kronecker delta. 
Then, by straightforward calculations we have for all $p,\,\xi\in\mathbb R^N$ 
$$
\begin{aligned}
&\sum_{i,j=1}^Na^{ij}(p)\xi_i\xi_j=\left(\sum_{h=1}^m\alpha_h|p|^{2h-2}\right)|\xi|^2+(p\cdot\xi)^2\sum_{h=1}^m\alpha_h(2h-2)|p|^{2h-4}\ge F(|p|)|\xi|^2,\\
&|a^{ij}(p)|\le \sum_{h=1}^m\alpha_h|p|^{2h-2}+\sum_{h=1}^m  \alpha_h(2h-2)|p|^{2h-2}\le (2m-1)F(|p|),\\
&|A(p)|=  \sum_{h=1}^m\alpha_h|p|^{2h-2} |p| =|p|F(|p|).
\end{aligned}
$$
Therefore, the operator $-\sum_{h=1}^m\alpha_h\Delta_{2h}$ and the function $F$ satisfy the hypotheses of \cite[Lemma~1]{Lieberman} with 
$\Lambda=(2m-1)$. 
To verify the last assumption in \cite[Lemma~1]{Lieberman},  let $u_m$ be the solution of \eqref{Pa}. Since $2m>\max\{N,2^*\}$,  
one has $\mathcal X_{2m} \hookrightarrow C^{0,\beta_m}_0(\mathbb R^N)$ and in particular, $u_m \in \mathcal X_{2m}$ is bounded. 
Let $B_{4R}$ be any ball of radius $4R$, such that $x_k\not\in B_{4R}$ for any $k=1, \cdots, n$. Then  $u_m$ satisfies 
$$
-\mathrm{div}\left(\sum_{h=1}^m{\alpha_h}|\nabla u_m|^{2h-2}\nabla u_m\right)=0\quad	\mbox{in }B_{4R}\quad\mbox{in the weak sense,}
$$
and since $u_m \in \mathcal X_{2m}$,
$$
\int_{B_{4R}}F(|\nabla u_m|)(1+|\nabla u_m|)^2dx<\infty.
$$
Therefore, by \cite[Lemma 1]{Lieberman}, $u_m\in C^{1,\beta}(B_R)$ for some $\beta\in(0,1)$, and $B_R$ has the same center as $B_{4R}$. We consider now the linear Dirichlet problem
\begin{equation}\label{L}
\begin{cases}
L_m u:=-\mathrm{div}\left(\displaystyle{\sum_{h=1}^m}\alpha_h|\nabla u_m|^{2h-2}\nabla u\right)=0\quad&\mbox{in } B_R,\\
u=u_m&\mbox{on }\partial B_R.
\end{cases}
\end{equation}
Clearly, $u_m$ is a weak solution of \eqref{L}. The boundary datum $u_m$ is continuous on $\partial B_R$, the operator $L_m$ is strictly elliptic in $B_R$ and has coefficients in $C^{0,\beta}(B_R)$. Hence, by \cite[Theorem 6.13]{GT}, \eqref{L} has a unique solution in $C(\bar{B}_R)\cap C^{2,\beta}(B_R)$, whence  $u_m\in C(\bar{B}_R)\cap C^{2,\beta}(B_R)$. We consider again \eqref{L}. Now we know that the coefficients of $L_m$ are of class $C^{1,\beta}(B_R)$ and that $u_m$ is a $C^2$-solution of the equation in \eqref{L}. By \cite[Theorem 6.17]{GT}, $u_m\in C^{3,\beta}(B_R)$. By a bootstrap argument, we obtain $u_m\in C^\infty(B_R)$. By the arbitrariness of $R$ and of the center of the ball $B_R$, $u_m\in C^\infty(\mathbb R^N\setminus\{x_1,\dots,x_n\})$.
\end{proof}

\begin{remark}\label{roleoflapl}
The presence of the Laplacian in the operators sum $\sum_{h=1}^m\alpha_h\Delta_{2h}$ plays an essential role in the proof of the previous result. Indeed, we observe that, among the hypotheses on $F$, \cite[Lemma~1]{Lieberman} requires $F(t)\ge\varepsilon > 0$ for all $t\ge0$, which is satisfied with $\varepsilon=\alpha_1$ thanks to the presence of the Laplacian.
\end{remark}

Next, we study the behavior of the solution $u_m$ of \eqref{Pa} and of its gradient, near the point charges $x_k$'s. 

\begin{proof}[$\bullet$ Proof of Theorem \ref{approx_nabla}] For any $k=1,\dots,n$, fix $R_k>0$ so small that $B_{R_k}(x_k)\cap\{x_1,\dots,x_n\}=\{x_k\}$. Then, $u_m$ solves
\begin{equation}\label{Pmk}
\begin{cases}
-\sum_{h=1}^m\alpha_h\Delta_{2h}u=a_k\delta_{x_k}\quad&\mbox{in }B_{R_k}(x_k),\\
u=u_m&\mbox{on }\partial B_{R_k}(x_k)
\end{cases}
\end{equation}
for all $k=1,\dots,n$. We split the proof into six steps. \smallskip

{\it Step 1: Translation.} For all $\varphi\in C_{\mathrm{c}}^\infty(B_{R_k}(x_k))$
\begin{equation}\label{wsm}
\sum_{h=1}^m\int_{B_{R_k}(x_k)}\alpha_h|\nabla u_m|^{2h-2}\nabla u_m\cdot\nabla \varphi dx=a_k\varphi(x_k).
\end{equation}
So, if we define $u_{m,k}(x):=u_m(x+x_k)-u_m(x_k)$ and $\varphi_k(x):=\varphi(x+x_k)$ for all $x\in B_{R_k}(0)$, we get $u_{m,k}\in C^\infty(B_{R_k}(0)\setminus\{0\})$, $\varphi_k\in C^\infty_\mathrm{c}(B_{R_k}(0))$ and
\begin{equation}\label{star}
\sum_{h=1}^m\int_{B_{R_k}(0)}\alpha_h|\nabla u_{m,k}|^{2h-2}\nabla u_{m,k}\cdot\nabla \varphi_k dx=a_k\varphi_k(0).
\end{equation}
Hence, by the arbitrariness of $\varphi\in C_{\mathrm{c}}^\infty(B_{R_k}(x_k))$, $u_{m,k}$ solves weakly 
\begin{equation}\label{Pmk0}
\begin{cases}
-\sum_{h=1}^m\alpha_h\Delta_{2h}u=a_k\delta_{0}\quad&\mbox{in }B_{R_k}(0),\\
u=u_{m,k}&\mbox{on }\partial B_{R_k}(0).
\end{cases}
\end{equation}  
Of course we have $u_{m, k}(0) = 0$.
\smallskip

{\it Step 2: Potential estimates on $u_{m,k}$.} Consider the operator 
$$-\sum_{h=1}^m\alpha_h\Delta_{2h}u=-\mathrm{div}\left(\frac{g(|\nabla u|)}{|\nabla u|}\nabla u\right),$$
with $g(t):=\sum_{h=1}^m\alpha_h t^{2h-1}$ for all $t\ge 0$, and note that 
$$1\le \frac{g'(t)t}{g(t)}\le 2m-1\quad\mbox{for all }t>0.$$
By \cite[Theorem 1.2]{Baroni}, for every $x_0\in B_{R_k}(0)$ Lebesgue point of $\nabla u_{m,k}$ and for every ball $B_{2R}(x_0)\subset B_{R_k}(0)$
one has
\begin{equation}\label{potentialest}
g(|\nabla u_{m,k}(x_0)|)\le c \mathbb I_1^{|a_k\delta_0|}(x_0,2R)+c g\left(\Mint_{B_R(x_0)}|\nabla u_{m,k}|dx\right) \,,
\end{equation}
where $c=c(N,m)>0$ and
$$\mathbb I_1^{|a_k\delta_0|}(x_0,R):=\int_0^R\frac{|a_k\delta_0|(B_\rho(x_0))}{\rho^N}d\rho$$
is the truncated linear Riesz potential of the measure $|a_k\delta_0|$.
Now, if $0 < |x_0| < R_k - 2R$
\begin{equation}\label{I1}
\mathbb I_1^{|a_k\delta_0|}(x_0,2R)=\int_{|x_0|}^{2R}\frac{|a_k|}{\rho^N}d\rho\le \frac{|a_k|}{(N-1)|x_0|^{N-1}}.
\end{equation}
If furthermore $R>R_k/4$ it follows for almost every $x_0$ that 
\begin{equation}\label{gmedia}
\begin{aligned}g\left(\Mint_{B_R(x_0)}|\nabla u_{m,k}|dx\right)&< \sum_{h=1}^m\alpha_h\left(\frac{\|\nabla u_{m,k}\|_{L^1(B_R(x_0))}}{|B_{R_k/4}|}\right)^{2h-1}\\
&\le\sum_{h=1}^m\alpha_h\left(\frac{\|\nabla u_{m,k}\|_{L^1(B_{R_k}(0))}}{|B_{R_k/4}|}\right)^{2h-1}=: C,\end{aligned}
\end{equation}
where $C=C\left(\|\nabla u_m\|_{L^1(B_{R_k}(x_k))}, N,g\right)>0$ is independent of the specific $x_0$ and $R$ considered. We note that, if $|x_0|<R_k/4$, then \eqref{potentialest}-\eqref{gmedia} hold with any $R\in(R_k/4,3R_k/8)$.
Therefore, by combining \eqref{gmedia}  with \eqref{potentialest} and \eqref{I1}, we obtain for a.e. $x\in B_{R_k/4}(0)$ 
\begin{equation}\label{est_for_nabla_umk}
\begin{aligned}
|\nabla u_{m,k}(x)|&= \left(\frac{g(|\nabla u_{m,k}(x)|)}{\alpha_m}\right)^{\frac1{2m-1}}\\
&\le  \left\{\frac{c}{\alpha_m|x|^{N-1}}\left[\frac{|a_k|}{N-1}+C\left(\frac{R_k}{4}\right)^{N-1}\right]\right\}^{\frac1{2m-1}}=: \frac{C'}{|x|^{\frac{N-1}{2m-1}}},
\end{aligned}
\end{equation}
with $C'=C'(\|\nabla u_m\|_{L^1(B_{R_k}(x_k))},R_k,|a_k|,N,m,g)>0$.
\smallskip

{\it Step 3: Scaling.} Fix two integers $m>\max\{N/2,2^*/2\}$ and $k\in\{1,\dots,n\}$. For any $\varepsilon>0$ and $x\in B_{R_k/\varepsilon}(0)\setminus\{0\}$, define 
$u_\varepsilon(x):=\varepsilon^{\frac{N-2m}{2m-1}}u_{m,k}(\varepsilon x)$. 
Then $u_\varepsilon\in C^\infty(B_{R_k/\varepsilon}(0)\setminus\{0\})$, and $\nabla u_\varepsilon(x) = \varepsilon^{\frac{N-1}{2m-1}}\nabla u_{m,k}(\varepsilon x)$  for all $x\in B_{R_k/\varepsilon}(0)\setminus\{0\}$. By substituting into \eqref{star}, we obtain for any $\varphi\in C_\mathrm{c}^\infty(B_{R_k/\varepsilon}(0))$
$$
\sum_{h=1}^m\int_{B_{R_{k}/\varepsilon}(0)}\varepsilon^{N-2h+\frac{(2m-N)(2h-1)}{2m-1}}\alpha_h|\nabla u_\varepsilon|^{2h-2}\nabla u_\varepsilon\cdot\nabla \varphi dx=a_k\varphi(0),
$$
or in other words $u_\varepsilon$ solves weakly 
\begin{equation}\label{Eeps}
-\sum_{h=1}^m \varepsilon^{N-2h+\frac{(2m-N)(2h-1)}{2m-1}}\alpha_h\Delta_{2h}u=a_k\delta_{0}\quad\mbox{in }B_{R_k/\varepsilon}(0).
\end{equation}  
We note that the exponent of $\varepsilon$ is positive for $h<m$ and is zero for $h=m$. Also note that 
$u_\varepsilon(0) = 0$. \smallskip

{\it Step 4: Limit as $\varepsilon\to0$.} 
In terms of $u_\varepsilon$,  \eqref{est_for_nabla_umk} translates for a.e. $x\in B_{R_k/4\varepsilon}(0)$ to a global estimate
\begin{equation}\label{est_nabla-u-eps}
|\nabla u_\varepsilon(x)|\le C'|x|^{\frac{1-N}{2m-1}}.
\end{equation}
Since $2m > N$,  for fixed $\bar R\in(0,R_k/4\varepsilon)$,  \eqref{est_nabla-u-eps} yields 
\begin{equation}\label{boundedness}
\int_{B_{\bar R}(0)}|\nabla u_\varepsilon|^{2m}dx\le \frac{2m-1}{2m-N}C'^{2m}\omega_{N-1}\bar R^{\frac{2m-N}{2m-1}}=:C'',
\end{equation}
where $C''=C''(\|\nabla u_m\|_{L^1(B_{R_k}(x_k))},|a_k|,N,m,g,\bar R)>0$ independent of $\varepsilon$. 

Next, we obtain local estimates uniform in $\varepsilon$. 
Let $A \subset B_{\bar R}(0)\setminus\{0\}$ be a compact set. 
Then, by \eqref{est_nabla-u-eps} and since $u_\varepsilon(0) = 0$,
\begin{equation}\label{ubd}
|u_\varepsilon(x)|\le \int_0^1|\nabla u_\varepsilon(tx)| |x|dt\le C'\frac{2m-1}{2m-N}\bar R^{\frac{2m-N}{2m-1}}\quad\mbox{for all }x\in A.
\end{equation}
Furthermore, by Proposition~\ref{reg} we have
$$
\begin{aligned}
|\nabla u_\varepsilon(x)-\nabla u_\varepsilon(y)|
&=\varepsilon^{\frac{N-1}{2m-1}} |\nabla u_{m,k}(\varepsilon x)-\nabla u_{m,k}(\varepsilon y)|
\\
&\le \varepsilon^{\frac{N-1}{2m-1}+1 - \beta_m}|x-y|^{1 - \beta_m} \le |x-y|^{1 - \beta_m} \,,
\end{aligned}
$$
for every $x,y\in A$ and $\varepsilon\le 1$. Since, by \eqref{est_nabla-u-eps}, $|\nabla u_\varepsilon|$ is also uniformly bounded in $A$, by the Arzel\`a-Ascoli theorem, there exist a subsequence, still denoted by $(u_\varepsilon)$, and a function $\bar{u} \in C^1(A)$ such that $\lim_{\varepsilon\to0}\nabla u_{\varepsilon} = \nabla \bar{u}$ in the uniform topology on $A$. By choosing $\bar{u}(0) = 0$, we obtain that $u_{\varepsilon} \to \bar{u}$ in $C^1(A)$. 
By \eqref{boundedness} and the Fatou lemma we have that $\|\nabla \bar{u}\|_{L^{2m}(B_{\bar{R}(0)})} \leq (C'')^{1/(2m)}$. Hence, for any $\psi \in [L^{2m}(B_{\bar R}(0))]^N$
\begin{multline*}
\left| \int_{B_{\bar{R}}(0)}  (|\nabla u_\varepsilon|^{2m-2}\nabla u_\varepsilon - |\nabla \bar u|^{2m-2}\nabla \bar u) \psi \, dx \right| \\
\leq 
\int_{A} \big(\big||\nabla u_\varepsilon|^{2m-2}\nabla u_\varepsilon - |\nabla \bar u|^{2m-2}\nabla \bar u\big|\big) |\psi|dx
+ 2\frac{(C')^{2m-1}}{\bar R^{N-1}}  \|\,|\psi|\,\|_{L^1(B_{\bar R}(0)\setminus A)}. 
\end{multline*}
For any $\delta > 0$ we can take $A$ such that  $\|\,|\psi|\,\|_{L^1(B_{\bar R}(0)\setminus A)} \leq \delta$ and for sufficiently small $\varepsilon > 0$
we have, from the uniform convergence of $\nabla u_\varepsilon$ on $A$, that 
\begin{equation*}
\left| \int_{ B_{\bar{R}}(0)}  (|\nabla u_\varepsilon|^{2m-2}\nabla u_\varepsilon - |\nabla \bar u|^{2m-2}\nabla \bar u) \psi \, dx \right| 
\leq C\delta \,
\end{equation*}
for some $C>0$ independent of $\varepsilon$. Since $\delta > 0$ and $\psi\in [L^{2m}(B_{\bar R}(0))]^N$ were arbitrary, we have $|\nabla u_\varepsilon|^{2m-2}\nabla u_\varepsilon\rightharpoonup |\nabla \bar{u}|^{2m-2}\nabla \bar{u}$ 
in $[L^{(2m)'}(B_{\bar R}(0))]^N$.  
Recalling that $u_\varepsilon$ solves weakly \eqref{Eeps}, we have for any $\varphi\in C^\infty_c(B_{\bar R}(0))$ 
$$
\sum_{h=1}^m \int_{B_{\bar R}(0)}\varepsilon^{N-2h+\frac{(2m-N)(2h-1)}{2m-1}}\alpha_h|\nabla u_\varepsilon|^{2h-2}\nabla u_\varepsilon\cdot\nabla \varphi dx=a_k\varphi(0)
$$
and by  passing $\varepsilon \to 0$ and using proved weak convergences,  we obtain  
$$
\int_{B_{\bar R}(0)}\alpha_m|\nabla \bar{u}|^{2m-2}\nabla \bar{u} \cdot\nabla \varphi dx=a_k\varphi(0)\,,
$$
or equivalently $\bar{u}$ is a weak solution of 
\begin{equation}\label{eq-for-bar-u}
-\alpha_m\Delta_{2m}u=a_k\delta_0\quad\mbox{in }B_{\bar R}(0). 
\end{equation}

\smallskip

{\it Step 5: Behavior of $\bar u$ and its gradient near $0$.}
By \eqref{eq-for-bar-u}, we know that $\bar u$ is $2m$-harmonic in $B_{\bar R}(0)\setminus\{0\}$ and  $\bar u(0)=0$. 
As in  Step~2,  \cite[Theorem 1.2]{Baroni} with $g(t):=\alpha_m t^{2m-1}$ yields for a.e. $x\in B_{\bar R/4}(0)$
$$
\begin{aligned}
|\nabla \bar u(x)|&\le \left\{\frac{c}{\alpha_m|x|^{N-1}}\left[\frac{|a_k|}{(N-1)}+\alpha_m\left(\frac{\bar R}{4}\right)^{N-1}\left(\frac{\|\nabla \bar u\|_{L^1(B_{\bar R}(0))}}{|B_{\bar R/4}|}\right)^{2m-1}\right]\right\}^{\frac1{2m-1}}\\
&=: C_0|x|^{\frac{1-N}{2m-1}}\\
|\bar u(x)|&\le \frac{2m-1}{2m-N} C_0|x|^{\frac{2m-N}{2m-1}} \,,
\end{aligned}
$$
where the second bound follows as in \eqref{ubd}. 
Hence, the isotropy result \cite[Remark~1.6]{Veron} (see also work by Serrin \cite{serrin1965singularities}) implies
\begin{equation}\label{baru-sol-fond}
\lim_{x\to0}\frac{\bar u(x)}{\mu(x)}=\gamma\quad\mbox{ and }\quad
\lim_{x\to0}|x|^{\frac{N-1}{2m-1}}\nabla(\bar u-\gamma\mu)=0,
\end{equation}
where $\gamma:=\mathrm{sign}(a_k)\left(\frac{|a_k|}{\alpha_m}\right)^{\frac1{2m-1}}$, and  $\mu(x):=\kappa_m(N)|x|^{\frac{2m-N}{2m-1}}$ with $\kappa_m(N):=-\frac{2m-1}{2m-N}(N|B_1|)^{-\frac1{2m-1}}$ is the fundamental solution of the $-\Delta_{2m}$.
\smallskip

{\it Step 6: Behavior of $u_m$ and its gradient near $x_k$.} Since $|x|^{\frac{N-1}{2m-1}} |\nabla \mu| =  |\kappa_m| \frac{2m-N}{2m-1}$, from \eqref{baru-sol-fond} follows
$$
\lim_{x\to0}|\nabla \bar u(x)| |x|^{\frac{N-1}{2m-1}}=\frac{2m-N}{2m-1}|\gamma\kappa_m|.
$$
Furthermore, by Step 4 we know in particular that $\nabla u_\varepsilon\to\nabla \bar u$ pointwise in $B_{\bar R}(0)\setminus\{0\}$. Hence, 
$$
\lim_{x\to0}\left(\lim_{\varepsilon\to0}|\nabla u_\varepsilon(x)||x|^{\frac{N-1}{2m-1}}\right)=\lim_{x\to0}|\nabla \bar u(x)||x|^{\frac{N-1}{2m-1}}=
\frac{2m-N}{2m-1}|\gamma\kappa_m|
$$
and by the definition of $u_\varepsilon$, 
$$
\frac{2m-N}{2m-1}|\gamma\kappa_m| = 
\lim_{x\to0}\left(\lim_{\varepsilon\to0} \varepsilon^{\frac{N-1}{2m-1}} |\nabla u_{m,k}(\varepsilon x)||x|^{\frac{N-1}{2m-1}}\right)=
\lim_{y \to 0}|\nabla  u_{m, k} (y)||y|^{\frac{N-1}{2m-1}} \,.
$$
Consequently
$$|\nabla u_{m,k}(x)|\sim|\gamma\kappa_m|\frac{2m-N}{2m-1}|x|^{\frac{1-N}{2m-1}}\quad\mbox{as }x\to0,$$
which in turn implies \eqref{grad-growth} with $K_m':= |\gamma\kappa_m| \frac{2m-N}{2m-1}$.
Analogously, by Step 4 we also know that $u_\varepsilon\to \bar u$ pointwise in $B_{\bar R}(0)\setminus\{0\}$. Therefore, by \eqref{baru-sol-fond}
$$
\lim_{x\to0}\left(\lim_{\varepsilon\to0}\frac{u_\varepsilon(x)}{\gamma \kappa_m |x|^{\frac{2m-N}{2m-1}}}\right)=1
$$
which in terms of $u_{m, k}$ gives
$$
\lim_{x\to0}\frac{u_{m,k}(x)}{|x|^{\frac{2m-N}{2m-1}}}=\gamma\kappa_m
$$
and proves \eqref{u-growth} with $K_m:=\gamma\kappa_m$. In particular, 
if $a_k>0$, then $K_m\cdot a_k<0$, and $x_k$ is a relative strict maximizer of $u_m$, while if  $a_k<0$ it is a relative strict minimizer of $u_m$.
\end{proof}

\begin{remark}\label{Kto1} Observe that, since $\alpha_m=\frac{(2m-3)!!}{(2m-2)!!}$, 
$$
\lim_{m\to\infty}|K_m|=\lim_{m\to\infty}\frac{2m-1}{2m-N}\left(\frac{|a_k|}{N|B_1|\alpha_m}\right)^{\frac1{2m-1}}=1.
$$ 
\end{remark}

\section*{Acknowledgments} The authors thank Maria Colombo for a fruitful discussion and for pointing to us the reference \cite{Baroni}.

The authors acknowledge the support of the projects MIS F.4508.14 (FNRS) \& ARC AUWB-2012-12/17-ULB1- IAPAS. 

F. Colasuonno was partially supported by the INdAM - GNAMPA Project 2017 ``Regolarit\`a delle soluzioni viscose per equazioni a derivate parziali non lineari degeneri''.

\bibliographystyle{abbrv}
\bibliography{biblio}

\def\cprime{$'$}
\begin{thebibliography}{10}

\bibitem{Mingione}
E.~Acerbi and G.~Mingione.
\newblock Regularity results for a class of functionals with non-standard
  growth.
\newblock {\em Arch. Ration. Mech. Anal.}, 156(2):121--140, 2001.

\bibitem{Baroni}
P.~Baroni.
\newblock Riesz potential estimates for a general class of quasilinear
  equations.
\newblock {\em Calc. Var. Partial Differential Equations}, 53(3-4):803--846,
  2015.

\bibitem{BartSim}
R.~Bartnik and L.~Simon.
\newblock Spacelike hypersurfaces with prescribed boundary values and mean
  curvature.
\newblock {\em Comm. Math. Phys.}, 87(1):131--152, 1982.

\bibitem{BDP}
D.~Bonheure, P.~D'Avenia, and A.~Pomponio.
\newblock On the electrostatic {B}orn-{I}nfeld equation with extended charges.
\newblock {\em Comm. Math. Phys.}, 346(3):877--906, 2016.

\bibitem{bonheure2012infinitely}
D.~Bonheure, C.~De~Coster, and A.~Derlet.
\newblock Infinitely many radial solutions of a mean curvature equation in
  {L}orentz-{M}inkowski space.
\newblock {\em Rend. Istit. Mat. Univ. Trieste}, 44:259--284, 2012.

\bibitem{BI}
M.~Born and L.~Infeld.
\newblock Foundations of the new field theory.
\newblock {\em Proc. R. Soc. Lond. Ser. A}, 144(852):425--451, 1934.

\bibitem{brezis1971equivalence}
H.~Brezis and M.~Sibony.
\newblock {\'E}quivalence de deux in{\'e}quations variationnelles et
  applications.
\newblock {\em Arch. Ration. Mech. Anal.}, 41(4):254--265, 1971.

\bibitem{caffarelli1979free}
L.~A. Caffarelli and A.~Friedman.
\newblock The free boundary for elastic-plastic torsion problems.
\newblock {\em Trans. Amer. Math. Soc.}, 252:65--97, 1979.

\bibitem{Cellina}
A.~Cellina.
\newblock On the regularity of solutions to the plastoelasticity problem.
\newblock {\em preprint}, pages 1--13, 2017.

\bibitem{ColSq}
F.~Colasuonno and M.~Squassina.
\newblock Eigenvalues for double phase variational integrals.
\newblock {\em Ann. Mat. Pura Appl. (4)}, 195(6):1917--1959, 2016.

\bibitem{cupini2014existence}
G.~Cupini, P.~Marcellini, and E.~Mascolo.
\newblock Existence and regularity for elliptic equations under $ p, q
  $-growth.
\newblock {\em Adv. Differential Equations}, 19(7-8):693--724, 2014.

\bibitem{Ecker}
K.~Ecker.
\newblock Area maximizing hypersurfaces in {M}inkowski space having an isolated
  singularity.
\newblock {\em Manuscripta Math.}, 56(4):375--397, 1986.

\bibitem{Feynman}
R.~P. Feynman, R.~B. Leighton, and M.~Sands.
\newblock {\em The {F}eynman lectures on physics. {V}ol. 2: {M}ainly
  electromagnetism and matter}.
\newblock Addison-Wesley Publishing Co., Inc., Reading, Mass.-London, 1964.

\bibitem{fortunato2002born}
D.~Fortunato, L.~Orsina, and L.~Pisani.
\newblock Born-{I}nfeld type equations for electrostatic fields.
\newblock {\em J. Math. Phys.}, 43(11):5698--5706, 2002.

\bibitem{GT}
D.~Gilbarg and N.~S. Trudinger.
\newblock {\em Elliptic partial differential equations of second order}.
\newblock Classics in Mathematics. Springer-Verlag, Berlin, 2001.
\newblock Reprint of the 1998 edition.

\bibitem{Veron}
S.~Kichenassamy and L.~V{\'e}ron.
\newblock Singular solutions of the $p$-laplace equation.
\newblock {\em Math. Ann.}, 275(4):599--615, 1986.

\bibitem{Kiessling}
M.~K.-H. Kiessling.
\newblock On the quasi-linear elliptic pde $-\nabla\cdot(\nabla
  u/\sqrt{1-|\nabla u|^2})= 4\pi\sum_k a_k\delta_{s_k}$ in physics and
  geometry.
\newblock {\em Comm. Math. Phys.}, 314(2):509--523, 2012.

\bibitem{liebanalysis}
E.~H. Lieb and M.~Loss.
\newblock {\em Analysis}, volume~14 of {\em Graduate Studies in Mathematics}.
\newblock American Mathematical Society, Providence, RI, second edition, 2001.

\bibitem{Lieberman}
G.~M. Lieberman.
\newblock Boundary regularity for solutions of degenerate elliptic equations.
\newblock {\em Nonlinear Anal.}, 12(11):1203--1219, 1988.

\bibitem{martinez2008minimum}
S.~Mart{\'\i}nez and N.~Wolanski.
\newblock A minimum problem with free boundary in {O}rlicz spaces.
\newblock {\em Adv. Math.}, 218(6):1914--1971, 2008.

\bibitem{Mihailescu}
M.~Mih{\u{a}}ilescu.
\newblock Classification of isolated singularities for nonhomogeneous operators
  in divergence form.
\newblock {\em J. Funct. Anal.}, 268(8):2336--2355, 2015.

\bibitem{serrin1965singularities}
J.~Serrin.
\newblock Singularities of solutions of nonlinear equations.
\newblock In {\em Proc. Symp. App. Math}, volume~17, pages 68--88, 1965.

\bibitem{Szulkin}
A.~Szulkin.
\newblock Minimax principles for lower semicontinuous functions and
  applications to nonlinear boundary value problems.
\newblock {\em Ann. Inst. H. Poincar\'e Anal. Non Lin\'eaire}, 3(2):77--109,
  1986.

\bibitem{treu2000equivalence}
G.~Treu and M.~Vornicescu.
\newblock On the equivalence of two variational problems.
\newblock {\em Calc. Var. Partial Differential Equations}, 11(3):307--319,
  2000.

\end{thebibliography}
 
\end{document}